\documentclass[a4paper,12pt]{article}
\usepackage{times, url}
\textheight 24.5cm
\textwidth 16.3cm
\oddsidemargin 0.in
\evensidemargin 0.in
\topmargin -1.8cm

\usepackage{amsmath,amssymb,amsthm,amsfonts}

\usepackage{cite}

\usepackage[utf8]{inputenc}
\usepackage{lineno,hyperref}
\modulolinenumbers[5]
\bibliographystyle{elsarticle-num}
\usepackage{graphicx}
\usepackage{pst-all}
\usepackage{pstricks-add}

\usepackage{pst-pdf}
\usepackage[T1]{fontenc}
\usepackage{amsmath,amssymb,latexsym}
\usepackage{amsthm}
\usepackage[center,small,sf]{caption}
\usepackage{mathtools}
\usepackage{diagbox}

\newtheorem{defin}{Definition}
\newtheorem{teor}{Theorem}
\newtheorem{prop}{Proposition}
\newtheorem{corol}{Corollary}

\renewcommand{\equiv}{\cong}
\setlength{\parindent}{0pt}


\begin{document}
\setcounter{page}{1} 
\vspace{10mm}

\begin{center}
{\LARGE \bf  Prime numbers. An alternative study using ova-angular rotations}
\vspace{8mm}

{\Large \bf Yeisson Alexis Acevedo Agudelo$^1$}
\vspace{3mm}

$^1$ Science Department \\
EAFIT University, Colombia \\
ORCID ID: https://orcid.org/0000-0002-1640-9084\\
e-mail: \url{yaceved2@eafit.edu.co}

\end{center}
\vspace{10mm}

\noindent
{\bf Abstract:} Ova-angular rotations of a prime number are characterized, constructed using the Dirichlet theorem. The geometric properties arising from this theory are analyzed and some applications are presented, including Goldbach's conjecture, the existence of infinite primes of the form $\rho = k^2+1$ and the convergence of the sum of the inverses of the Mersenne's primes. Although the mathematics that is
used is quite elementary,  we can notice the usefulness of this theory based on geometric properties. In this paper, the study ends by introducing the ova-angular square matrix. \\

{\bf Keywords:} Prime number, Ova-angular rotation, Geometric properties, Dirichlet's theorem.\\

{\bf Mathematics Subject Classification:} 11AXX, 11A07, 11A67.
\vspace{5mm}\\
\footnotesize \textit{There is a normal factorization for a $ 5-smooth$ number whose existence is beautiful: the bases are the first three prime numbers and the exponents are the first natural numbers, observe its beauty: $2^{3}$. $3^{2}$. $5^{1}= 360$\, (Author).}
\vspace{5mm}

\section{Introduction} 

From functional analysis and number theory, direct contributions towards the study of prime numbers have been provided \cite{Vatwani2017, Murty2017, Avigad2014, Chen2013, Tao2013, Matomaki2017}. These contributions have linked the use of multiplicative functions, bounded sets, infimum, and supremum, as well as the importance of the norm and the modular congruences to establish differences or limit distances between consecutive primes, obtaining interesting results such as by Zhang, who established an important dimension for the distance of consecutive primes \cite{Tianshu2013, Breitzman1970}.\\

It is worth noting that Dirichlet ($1823$), a Gauss's successor at the University of Göttingen, long investigated the prime numbers, linking the analysis and the theory of congruences on the distribution of these. Thus, addressing integer modules, quantities of prime numbers, and arithmetic progressions, he establishes his theorem "\textit{About primes in arithmetic progressions}" \cite{book:KennethH.Rosen}. With his theorem, Dirichlet presents the possibility of cla\-ssifying the prime numbers according to their residue modulo $n$, with integer $n$. \\

In this work, a further interpretation of this theorem is developed and it is established that among all the possible integer modules that can be established to generate progressions with infinite prime numbers, the one that presents more stability is the $360$ module. Exactly this development is induced at a theoretical level under the denomination ova-angular rotations. \\

It is estimated that with the study method presented below, the idea that primes in a similar way to energy-light (a concept of Physics) are quantized in rotational packages and their behavior is not far from physical interpretations can be supported, being this one the following: \\

\emph {The prime numbers are distributed in ova-angular packages of $360^{\circ}$ of rotation, in other
words, are quantized. Establishing a somewhat abrupt comparison, the rotating disk would be an $<$atom$>$ with a nucleus at its center and its somewhat restless $<$electrons$>$ would be compared with the residues $\daleth_{\rho}$, which in interaction with the natural numbers in the direction axis would receive a kind of $<$energy$>$ capable enough for them to radiate a $<$photon$>$ comparable to a prime number.} \\

Adding the elements that provide ova-angular rotations and their applications within the field of mathematics could bring closer to reality the idea that both physics and mathematics are closely related and there is a constitutional relationship between these knowledge sciences \cite{book:AcevedoY, book:SergeyevY}.

\section{
Introduction to Ova-angular Rotations of a Prime Number}

Consider for convenience a radius $r$, such that $r=\frac{180}{\pi} \approx 57.2958$ units and consider the circle $x^2+\left (y-\frac{180}{\pi} \right) ^2=r^2$. It is clear that this circle has a perimeter $L=2{\pi}r=360$ units. Take as a convention, for the beginning of this analysis, that such circle is located on the number line at the point $(0,0)$ as shown in Figure \ref{Figura: $1$}.
\begin{figure}[ht]
\begin{center}
\includegraphics[width=9cm, height=6.5cm]{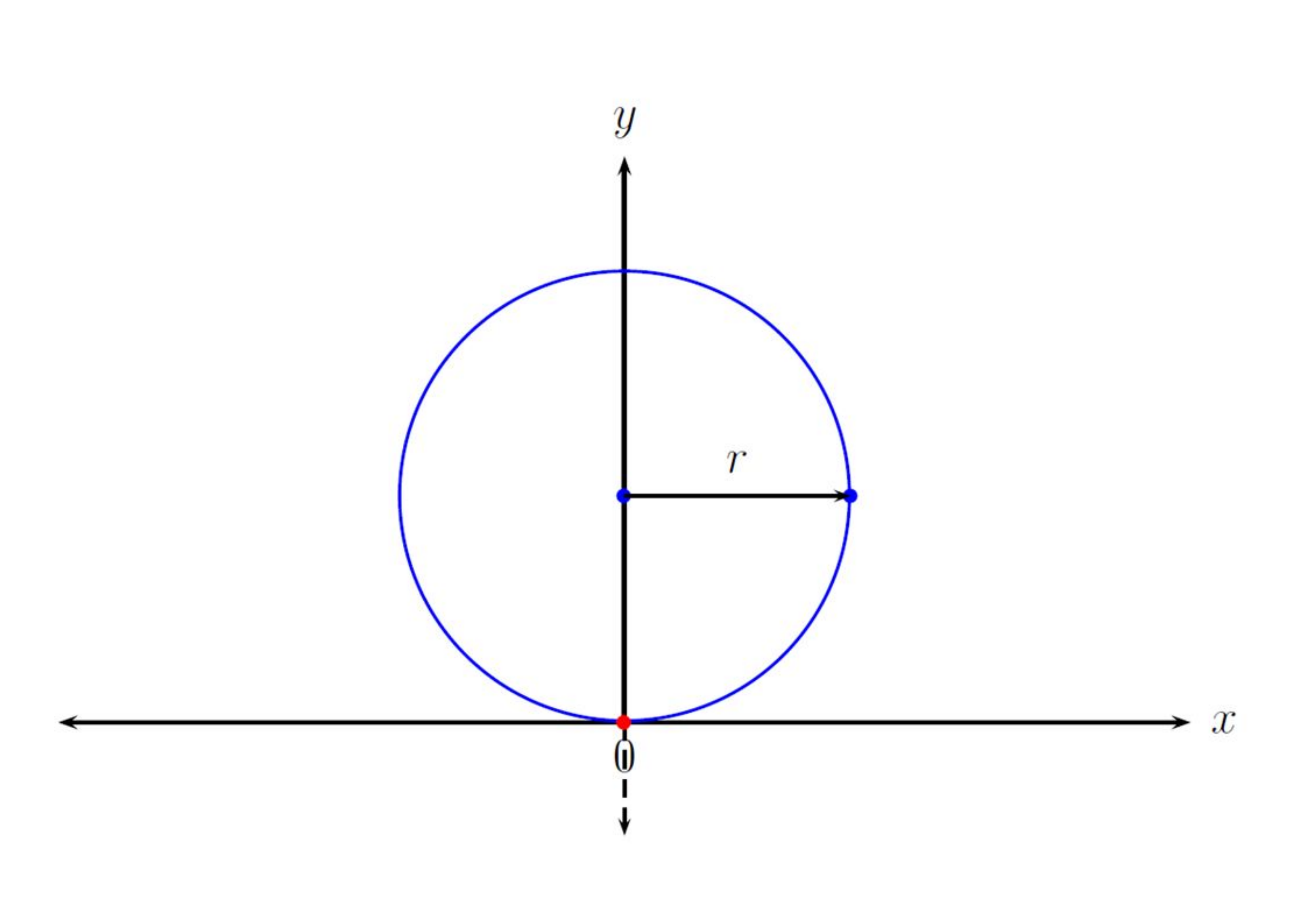}
\caption{Ova-angular rotation circle.}\label{Figura: $1$}
\end{center}
\end{figure}
Finally, consider that the circle has a constant speed $v= \frac{1\,\text{unit}}{\text{second}}$ towards the positive axis $+x$.\\

Thus, we have that for every second $t$ elapsed the circle travels $1$ unit of arc segment with respect to its
circumference and $1$ unit of the segment with respect to the coordinate axis $x$. Then all $n\in \mathbb{N}$ will be associated with a respective arc segment and an angle corresponding to that arc segment. The Figure \ref{Figura: $2$} presents, as a way of example, the position and angle traveled by the circle at $t=5$, positioning itself on the natural $n=5$.\\
\begin{figure}[ht]
\begin{center}
\includegraphics[width=10cm, height=7cm]{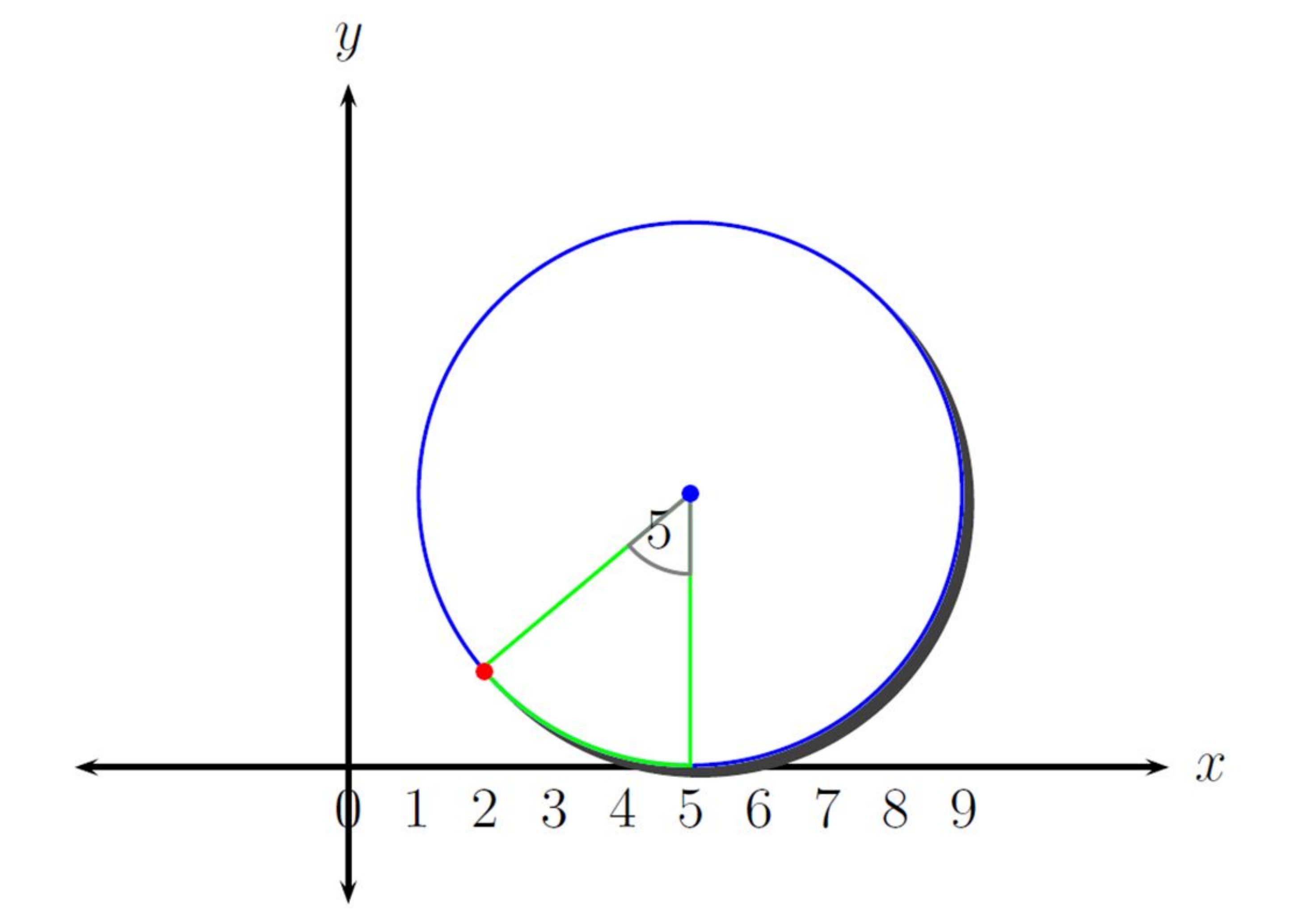}
\caption{Moving circle.}\label{Figura: $2$}
\end{center}
\end{figure}

 \textbf{\emph{Note}.} It is clear that if $a,x \in \mathbb {Z}$ the equation $a \equiv x \pmod {360}$ has infinite congruent solutions for $x$. Now if $a$ is a prime number, then there is only one congruent solution $x$, being $x$ the remainder that leaves $a$ when it is divided by $360$.
$$x=min\,\{x \in \mathbb{N}: a\equiv x \pmod{360}\,;\,\,0\leq x<360\}.$$

\begin{defin}[Ova-angular residue of $\rho$]\label{def1}
Let $\mathbb{P}$ be the set of prime numbers and $\rho \in \mathbb{P}$. The solution for $x$ of the equation $\rho \equiv x \pmod{360}$
it will be called \emph{Ova-angular of $\rho$} and will be denoted by $\daleth_\rho=x$ such that:
$$\daleth_\rho = \rho-360 \left \lfloor \frac{\rho}{360} \right \rfloor,$$
equivalently
$$\rho \equiv \daleth_{\rho} \pmod{360}.$$
\end{defin}

Notice the equivalence between the angle and the residue. Figure \ref{Figura: $3$} provides two examples when the circle is positioned on the prime numbers $359$ and $367$, respectively.

\begin{figure}[ht]
\begin{center}
\includegraphics[width=14cm, height=9cm]{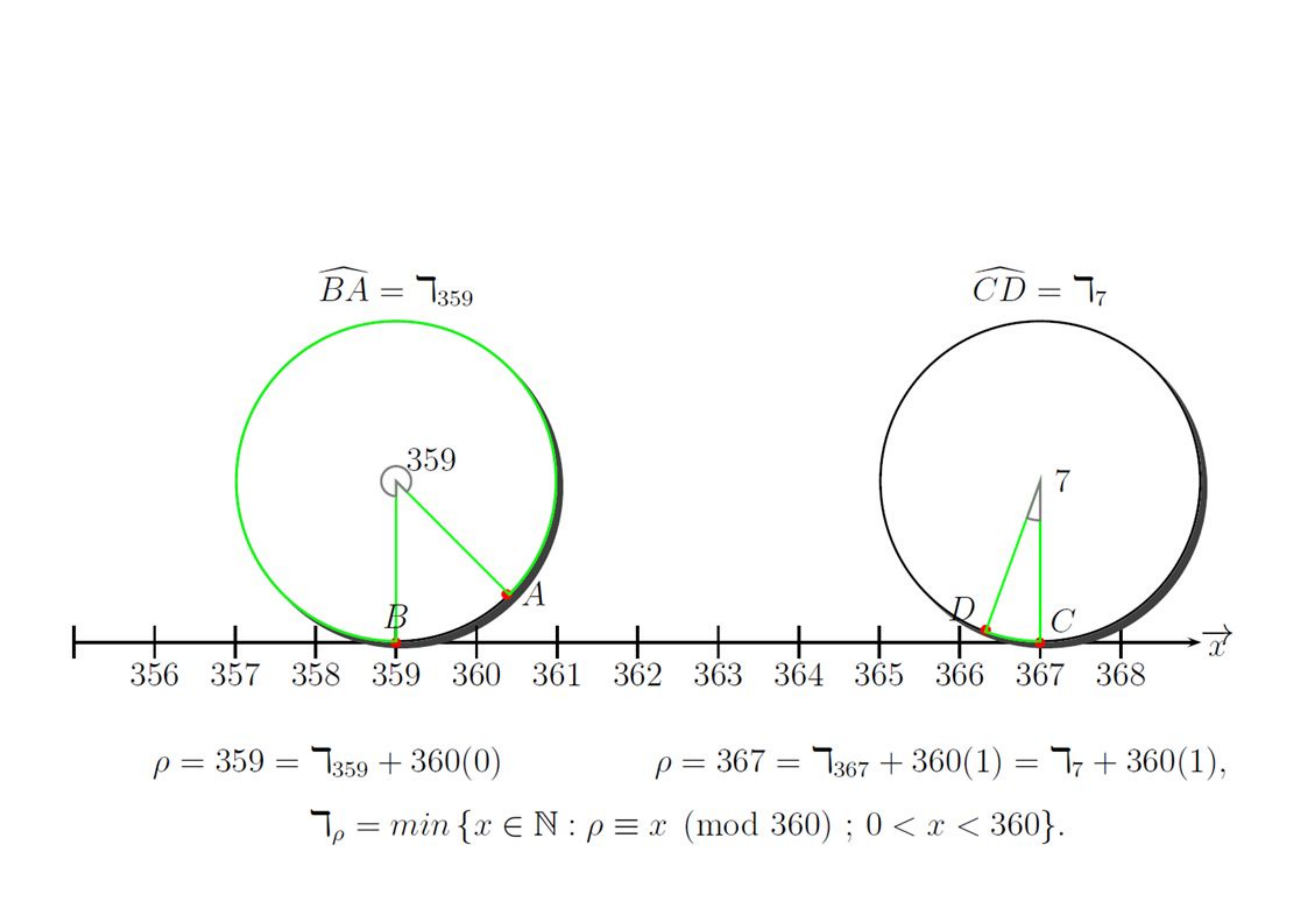}
\caption{Examples ova-angular rotation.}\label{Figura: $3$}
\end{center}
\end{figure}

\begin{defin}[Perfect Ova-angular set]
$\daleth_\rho$ is said to be an element of set A called
Perfect Ova-angular set, if $\daleth_\rho\in \mathbb{P}$.
$$A=\{\daleth_\rho\in \mathbb{N}\diagup \,\, \rho \equiv \daleth_{\rho}\pmod{360}\,\wedge \,\,
\daleth_{\rho}\,\,\text{is a  prime number};\, 0<\daleth_\rho<360\},$$
\end{defin}

$A=\{2,3,5,7,11,13,17,19,23,29,31,37,41,43,47,53,59,61,67,71,73,79,83,89,97,101,\\
103,107,109,113,127,131,137,139,149,151,157,163,167,173,179,181,191,193,197,199,\\
211,223,227,229,233,239,241,251,257,263,269,271,277,281,283,293,307,311,313,317,\\
331,337,347,349,353,359 \},$
$$|A|=card(A)=72.$$

\begin{defin}[Ova-angular set generator]
$\daleth_\rho$ is said to be an element of set B called Ova-angular set generator, if $\daleth_\rho \notin \mathbb{P}$.
$$B=\{\daleth_\rho\in \mathbb{N}\diagup \,\, \rho \equiv \daleth_{\rho}\pmod{360}\,\wedge \,\,
\daleth_{\rho}\,\,\text{is not a prime number};\, 0<\daleth_\rho<360\}.$$
\end{defin}

$B=\{1,49,77,91,119,121,133,143,161,169,187,203,209,217,221,247,253,259,287,289,\\
299,301,319,323,329,341,343\}.$
$$|B|=card(B)=27.$$

It is clear that the sets $A$ and $B$ are disjoint. It is striking that their cardinals are permutation
of their digits.

\begin{defin}[Residual set $\rho \pmod{360}$]
Let $C^{*}$ be the set formed by all the residuals left by a prime number when divided by $360$, i.e.,
$$C^{*}=\{\daleth_\rho\in\mathbb{N} \diagup \,\, \daleth_\rho=\rho-360k\,;\,
\text{for some}\,\, k\in\mathbb{Z};\,\, 0<\daleth_\rho<360\}.$$
\end{defin}

It is clear that $C^{*}=A \bigcup {B}$ and that $C^{*}$ is a complete set of residuals of $\mathbb{P}_{360}.$

\begin{teor}[$Card(C^{*})=99$]
Every prime number $\rho$, when divided by $360$, leaves a positive integer residue $\daleth_\rho$ belonging to a set $C^{*}$ of only $99$ elements.
\end{teor}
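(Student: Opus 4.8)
The plan is to pin down $C^{*}$ exactly and then count it with Euler's totient function. The first and central step is the dichotomy: for $r\in\mathbb{N}$ with $0<r<360$ one has $r\in C^{*}$ if and only if either $\gcd(r,360)=1$ or $r\in\{2,3,5\}$. To prove the forward direction, suppose $r=\daleth_\rho$ for some $\rho\in\mathbb{P}$, so $\rho=r+360k$ for an integer $k\ge 0$, and assume $d:=\gcd(r,360)>1$. Choose a prime $p\mid d$; then $p\mid r$ and $p\mid 360$, hence $p\mid r+360k=\rho$, and primality of $\rho$ forces $\rho=p$. Thus $\rho$ divides $360$, so $\rho\in\{2,3,5\}$, and since $\rho<360$ we get $\daleth_\rho=\rho$, i.e. $r\in\{2,3,5\}$. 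I expect this to be the only genuinely delicate point of the argument: a residue class modulo $360$ whose gcd with $360$ exceeds $1$ nevertheless contains one prime exactly when the residue is itself one of the primes $2,3,5$ dividing $360$, and these three classes must not be discarded along with the ``useless'' ones such as $r=4,6,9,\dots$.

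For the converse I would argue as follows. The classes $r\in\{2,3,5\}$ clearly meet $\mathbb{P}$: take $\rho=r$, whence $\daleth_\rho=r$, so $2,3,5\in C^{*}$. If instead $0<r<360$ and $\gcd(r,360)=1$, then Dirichlet's theorem on primes in arithmetic progressions yields infinitely many primes $\rho\equiv r\pmod{360}$; picking any one of them gives $\daleth_\rho=r$, so $r\in C^{*}$. Combining both directions,
$$C^{*}=\{\,r\in\mathbb{N}\diagup\ 0<r<360,\ \gcd(r,360)=1\,\}\ \cup\ \{2,3,5\}.$$

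It remains to count. The first set in the union has $\varphi(360)$ elements, and since $360=2^{3}\cdot 3^{2}\cdot 5$,
$$\varphi(360)=360\left(1-\tfrac{1}{2}\right)\left(1-\tfrac{1}{3}\right)\left(1-\tfrac{1}{5}\right)=96.$$
The set $\{2,3,5\}$ is disjoint from it (each of $2,3,5$ divides $360$), contributing $3$ further elements, so $\operatorname{Card}(C^{*})=96+3=99$. As a cross-check, this decomposition matches the sets $A$ and $B$ of the excerpt: $A$ is the set of all $72$ primes below $360$ (including $2,3,5$, even though these are not coprime to $360$), while $B$ consists of the $96-69=27$ residues coprime to $360$ that fail to be prime (with $1\in B$), and indeed $|A|+|B|=72+27=99$. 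Once the dichotomy above is in place the count is immediate, so the whole difficulty is concentrated in the correct handling of the non-coprime residue classes.
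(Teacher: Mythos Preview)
Your argument is correct. The dichotomy $C^{*}=\{r:0<r<360,\ \gcd(r,360)=1\}\cup\{2,3,5\}$ is established cleanly in both directions, the invocation of Dirichlet is appropriate for the converse, and the totient computation $\varphi(360)+3=96+3=99$ is right.

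That said, your route differs from the paper's. The paper does not argue via the totient at all: having already \emph{explicitly listed} the sets $A$ (the $72$ prime residues) and $B$ (the $27$ non-prime residues) just before the theorem, its proof is the one-liner $|C^{*}|=|A|+|B|=72+27=99$, with only a passing remark that one could instead use Dirichlet or modular-congruence arguments. What the paper's approach buys is brevity, at the cost of relying on an unverified enumeration; what your approach buys is a self-contained, structural explanation of \emph{why} the count is $99$ (namely $\varphi(360)$ coprime classes plus the three exceptional primes dividing $360$), and it would work unchanged for any modulus $m$ to give $\varphi(m)+\omega(m)$, where $\omega(m)$ is the number of distinct prime divisors of $m$. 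Your closing cross-check against $|A|$ and $|B|$ nicely reconciles the two viewpoints.
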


\begin{proof}
There are several long and short ways to prove this using Dirichlet theorem or the properties of modular congruence. However it is easier to observe that $|C^{*}|=|A|+|B|$.
\end{proof}

\begin{defin}[$\rho$ frecuency rotation]\label{Def:Frecuencia}
If $\rho \in \mathbb{P}$, then its frequency of rotation, denoted $\Game_\rho$, is given by the integer part of $\rho$ when divided by $360$:
$$\Game_\rho = \left \lfloor \frac{\rho}{360} \right \rfloor.$$
\end{defin}

From Definitons \ref{def1} - \ref{Def:Frecuencia} 
it is true that \begin{equation}\label{ecprincipal}
\forall\,\rho\,\in\mathbb{P}\,\,\,\,, \,\,\,\rho=\daleth_\rho+360(\Game_\rho).  
\end{equation}

Thus, we have that for every $\Game_\rho \in \mathbb{N}$, there are $\rho_{n, m, s...}$ and $\daleth_{\rho_{n, m, s, ...}}$, respectively,
such that equation \eqref{ecprincipal} forms a prime number. Some examples: $1129=49+360(3)$, $3733=133+360(10)$,  $9161=161+360(25)$.\\

\begin{teor}[Ova-angular digits $1$]
In every prime number $\rho$, the last digit of the units corresponds to the last digit of the units of its Ova-angular $\daleth\rho$ respectively.
\end{teor}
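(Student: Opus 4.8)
The plan is to deduce the statement immediately from the fundamental identity \eqref{ecprincipal}, namely $\rho = \daleth_\rho + 360(\Game_\rho)$, by reducing it modulo $10$. First I would observe that $360 = 36\cdot 10$, so that $360(\Game_\rho)$ is a multiple of $10$ for every $\rho\in\mathbb{P}$; hence $\rho \equiv \daleth_\rho \pmod{10}$.

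Next I would recall that the units digit of any nonnegative integer $m$ written in base ten is exactly its residue modulo $10$, i.e. $m - 10\left\lfloor \frac{m}{10} \right\rfloor$. Applying this description to both $\rho$ and $\daleth_\rho$ and invoking the congruence just obtained, the two residues coincide, so $\rho$ and its ova-angular $\daleth_\rho$ have the same last digit, which is the assertion.

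The only point requiring care is purely bookkeeping: making explicit that $10 \mid 360$, which is immediate, so there is no genuine obstacle here. I would close with the remark that the same one-line argument shows more generally that $\rho \equiv \daleth_\rho \pmod{d}$ for every divisor $d$ of $360$ (so in particular modulo $2,3,4,5,6,8,9,12,\dots$), the present theorem being just the case $d = 10$; and that the possibility $\daleth_\rho$ has fewer decimal digits than $\rho$ (for instance when $\daleth_\rho \in \{2,3,5,7\}$) is irrelevant, since only the units digit is being compared.
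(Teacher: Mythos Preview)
Your proof is correct and follows essentially the same approach as the paper: both start from the identity $\rho = \daleth_\rho + 360(\Game_\rho)$ and observe that $360(\Game_\rho)$ contributes $0$ to the units digit (the paper phrases this as ``the digit $0$ of $360$ guarantees that $360(\Game_\rho)$ will always be $0$ with respect to the units'', while you phrase it as $10\mid 360$ and reduce modulo $10$). Your added remark about the generalization to arbitrary divisors $d\mid 360$ is a nice touch not present in the paper.
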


\begin{proof}
Let $\rho=\daleth_{\rho}+360(\Game_\rho)$ be a prime number. Note that regardless of the rotation frequency $\Game_\rho$, the  digit $0$ of $360$ guarantees that $360(\Game_\rho)$
it will always be $0$ with respect to the units, then in the sum with $\daleth_ {\rho}$ the last digit will always be the units respective of $\daleth_{\rho}$.
\end{proof}

For example: $\rho=1619$, $\daleth_\rho=179$, $\Game_\rho=4$,
\,\,note that the units digit is common for both $\rho$ and $ \daleth_\rho$. \\

\begin{teor}[Ova-angular digits $2$]\label{Teor3} If the rotation frequency $\Game_\rho$ of $\rho$ is a multiple of $5$, then the last two digits of $\rho$ correspond to those of $\daleth_\rho$ respectively.
\end{teor}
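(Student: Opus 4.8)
The plan is to start from the fundamental decomposition \eqref{ecprincipal}, namely $\rho=\daleth_\rho+360(\Game_\rho)$, and simply track divisibility by $100$ instead of by $10$ (the latter being what gave Theorem (Ova-angular digits $1$)). Observing that the last two digits of an integer are exactly its residue modulo $100$, the statement to prove is precisely $\rho\equiv\daleth_\rho\pmod{100}$ under the hypothesis $5\mid\Game_\rho$.

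First I would write $\Game_\rho=5k$ for some $k\in\mathbb{N}$, which is permitted by the hypothesis. Substituting into \eqref{ecprincipal} gives $\rho=\daleth_\rho+360\cdot 5k=\daleth_\rho+1800k$. Since $1800=100\cdot 18$, the term $1800k$ is a multiple of $100$, hence $1800k\equiv 0\pmod{100}$. Therefore $\rho\equiv\daleth_\rho\pmod{100}$, i.e. $\rho$ and $\daleth_\rho$ leave the same remainder on division by $100$, which means their last two digits (tens and units) coincide. One should also note $0<\daleth_\rho<360$, so $\daleth_\rho$ genuinely has at most three digits and the phrase ``last two digits of $\daleth_\rho$'' is unambiguous; if $\daleth_\rho<100$ one reads its tens digit as $0$ when needed.

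There is essentially no obstacle here: the whole content is the arithmetic fact that $360\cdot 5$ ends in two zeros, so multiplying the frequency by $5$ upgrades the agreement from one digit (Theorem Ova-angular digits $1$) to two digits. The only point worth stating carefully is the clean translation between ``last two digits'' and ``residue mod $100$,'' after which the congruence $\rho=\daleth_\rho+1800k\equiv\daleth_\rho\pmod{100}$ finishes the argument. I would close by remarking, via the examples pattern used elsewhere in the paper, that e.g. a prime with $\Game_\rho$ a multiple of $5$ indeed matches $\daleth_\rho$ in its final two digits, illustrating the claim.
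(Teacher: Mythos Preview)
Your proof is correct and follows essentially the same approach as the paper: write $\Game_\rho=5t$, substitute into $\rho=\daleth_\rho+360\Game_\rho$ to obtain $\rho=\daleth_\rho+1800t$, and observe that $1800t$ ends in $00$ so the tens and units digits of $\rho$ and $\daleth_\rho$ agree. Your version is slightly more explicit in phrasing ``last two digits'' as congruence modulo $100$, but the argument is the same.
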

\begin{proof} Let $\rho=\daleth_{\rho}+360(\Game_\rho)$ be a prime number with $\Game_{\rho}=5t$ and $t\in\mathbb{N}$. It is clear that $1800t$ always ends in $00$. It is concluded that the digits corresponding to the respective tens and ones of $\rho$ and $\daleth_\rho$ are equal.
\end{proof}

For example: $\rho=7537$, $\daleth_\rho=337$, $\Game_\rho=20$.

\begin{corol}[Ova-angulares digits 3] If the rotation frequency $\Game_\rho$ of $\rho$ is a multiple of $25$ then the last three digits of $\rho$ will be $\daleth_\rho$ directly.
\end{corol}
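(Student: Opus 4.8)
The plan is to run the argument of Theorem \ref{Teor3} one decimal place further. Starting from the master identity \eqref{ecprincipal}, $\rho = \daleth_\rho + 360(\Game_\rho)$, I would substitute the hypothesis $\Game_\rho = 25t$ with $t \in \mathbb{N}$ and compute $360 \cdot 25 = 9000$, so that $\rho = \daleth_\rho + 9000t = \daleth_\rho + 1000(9t)$.

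The next step is to observe that $1000(9t)$ is a multiple of $1000$ and therefore ends in the three digits $000$; equivalently $\rho \equiv \daleth_\rho \pmod{1000}$. Since by Definition \ref{def1} we have $0 < \daleth_\rho < 360 < 1000$, the residue $\daleth_\rho$ occupies at most three decimal places, so reading off the last three digits of $\rho$ (inserting leading zeros when $\daleth_\rho < 100$) returns exactly $\daleth_\rho$. This is precisely the assertion that the last three digits of $\rho$ are $\daleth_\rho$ ``directly''.

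Finally I would note that the corollary is both consistent with and strictly sharper than Theorem \ref{Teor3}: every multiple of $25$ is a multiple of $5$, so whenever the corollary applies the earlier theorem applies as well, and the present conclusion additionally fixes the hundreds digit of $\rho$.

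There is essentially no obstacle here. The single point that deserves a word of care is the digit-count bookkeeping: it is the bound $\daleth_\rho < 1000$ (a fortiori $\daleth_\rho < 360$) that upgrades the plain congruence statement ``the last three digits of $\rho$ agree with those of $\daleth_\rho$'' to the stronger phrasing ``the last three digits of $\rho$ \emph{are} $\daleth_\rho$'', so I would make sure this is stated explicitly rather than left implicit.
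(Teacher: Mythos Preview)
Your argument is correct and is exactly the analogue of the proof of Theorem~\ref{Teor3} that the paper intends: the paper's own proof consists solely of the remark that ``the proof is analogous to the previous theorem.'' If anything, you have been more careful than the paper by explicitly noting the bound $\daleth_\rho<360<1000$ that justifies the word ``directly.''
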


\begin{proof}
The proof is analogous to the previous theorem.
\end{proof}

For example: $\rho=36299$, $\daleth_\rho=299$, $\Game_\rho=100$.

\section{Geometry and Applications of the Ova-angular Rotation Circle.}

\textbf{Note $\textbf{1}$.} In basic number theory, there is a theorem which establishes that for all $n \in \mathbb{N}$, there are $n$ consecutive integers and compounds. It should be clarified that if the ova-angular rotation circle is positioned in any integer belonging to the interval $[(n + 1)! + 2, (n + 1)! + n + 1]$ it has not a prime number, therefore it will not be an object of interest, it will only rotate. This suggests that:
\begin{teor}\label{teor intervalos1}
Let $I_n\subset \mathbb{N}$ be intervals of $n$ elements, where there is definitely no prime numbers.
$$I_n=[ (n+1)!+2,(n+1)!+n+1] \,\,\text{for all}\,\, n\in \mathbb{N}.$$
\end{teor}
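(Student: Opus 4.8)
The plan is to show that every element of the interval $I_n=[(n+1)!+2,(n+1)!+n+1]$ is composite, which immediately gives $n$ consecutive composite integers and hence no primes in $I_n$. First I would write a generic element of $I_n$ as $(n+1)!+j$ where $j$ ranges over the integers $2,3,\dots,n+1$. The key observation is that for each such $j$ we have $2\le j\le n+1$, so $j$ divides $(n+1)!$ because $j$ appears as one of the factors in the product $(n+1)!=1\cdot 2\cdots(n+1)$. Consequently $j$ divides the sum $(n+1)!+j$, and since $j\ge 2$ and $(n+1)!+j>j$, this exhibits a nontrivial divisor, so $(n+1)!+j$ is composite.

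The steps in order would be: (1) fix $n\in\mathbb{N}$ and let $m$ be an arbitrary integer in $I_n$, writing $m=(n+1)!+j$ with $2\le j\le n+1$; (2) note $j\mid(n+1)!$ since $j\in\{2,\dots,n+1\}$; (3) conclude $j\mid m$; (4) check the inequalities $1<j<m$ to confirm the divisor is proper, so $m$ is not prime; (5) observe that $I_n$ contains exactly the $n$ integers corresponding to $j=2,\dots,n+1$, so we have produced a run of $n$ consecutive composites with no primes among them. One should also remark, as the note preceding the theorem already does, that the ova-angular rotation circle positioned on any such $m$ simply rotates without emitting a prime, which is the geometric reading of the statement.

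I do not expect any serious obstacle here; the argument is the classical factorial-gap construction and is entirely elementary. The only point requiring a word of care is the divisibility claim $j\mid(n+1)!$: this is immediate once one writes out $(n+1)!$ as the product of all integers from $1$ to $n+1$ and notes that $j$ is one of these factors, but it is worth stating explicitly so the reader sees why the bound $j\le n+1$ (rather than, say, $j\le n$) is the right one and why the interval is indexed from $(n+1)!+2$. A secondary minor point is confirming that the interval genuinely has $n$ elements, namely the integers from $(n+1)!+2$ to $(n+1)!+(n+1)$ inclusive, which is $n$ values; this matches the claim that $I_n$ has $n$ elements.
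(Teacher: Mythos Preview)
Your proposal is correct and follows exactly the classical factorial-gap argument; the paper's own proof is the one-line assertion that $(n+1)!+2,\dots,(n+1)!+n+1$ are all composite, so you are simply supplying the details the paper omits rather than taking a different route.
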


\begin{proof}
For all $n\in \mathbb{N}$ is true that $(n+1)!+2,(n+1)!+3,..., (n+1)!+n+1$ are compound numbers.
\end{proof}
At these intervals, the ova-angular rotation circle will make transitions or jumps.

\begin{corol}\label{corol intervalos2}
For all $n \in  \mathbb{N}$, there is always at least a prime number $\rho$ such that $(n+1)!+n+1 <\rho < (n+2)!+2$.
\end{corol}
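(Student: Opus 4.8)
The plan is to obtain this as a direct consequence of Bertrand's postulate, which guarantees that for every integer $m>1$ there is a prime $\rho$ with $m<\rho<2m$. First I would set $m=(n+1)!+n+1$; since $m>1$ for every $n\in\mathbb{N}$, Bertrand's postulate produces a prime $\rho$ with
\[
(n+1)!+n+1<\rho<2\bigl((n+1)!+n+1\bigr).
\]
It then suffices to show that the right-hand bound does not exceed the target upper bound, i.e. that $2\bigl((n+1)!+n+1\bigr)\le (n+2)!+2$, because the chain $\rho<2m\le (n+2)!+2$ then places $\rho$ strictly inside the claimed interval. Notice that this interval is exactly the gap between the right endpoint of $I_n$ and the left endpoint of $I_{n+1}$ from Theorem \ref{teor intervalos1}, so the statement asserts that the prime-free factorial blocks are always separated by at least one prime.

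To verify the auxiliary inequality I would write $(n+2)!=(n+2)(n+1)!$ and reduce $2\bigl((n+1)!+n+1\bigr)\le (n+2)!+2$ to $2(n+1)!+2n+2\le (n+2)(n+1)!+2$, that is, to $2n\le n\,(n+1)!$, equivalently $2\le (n+1)!$. This holds for all $n\ge 1$, and the degenerate case $n=0$ (where the claimed interval is $(2,4)$) is witnessed directly by $\rho=3$. Hence the prime $\rho$ furnished by Bertrand's postulate always satisfies $(n+1)!+n+1<\rho<(n+2)!+2$.

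The only point requiring care is that the two bounds can coincide — for instance when $n=1$ one has $2(2!+2)=8=3!+2$ — so the argument must exploit that Bertrand gives the \emph{strict} inequality $\rho<2m$ while only the weak inequality $2m\le (n+2)!+2$ is needed. Beyond this bookkeeping there is no real obstacle: one could equally invoke any effective Chebyshev-type lower bound on $\pi(x)$ strong enough to force a prime in the (very long) interval $\bigl((n+1)!+n+1,\ (n+2)!+2\bigr)$, but Bertrand's postulate is the most economical tool and keeps the proof elementary in the spirit of the paper.
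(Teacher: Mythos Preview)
Your argument via Bertrand's postulate is correct and complete: the auxiliary inequality $2\bigl((n+1)!+n+1\bigr)\le(n+2)!+2$ reduces cleanly to $2\le(n+1)!$, and you handle the boundary cases $n=0,1$ (where equality occurs) by noting that Bertrand gives the strict bound $\rho<2m$ while only $2m\le(n+2)!+2$ is needed.

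The paper itself states this corollary without proof, presenting it as an immediate companion to Theorem~\ref{teor intervalos1} on the prime-free blocks $I_n$. But Theorem~\ref{teor intervalos1} alone only tells you where primes are \emph{absent}, not that one must lie in the complementary stretch, so some extra input is genuinely required; your use of Bertrand supplies exactly what the paper leaves implicit. This is also in keeping with the paper's toolkit, since Bertrand's theorem is invoked explicitly a few results later.
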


\begin{corol}\label{corol intervalos3}
The distance $D$ between the intervals is $D=I_{n+1} - I_{n}= (n+1)^2 n!-n+1 $ for all $n \in \mathbb{N}$.
\end{corol}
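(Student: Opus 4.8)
The plan is to read the symbol $I_{n+1}-I_n$ as the length of the gap separating the two prime‑free blocks, i.e.\ the distance from the right endpoint of $I_n$ to the left endpoint of $I_{n+1}$; this is precisely the quantity that Corollary \ref{corol intervalos2} is about, so it is the natural interpretation. By Theorem \ref{teor intervalos1} we have $\max I_n = (n+1)!+n+1$ and $\min I_{n+1} = (n+2)!+2$, so I would simply set
\[
D \;=\; \min I_{n+1} - \max I_n \;=\; \big[(n+2)!+2\big] - \big[(n+1)!+n+1\big] \;=\; (n+2)! - (n+1)! + 1 - n .
\]

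The only substantive step is then the factorial simplification. Writing $(n+2)! = (n+2)(n+1)!$ gives $(n+2)! - (n+1)! = (n+1)!\,\big[(n+2)-1\big] = (n+1)\,(n+1)!$, and one further use of $(n+1)! = (n+1)\,n!$ turns this into $(n+1)^2\,n!$. Substituting back yields $D = (n+1)^2\,n! - n + 1$, which is the claimed identity, valid for every $n\in\mathbb{N}$ since that is exactly the range on which the intervals $I_n$ were defined in Theorem \ref{teor intervalos1}.

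There is essentially no obstacle here: the statement is a one‑line consequence of Theorem \ref{teor intervalos1} together with elementary manipulation of factorials. The only point worth a word of care is fixing the convention for $I_{n+1}-I_n$: one should check that with the endpoint‑to‑endpoint (across the gap) reading the arithmetic gives $(n+1)^2 n! - n + 1$, rather than the difference of left endpoints (which would give $(n+1)^2 n!$) or of right endpoints (which would give $(n+1)^2 n! + 1$). Once that convention is pinned down, the computation above closes the proof.
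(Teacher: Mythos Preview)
Your proof is correct and is exactly the computation the paper has in mind: the paper states this corollary without an explicit proof, treating it as an immediate consequence of the endpoints given in Theorem~\ref{teor intervalos1}, and the surrounding text (``there are $(n+1)^2 n!-n+1$ consecutive integers that are possible primes'') confirms your endpoint-to-endpoint reading of $I_{n+1}-I_n$. Your factorial manipulation is clean and nothing further is needed.
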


These results state that for all $n$, there are intervals of $n$ consecutive integers composed by which the circle rotates without interest, and furthermore, there are $(n +1)^{2}n!-n+1$ consecutive integers that are possible primes where the circle rotates. Thus, since $n<(n+ 1)^{2}n!-n+1$, it is possible to affirm that the distance between a prime and the next is not that it increases infinitely, but rather that it restarts, taking extensive breaks in every $I_n$ interval (see figure \eqref {Figura: It}).\\

\begin{figure}[ht]
\begin{center}
\includegraphics[width=10cm, height=5cm]{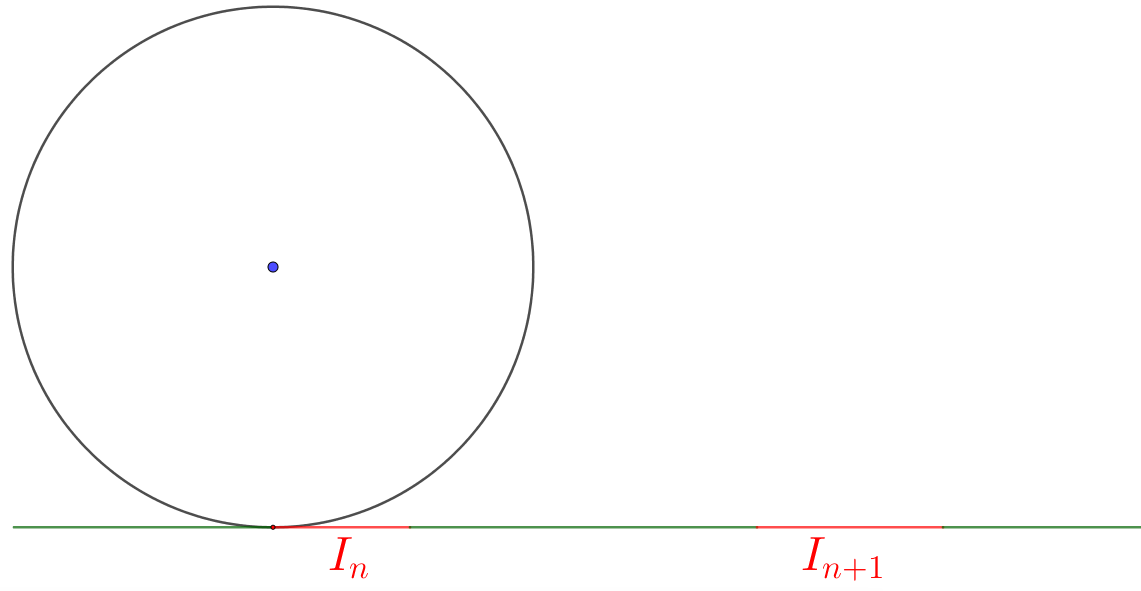}
\caption{
Ova-angular rotation circle transition.}\label{Figura: It}
\end{center}
\end{figure}

On the other hand, just as the intervals where there are no prime numbers grow infinitely, the intervals where there are prime numbers grow infinitely too, in fact, the interval in which there are prime numbers will always be greater than where there are not. This allows us to establish that:

\begin{corol}\label{corol intervalos4} For all $n \in \mathbb{N}$, there are $n$ consecutive odd integers in which at least one of them is a prime number.
\end{corol}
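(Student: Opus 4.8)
The plan is to prove the statement constructively: for each $n$ I will write down $n$ consecutive odd integers one of which is visibly prime, rather than argue by contradiction. Recalling that ``consecutive odd integers'' means an arithmetic progression with common difference $2$, the whole claim collapses once we have a single odd prime: if $\rho\in\mathbb{P}$ is odd, then
\[
\rho,\ \rho+2,\ \rho+4,\ \ldots,\ \rho+2(n-1)
\]
is a list of $n$ consecutive odd integers containing the prime $\rho$. Hence it suffices to exhibit one odd prime, which is trivial (take $\rho=3$, or appeal to the infinitude of primes), and the corollary follows at once.

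To keep the result in the spirit of Theorem~\ref{teor intervalos1} and the two preceding corollaries, I would alternatively derive it from Corollary~\ref{corol intervalos2}. First I would fix $n\in\mathbb{N}$ and use Corollary~\ref{corol intervalos2} to obtain a prime $\rho$ with $(n+1)!+n+1<\rho<(n+2)!+2$; since $(n+1)!+n+1\geq 4$ for every $n\in\mathbb{N}$, this forces $\rho\geq 5$, so $\rho$ is odd. Then the $n$ consecutive odd integers $\rho,\rho+2,\ldots,\rho+2(n-1)$ witness the claim, and by Corollary~\ref{corol intervalos3} the prime-bearing stretch has length $D=(n+1)^2 n!-n+1$, which exceeds $2n$ for all $n$, so such a block of odd integers can even be chosen to lie entirely inside that stretch, although the statement does not require this.

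The only step that needs a word of justification is the comparison $D>2n$, and this is a routine induction: it holds already at $n=1$, where $D=4>2$, and the left-hand side grows factorially while the right-hand side grows linearly. So I do not expect any genuine obstacle; the main ``difficulty'' here is purely expository, namely whether to present the one-line argument from the existence of an odd prime or the slightly longer one that ties the corollary back to the factorial intervals $I_n$. I would give the short argument as the official proof and add a remark that the interval version additionally tells us where such a block of $n$ consecutive odd integers can be found.
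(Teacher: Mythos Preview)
Your proof is correct. As you note, the corollary as stated is essentially trivial: once a single odd prime $\rho$ is fixed, the block $\rho,\rho+2,\ldots,\rho+2(n-1)$ already witnesses the claim for every $n$. The paper gives no explicit proof of this corollary; it is presented as a consequence of the sentence just before it, namely that the gap $D=(n+1)^2 n!-n+1$ between the prime-free intervals $I_n$ and $I_{n+1}$ always exceeds the length $n$ of $I_n$. So the intended argument is your second, ``contextual'' one: pick a prime $\rho$ in the stretch guaranteed by Corollary~\ref{corol intervalos2} and observe that the surrounding stretch of integers is long enough to accommodate $n$ consecutive odd numbers.

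The difference between the two routes is therefore real but minor. Your one-line argument from the existence of an odd prime is strictly more elementary and shows that the corollary does not actually depend on Theorem~\ref{teor intervalos1} or its corollaries at all. The paper's route, and your second version of it, has the modest extra content that such a block of $n$ consecutive odd integers can be located inside the specific window $\bigl((n+1)!+n+1,\,(n+2)!+2\bigr)$, but as you say, the statement itself does not require this localization. Your inductive check that $D>2n$ is fine and matches the paper's informal remark that ``the interval in which there are prime numbers will always be greater than where there are not.''
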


\textbf{Note $\textbf{2}$.}
What is being asserted is not the fact that for all rotation $\Game$ that is between two intervals $I_n$, a prime number exists, no; instead, in these space mentioned above, there is at least one consecutive $\Game$ and $\Game+1$ rotation in which primes are formed.
\begin{teor}
For all $\daleth_\rho \neq 2,3,5$ is true that:
\begin{equation}
\daleth_{\rho}^{\varphi(360)} \cong 1 \pmod{360}.
\end{equation}
\end{teor}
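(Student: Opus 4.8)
The plan is to recognize this statement as a direct instance of Euler's theorem for the modulus $360 = 2^{3}\cdot 3^{2}\cdot 5$, once the coprimality hypothesis has been verified. First I would record this factorization and compute $\varphi(360)=\varphi(2^{3})\,\varphi(3^{2})\,\varphi(5)=4\cdot 6\cdot 4 = 96$ using the multiplicativity of the Euler totient; this identifies the exponent appearing in the congruence.

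The key step is to show that every ova-angular residue $\daleth_{\rho}$ with $\daleth_{\rho}\neq 2,3,5$ satisfies $\gcd(\daleth_{\rho},360)=1$. By Definition \ref{def1} we have $\daleth_{\rho}\equiv\rho\pmod{360}$ for some prime $\rho$, and since a residue and the integer producing it share the same gcd with the modulus, $\gcd(\daleth_{\rho},360)=\gcd(\rho,360)$. If $\rho\notin\{2,3,5\}$, then $\rho$ is a prime dividing none of $2,3,5$, so $\gcd(\rho,360)=1$; conversely, the only primes whose ova-angular residue lies in $\{2,3,5\}$ are $2,3,5$ themselves, because any other prime congruent to $2$ modulo $360$ would be even, and likewise for $3$ and $5$. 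Hence excluding $\daleth_{\rho}\in\{2,3,5\}$ is precisely excluding the residues that fail to be coprime to $360$; if one prefers, this can be checked directly against the explicit lists of $A$ and $B$, noting that every element of $B$ is a product of primes $\geq 7$ and that no element of $A\cup B$ other than $2,3,5$ is divisible by $2$, $3$, or $5$.

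With $\gcd(\daleth_{\rho},360)=1$ in hand, Euler's theorem yields $\daleth_{\rho}^{\varphi(360)}\equiv 1\pmod{360}$, which is exactly the assertion. I do not expect a real obstacle here: the totient computation and the appeal to Euler's theorem are routine, and the only point that genuinely deserves to be written out is the equivalence between the exclusion ``$\daleth_{\rho}\neq 2,3,5$'' and the condition ``$\gcd(\daleth_{\rho},360)=1$'' in the setting of ova-angular residues.
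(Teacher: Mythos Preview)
Your proposal is correct and follows exactly the paper's own approach: invoke Euler's theorem after noting that $360=2^{3}\cdot 3^{2}\cdot 5$ forces $\gcd(\daleth_{\rho},360)=1$ whenever $\daleth_{\rho}\neq 2,3,5$. You simply spell out the coprimality verification and the totient computation in more detail than the paper does.
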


\begin{proof}
Just apply Euler's theorem for all $\daleth_\rho \neq 2,3,5 $, since $360=2^{3}.\,3^{2}.\,5^{1}$.
\end{proof}

The above suggests that the multiplicative inverse that the system solves
$\daleth_{\rho} \,X \cong 1 \pmod{360}  \,,  \,\,\,\textit{with}\,\,\daleth_\rho \neq 2,3,5$, is $X={\daleth_{\rho}}^{\varphi(360)-1}=\daleth_{\rho}^{95}$, with $\varphi$ as the Euler's function.

\begin{teor}[Equivalence relation]
The prime numbers under the ova-angular rotations fulfill an equivalence relation.
\end{teor}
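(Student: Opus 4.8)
The plan is first to pin down which relation is intended, since the statement only says ``an equivalence relation'' without naming it. The natural --- and essentially only --- candidate is to declare two primes $\rho$ and $\sigma$ related exactly when they produce the same ova-angular residue, i.e.
$$\rho \sim \sigma \quad \Longleftrightarrow \quad \daleth_{\rho} = \daleth_{\sigma},$$
which by Definition~\ref{def1} is the same as requiring $\rho \equiv \sigma \pmod{360}$. So the first step is to state this definition of $\sim$ on $\mathbb{P}$ and to observe that it is well posed: by the Note preceding Definition~\ref{def1}, each prime has a \emph{unique} residue $\daleth_{\rho}$ in $\{0,1,\dots,359\}$, so no choices are involved and $\sim$ is genuinely a binary relation on $\mathbb{P}$.

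Next I would verify the three axioms, which reduce to elementary facts about equality of residues (equivalently, about congruence modulo $360$). Reflexivity: $\daleth_{\rho}=\daleth_{\rho}$, hence $\rho\sim\rho$. Symmetry: $\daleth_{\rho}=\daleth_{\sigma}$ forces $\daleth_{\sigma}=\daleth_{\rho}$, hence $\sigma\sim\rho$. Transitivity: from $\daleth_{\rho}=\daleth_{\sigma}$ and $\daleth_{\sigma}=\daleth_{\tau}$ we get $\daleth_{\rho}=\daleth_{\tau}$, hence $\rho\sim\tau$. Each line is a single sentence; alternatively one may simply note that $\sim$ is the restriction to $\mathbb{P}$ of the congruence relation $x\equiv y \pmod{360}$ on $\mathbb{Z}$, which is a textbook equivalence relation, and the restriction of an equivalence relation to a subset is again an equivalence relation.

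Finally I would describe the quotient, so the result is meaningful rather than vacuous: using equation~\eqref{ecprincipal}, the class of $\rho$ is precisely the set of primes of the form $\daleth_{\rho}+360\,\Game$ with $\Game\in\mathbb{N}$, so the classes are in bijection with the admissible residues, that is, with $C^{*}$, giving exactly $99$ of them, and Dirichlet's theorem guarantees each class is nonempty (indeed infinite). The ``hard part'' here is not hard at all --- the only point requiring care is the well-definedness observed in the first step, namely that the residue attached to a prime is unambiguous; once that is granted the argument is pure bookkeeping, and the genuine content of the statement lies in the count of $99$ classes and their non-emptiness via Dirichlet, both already recorded earlier in the paper.
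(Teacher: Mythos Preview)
Your proposal is correct and follows essentially the same approach as the paper: identify the relation as $\rho\sim\sigma \Leftrightarrow \rho\equiv\sigma\pmod{360}$ (equivalently $\daleth_\rho=\daleth_\sigma$) and then check the three axioms, which reduce to standard properties of congruence. The paper's own proof is even terser --- it writes out reflexivity and declares symmetry and transitivity ``straightforward'' --- while your additional remarks on well-definedness and the description of the quotient anticipate exactly the content of the Corollary that immediately follows in the paper.
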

\begin{proof}
Reflexivity is clear. In fact, $\rho\equiv\rho \pmod{360}\,\, \Leftrightarrow \,\,\daleth_\rho \equiv \daleth_\rho \pmod{360}\,\, ;\,\,\forall\,\rho\,\in \mathbb{P}.$\\
Symmetry and transitivity are also straightforward.\\
\end{proof}

\begin{corol}[$\mathbb{P}$ partition] Ova-angular rotations generate a partition of $\,\mathbb {P}$ in equivalence classes. i.e., $\mathbb{P}_{360}$. 
\end{corol}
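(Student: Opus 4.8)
The plan is to obtain this statement as an immediate corollary of the preceding Theorem (Equivalence relation) together with the classical fact that every equivalence relation on a set induces a partition of that set into its equivalence classes, and then to describe the resulting partition concretely using the data already assembled in the paper.

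First I would recall that the relation $\sim$ on $\mathbb{P}$ given by $\rho\sim\rho'\Leftrightarrow\rho\equiv\rho'\pmod{360}$, equivalently $\daleth_\rho=\daleth_{\rho'}$, has already been shown to be reflexive, symmetric and transitive. Consequently the quotient set $\mathbb{P}/\!\sim$, which we denote $\mathbb{P}_{360}$, is well defined, and its elements are the classes $[\rho]=\{\rho'\in\mathbb{P}\,:\,\daleth_{\rho'}=\daleth_\rho\}$. By the standard correspondence between equivalence relations and partitions, these classes are nonempty, pairwise disjoint, and their union is all of $\mathbb{P}$; this is precisely the assertion that ova-angular rotations partition $\mathbb{P}$.

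Next I would make the partition explicit by indexing its blocks with the ova-angular residues. For each $r\in C^{*}$ put $\mathbb{P}^{(r)}=\{\rho\in\mathbb{P}\,:\,\daleth_\rho=r\}$. Since $C^{*}=A\cup B$ is a complete set of residues of $\mathbb{P}$ modulo $360$ (the Theorem $Card(C^{*})=99$), every prime $\rho$ has $\daleth_\rho\in C^{*}$ and lies in exactly one block, so $\mathbb{P}=\bigcup_{r\in C^{*}}\mathbb{P}^{(r)}$ with the union disjoint, and hence $|\mathbb{P}_{360}|\le 99$. To obtain equality, i.e. that no block is empty, I would invoke Dirichlet's theorem: for each $r$ with $\gcd(r,360)=1$ the progression $\{360k+r:k\in\mathbb{N}\}$ contains infinitely many primes, so $\mathbb{P}^{(r)}\neq\emptyset$; and the residues $2,3,5$ are witnessed by those primes themselves. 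Thus $\mathbb{P}_{360}$ genuinely has $99$ classes.

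The argument presents essentially no obstacle: verifying the equivalence-relation axioms and passing from an equivalence relation to a partition are routine, and the single nontrivial ingredient is the nonemptiness of every block, which is exactly the point at which Dirichlet's theorem — already the backbone of the entire ova-angular construction — enters.
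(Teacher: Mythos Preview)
Your proposal is correct and takes essentially the same approach as the paper: the corollary is stated immediately after the equivalence-relation theorem and follows from the standard fact that an equivalence relation induces a partition. The paper gives no separate proof at all, so your additional explicit description of the blocks $\mathbb{P}^{(r)}$ and the appeal to Dirichlet to show each block is nonempty go beyond what the paper records, but they are consistent with material developed elsewhere in the paper and do not change the underlying argument.
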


In order to analyze the geometry in the rotation circle, all of the above is summarized with the following definition:

\begin{defin}[Ova-angular function]
Let $f_{\pmod{360}}:$ $\mathbb{P} \rightarrow C^{*}$ be
such that if $\rho \in \mathbb{P}$ then
$f(\rho)=\daleth_{\rho}\,\,\,\,\text{,with}\,\,\rho=\daleth_{\rho}+360(\Game_{\rho})$ and $0<\daleth_{\rho}<360.$
\end{defin}
It is clear that the function $f$ is well defined, in particular $f$ is surjective.\\

From Dirichlet's theorem "on arithmetic progressions" is clear that all Ova-angular residues $\daleth_\rho\neq 2,3,5$ 
produce infinite prime numbers, highlighting with greater interest $2,3$ and $5$, unique primes in their respective class. Finally, it is highlighted that an ova-angular $\daleth_\rho \neq 2,3,5$ is itself a unit of $\mathbb {P}_{360}$ and forms a reduced residue system (R.R.S).\\

Below are some geometric properties of the ova-angular rotation circle.

\subsection{\textbf{Geometry}} \definecolor{Micolor4}{gray}{1.0}
\textcolor{Micolor4}{.}\newline
In Figure \ref{Figura: circle1}, all the elements of the set $C^{*}$ have been located. It can be seen that the behavior of the prime numbers is not disordered as is usually conjectured if it is carefully observed by locating a vertical axis of symmetry that passes through the center of the circle. As shown in figure \ref{Figura: circle2}, we can visualize that there is a symmetry between the $\daleth_\rho<180$ (right) and the largest at $\daleth_\rho>180$ (left).\\
\begin{figure}[ht]
\centering
\includegraphics[width=1.11\textwidth]{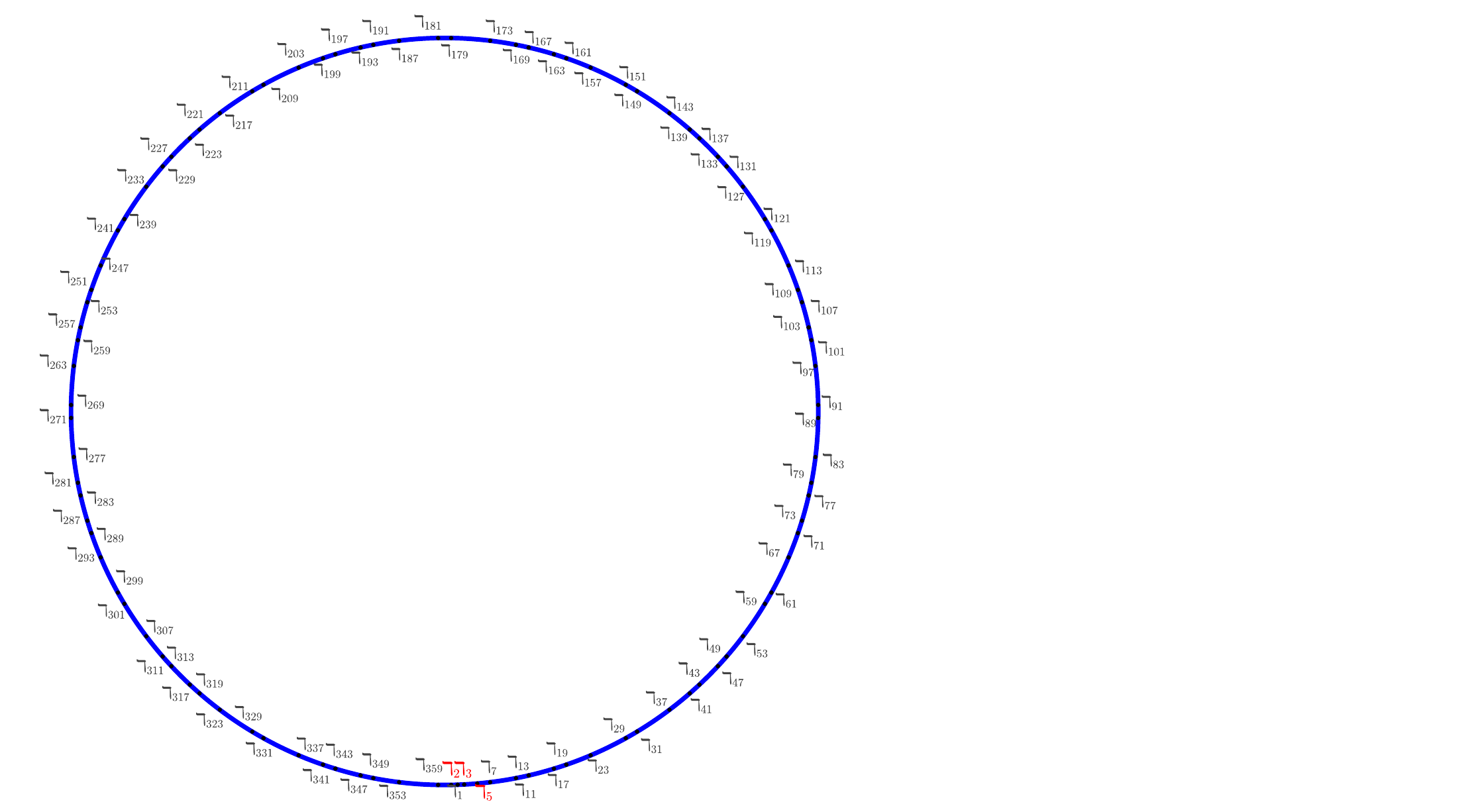}
\caption{Ova-angular residues.}\label{Figura: circle1}
\end{figure}

\begin{figure}[ht]
\centering
\includegraphics[width=1.11\textwidth]{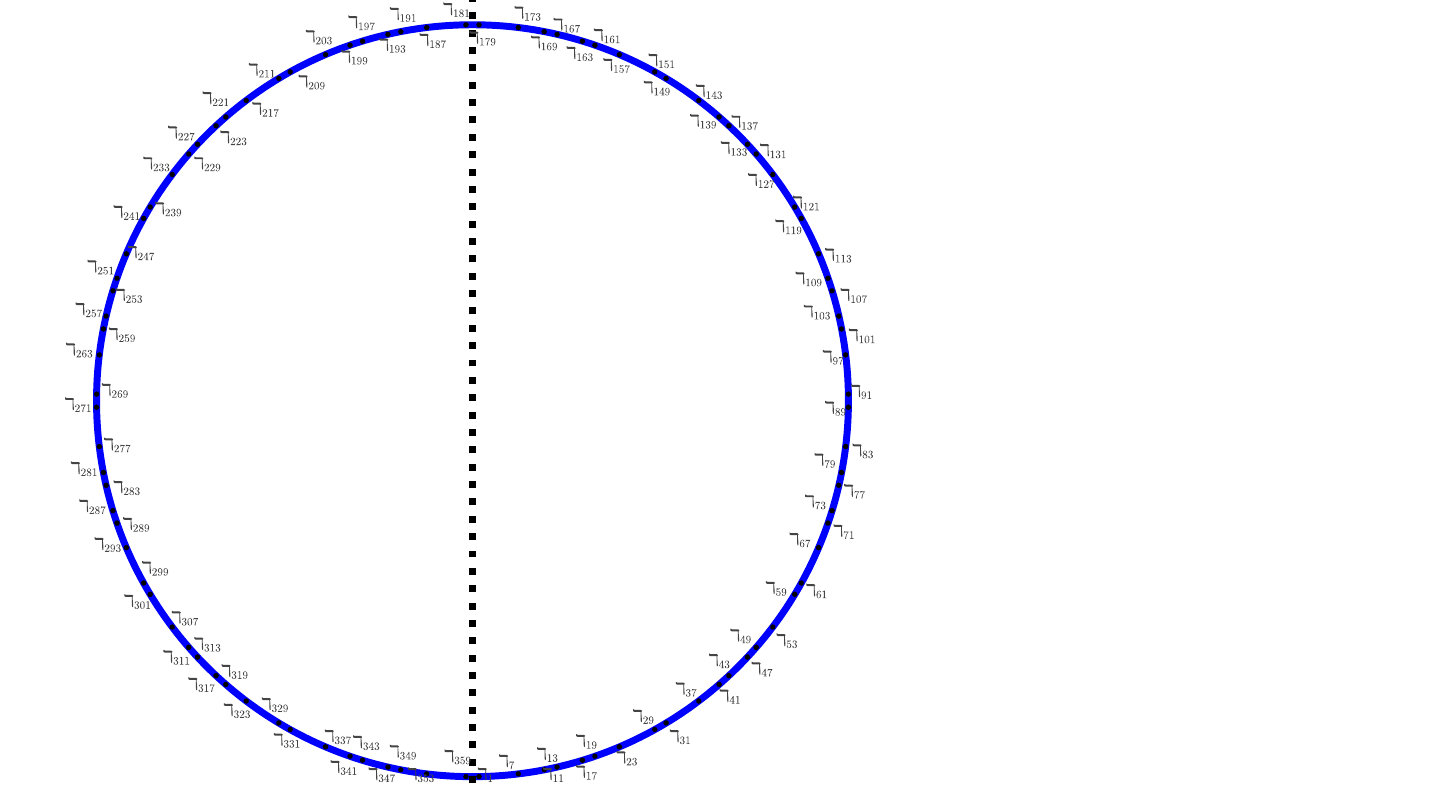}
\caption{Ova-angular residues division.}\label{Figura: circle2}
\end{figure}

Hereinafter, the set $C=C^{*}-\{2,3,5\}$ will be considered.\\

In Figure \ref{Figura:circle4}, only the consecutive ova-angular residues $\daleth_\rho$ are taken with a difference of two units (\emph {twin lines}). These $\daleth_\rho$
correspond to residual classes two to two that generate twin prime numbers. \\

\begin{figure}[ht]
\centering
\includegraphics[width=1.1492\textwidth]{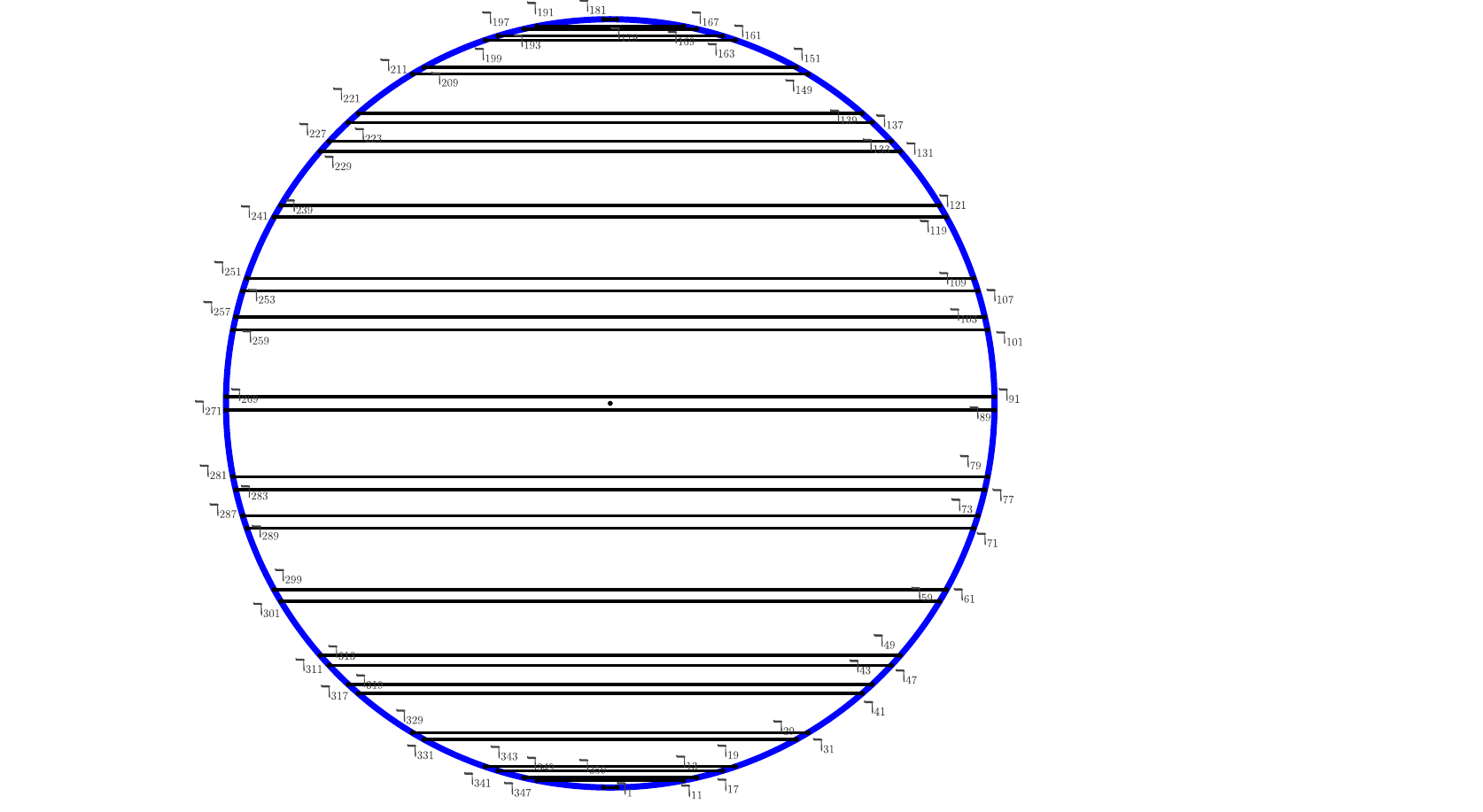}
\caption{Twin lines.}\label{Figura:circle4}
\end{figure}

Similarly, in Figure \ref{Figura:circle5}, the nonconsecutive Ova-angular residues $\daleth_\rho$ are taken (\emph{Particular lines}).\\

\begin{figure}[ht]
\centering
\includegraphics[width=1.1492\textwidth]{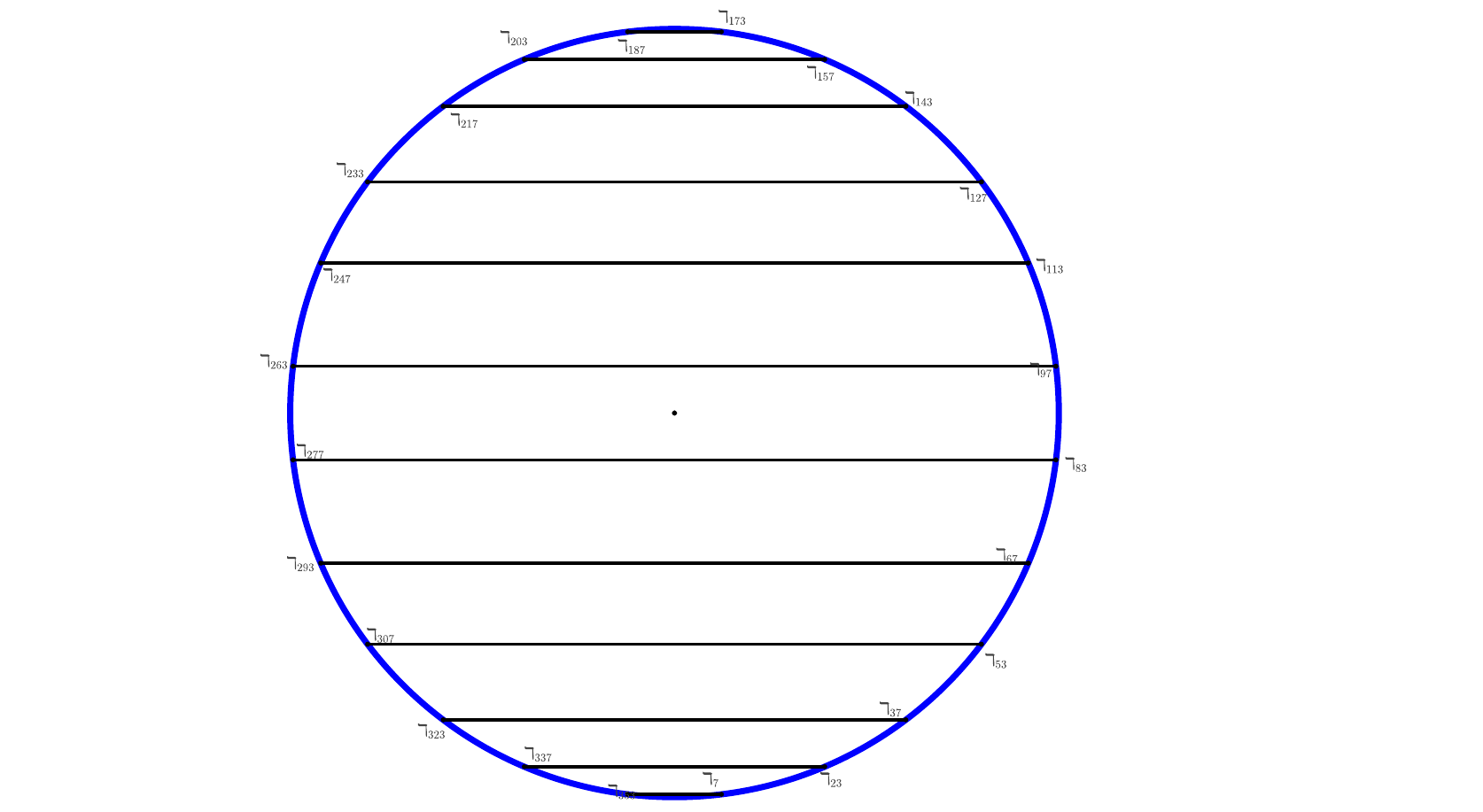}
\caption{Particular lines.}\label{Figura:circle5}
\end{figure}

The Figures \ref{Figura:circle6}-\ref{Figura:circle8} direct the lines described above to the center to provide an aesthetic look of the ova-angular residues. 

\begin{figure}[ht]
\centering
\includegraphics[width=0.7142\textwidth]{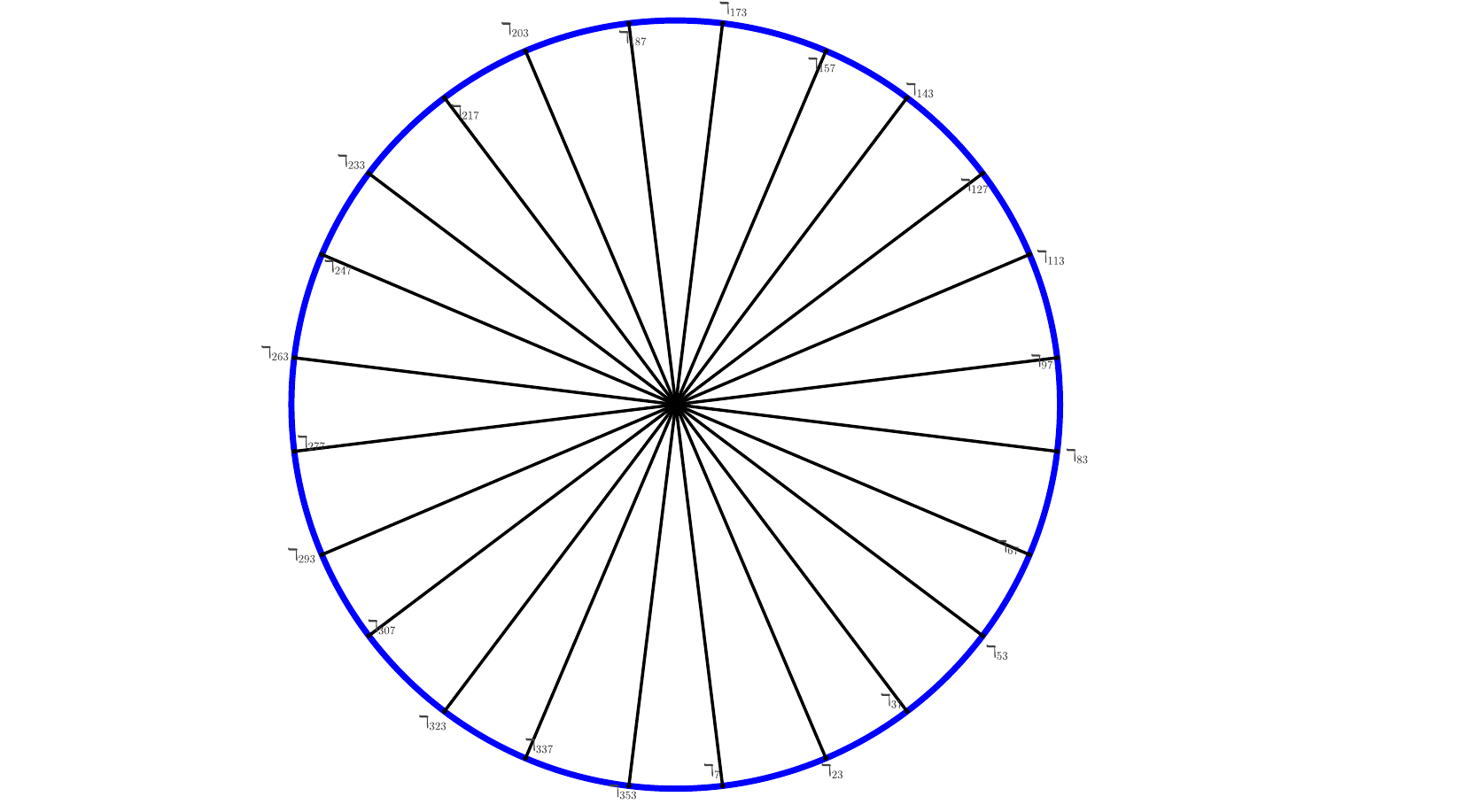}
\caption{Particular lines to the center.}\label{Figura:circle6}
\end{figure}

\begin{figure}[ht]
\centering
\includegraphics[width=0.7142\textwidth]{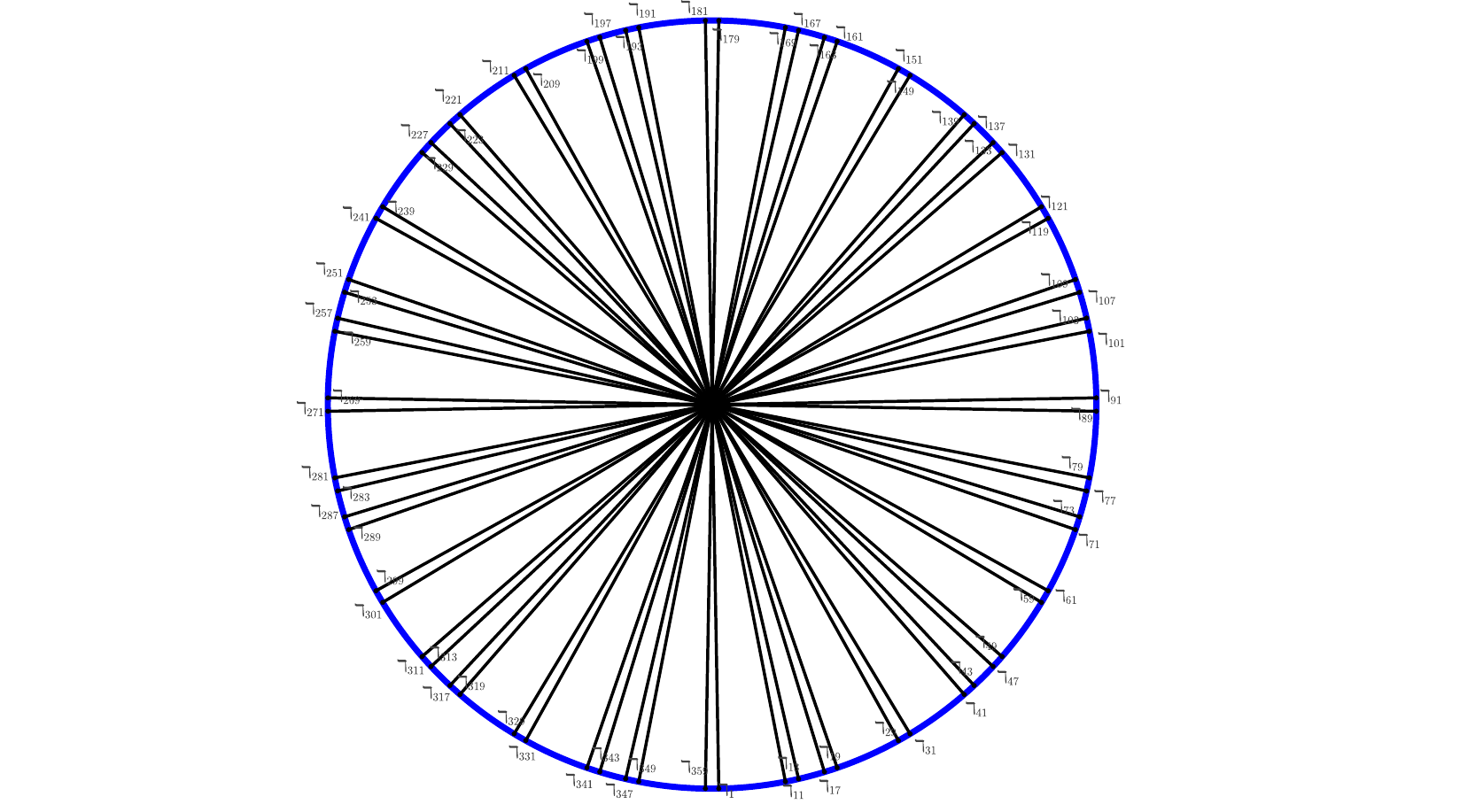}
\caption{Twin lines to the center.}\label{Figura:circle7}
\end{figure}

\begin{figure}[ht]
\centering
\includegraphics[width=0.7142\textwidth]{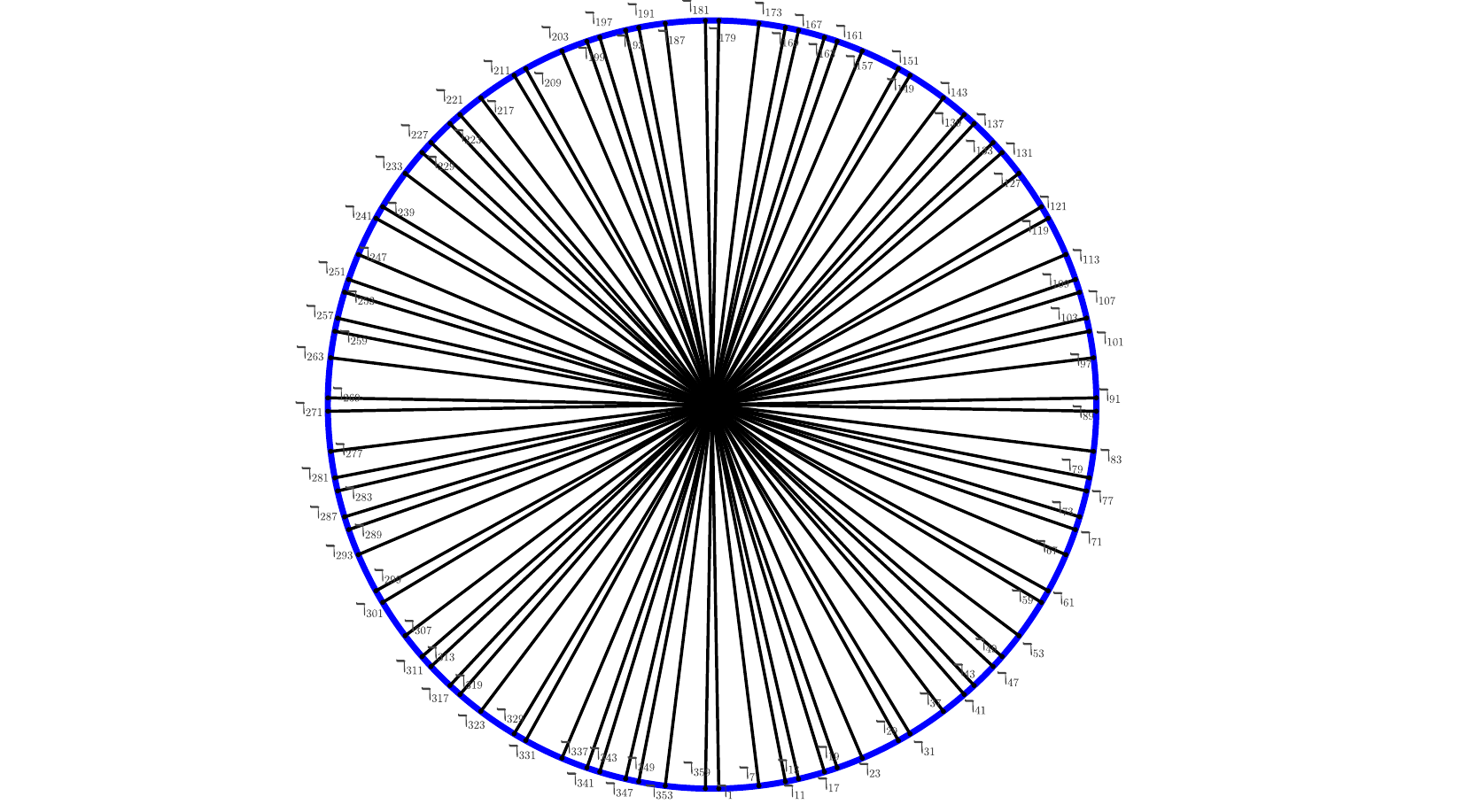}
\caption{\,\,\,\,\,\,All lines to the center.}\label{Figura:circle8}

\end{figure}

\clearpage


\begin{teor}[$C$ elements]
For all $\daleth_{\rho} \in C$, it is true that: i) $360-\daleth_{\rho} \in C$, ii) $\daleth_{\rho_i}*\daleth_{\rho_k} \pmod{360} \in C$ and iii) $\daleth_\rho^n \pmod{360} \in C$ for all $n \in \mathbb{N}$. \end{teor}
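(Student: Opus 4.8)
The plan is to first identify $C$ explicitly as the group of units of $\mathbb{Z}/360\mathbb{Z}$, that is, the set of residues in $\{1,\dots,359\}$ that are coprime to $360$, and then to read off i)--iii) from the closure of this group under negation, multiplication and exponentiation.

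First I would prove that
$$C=\{\,r\in\mathbb{N}\,:\,0<r<360,\ \gcd(r,360)=1\,\}.$$
For the inclusion ``$\subseteq$'': if $\daleth_{\rho}\in C$ then $\rho$ is a prime different from $2,3,5$, and since $360=2^{3}\cdot 3^{2}\cdot 5$ this forces $\gcd(\rho,360)=1$, hence $\gcd(\daleth_{\rho},360)=1$. For ``$\supseteq$'': given $r$ with $0<r<360$ and $\gcd(r,360)=1$, Dirichlet's theorem (already invoked above) yields a prime $\rho\equiv r\pmod{360}$; such a $\rho$ is automatically different from $2,3,5$, because those three primes have ova-angular residues $2,3,5$, none of which is coprime to $360$. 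Hence $r=\daleth_{\rho}\in C^{*}\setminus\{2,3,5\}=C$. As a sanity check this gives $|C|=\varphi(360)=\varphi(2^{3})\varphi(3^{2})\varphi(5)=4\cdot 6\cdot 4=96$, consistent with $|C^{*}|-3=99-3$.

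Granting this identification, the three statements are immediate. For i), $\gcd(360-\daleth_{\rho},360)=\gcd(\daleth_{\rho},360)=1$ and $0<360-\daleth_{\rho}<360$, so $360-\daleth_{\rho}\in C$. For ii), a product of two integers each coprime to $360$ is again coprime to $360$, so its least positive residue modulo $360$ lies in $\{1,\dots,359\}$ (it cannot be $0$, precisely because the product is a unit) and is coprime to $360$, hence belongs to $C$. For iii), the same reasoning applies to $\daleth_{\rho}^{\,n}$ for every $n\in\mathbb{N}$, since a power of a unit is a unit; a one-line induction on $n$ finishes it.

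The only real subtlety — rather than a genuine obstacle — is the inclusion ``$\supseteq$'' above, which is exactly where Dirichlet's theorem enters: without it one would only know that $C$ is \emph{contained} in the reduced residue system, not equal to it, and then i)--iii) would be asserting membership in a set that might a priori be strictly larger than $C$. Everything else is the standard fact that the units modulo $360$ form a group, together with routine bookkeeping on the range $0<\cdot<360$ and on the exclusion of $2,3,5$.
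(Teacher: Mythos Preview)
Your argument is correct. The identification of $C$ with the reduced residue system modulo $360$ is exactly right (and, incidentally, the paper states this fact just before the theorem, though it does not use it in the proof), and once that is in place i)--iii) are indeed just closure of the unit group under negation, multiplication and powers.

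The paper's own proof proceeds differently. For i) and ii) it simply declares the claims ``trivial'' and instructs the reader to check each element of $C$ directly; for iii) it runs the same induction on $n$ that you sketch, reducing to ii) at the inductive step. So the inductive part of iii) is the same in both approaches, but your treatment of i) and ii) is structural rather than enumerative: you prove once and for all that $C=(\mathbb{Z}/360\mathbb{Z})^{\times}$ via Dirichlet, and then read off closure under $x\mapsto -x$ and $(x,y)\mapsto xy$ from the group axioms. Your route is cleaner, works for any modulus, and makes transparent \emph{why} the closure holds; the paper's route is more elementary in that it avoids invoking Dirichlet at this point, at the cost of an implicit finite verification over $96$ (respectively $96^{2}$) cases.
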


\begin{proof}
i) and ii) The proof is trivial. Just analyze for each element of $ C $.\\

iii) (Matematical induction.) For $\daleth \in C$, it is clear that when $n=1$ it is true that
$$\daleth^1 \equiv \daleth \pmod{360} \,\,;\,\,   \textit{for some} \,\,\daleth \in C, $$
suppose it is true for $n=k$, that is:
$$\daleth^k \equiv \daleth_s \pmod{360} \,\,;\,\, \textit{for some} \,\, \daleth_s \in C \,\,\,\,(\textit{
Inductive hypothesis}).$$
Since
$$\daleth^{k+1} \equiv \daleth^k*\daleth \pmod{360}, $$
by the inductive hypothesis, we have
$$\daleth^{k+1} \equiv \daleth_s*\daleth \pmod{360} \,\,;\,\, \textit{for some} \,\, \daleth_s \in C.$$
Finally by ii) it is true that
$$\daleth^{k+1} \equiv \daleth_r \pmod{360} \,\,;\,\, \textit{for some} \,\, \daleth_r \in C. $$
\end{proof}

The above explains the symmetrical sense of the previous figures.

\begin{teor}\label{elementoinverso}
For all $\daleth_{\rho} \in C$, it is true that the inverse of the $\daleth$ is in $C$, i.e., $\daleth^{-1} \in C$.
\end{teor}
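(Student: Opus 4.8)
The plan is to reduce the statement to two facts already available: that every element of $C$ is a unit modulo $360$ (so that its inverse is well defined), and that powers of elements of $C$ stay inside $C$.

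First I would note that if $\daleth\in C = C^{*}-\{2,3,5\}$, then $\daleth=\daleth_\rho$ for some prime $\rho\notin\{2,3,5\}$. Since $360=2^{3}\cdot 3^{2}\cdot 5$, such a $\rho$ is coprime to $360$, and therefore so is $\daleth_\rho=\rho-360\lfloor\rho/360\rfloor$. Hence $\gcd(\daleth,360)=1$, the congruence $\daleth X\equiv 1\pmod{360}$ has a unique solution modulo $360$, and we may speak of $\daleth^{-1}$ as the representative of that solution in $\{1,\dots,359\}$.

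Next, by the earlier theorem giving $\daleth^{\varphi(360)}\equiv 1\pmod{360}$ (Euler's theorem for the modulus $360$), multiplying by $\daleth^{-1}$ yields $\daleth^{-1}\equiv\daleth^{\varphi(360)-1}=\daleth^{95}\pmod{360}$. It then suffices to show that this residue lies in $C$, which is exactly item iii) of the preceding Theorem on the elements of $C$, applied with $n=95$. Thus $\daleth^{-1}\in C$.

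I do not expect a genuine obstacle here; the only step needing attention is the first one, i.e.\ verifying that membership in $C$ forces coprimality with $360$ (equivalently, that $2,3,5$ are the only residue classes of primes that fail to be units). Once that is in place, invertibility is automatic and the conclusion is a one-line consequence of Euler's theorem together with the closure property iii) proved above. As a remark, since $|C|=|C^{*}|-3=96=\varphi(360)$, the set $C$ is in fact the whole unit group $(\mathbb{Z}/360\mathbb{Z})^{*}$, which furnishes an alternative, structural proof: the inverse of a unit is again a unit.
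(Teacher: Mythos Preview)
Your argument is correct. Every $\daleth\in C$ is a residue of a prime $\rho\notin\{2,3,5\}$, hence coprime to $360=2^{3}\cdot3^{2}\cdot5$; Euler's theorem then gives $\daleth^{-1}\equiv\daleth^{95}\pmod{360}$, and item iii) of the preceding theorem on $C$ elements places $\daleth^{95}$ inside $C$. Your remark that $|C|=96=\varphi(360)$ identifies $C$ with the full unit group $(\mathbb{Z}/360\mathbb{Z})^{*}$ and gives an even shorter structural proof.

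The paper proceeds differently: it simply asserts that the proof is trivial once one explicitly computes the inverse of each $\daleth\in C$ and observes it again lies in $C$, i.e.\ a direct case-by-case verification over the $96$ elements. Your route is genuinely distinct and more conceptual: instead of checking $96$ inverses by hand, you leverage two results the paper has already established (the Euler congruence $\daleth^{\varphi(360)}\equiv1$ and the closure of $C$ under powers) to get the conclusion in one line. The paper's approach has the advantage of being completely self-contained and producing the actual inverse pairs (which it later uses to draw Figure~\ref{Figura:circleInversos}); your approach has the advantage of explaining \emph{why} the statement holds and of scaling to any modulus without recomputation.
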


\begin{proof}
The proof is trivial if all the inverses for each $\daleth \in C $ are found.
\end{proof}

\textbf{Note $\textbf{3}$.} It is highlighted that the ova-angular rotation circle establishes a topologically stable set, it presents a completely circular symmetry, that is symmetry about the $x$ axis, $y$ axis, the lines $y =x $, and $y =-x$ oblique. Figures \ref{Figura:circle9}- \ref{Figura:circle11} show these symmetries and the Figure \ref{Figura:circle12} groups all these into a better image of the prime numbers generator set. Finally, the Figure \ref{Figura:circleInversos} shows once again the symmetry in the ova-angular circle by connecting each ova $\daleth$ with its respective inverse $\daleth^{-1} \in C$.

\begin{figure}[ht]
\centering
\includegraphics[width=0.7\textwidth]{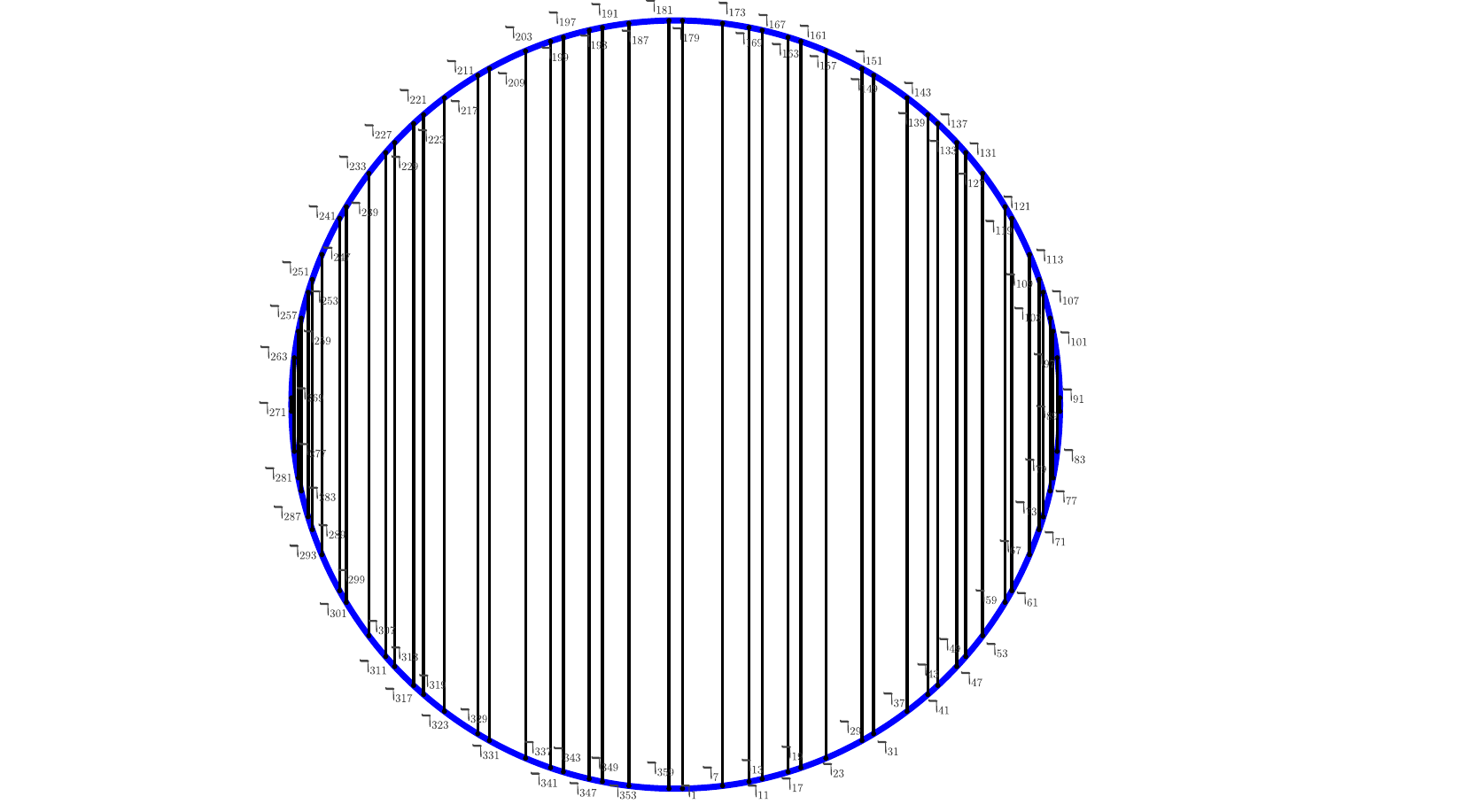}
\caption{x axis symmetry.}\label{Figura:circle9}
\end{figure}

\begin{figure}[ht]
\centering
\includegraphics[width=0.6\textwidth]{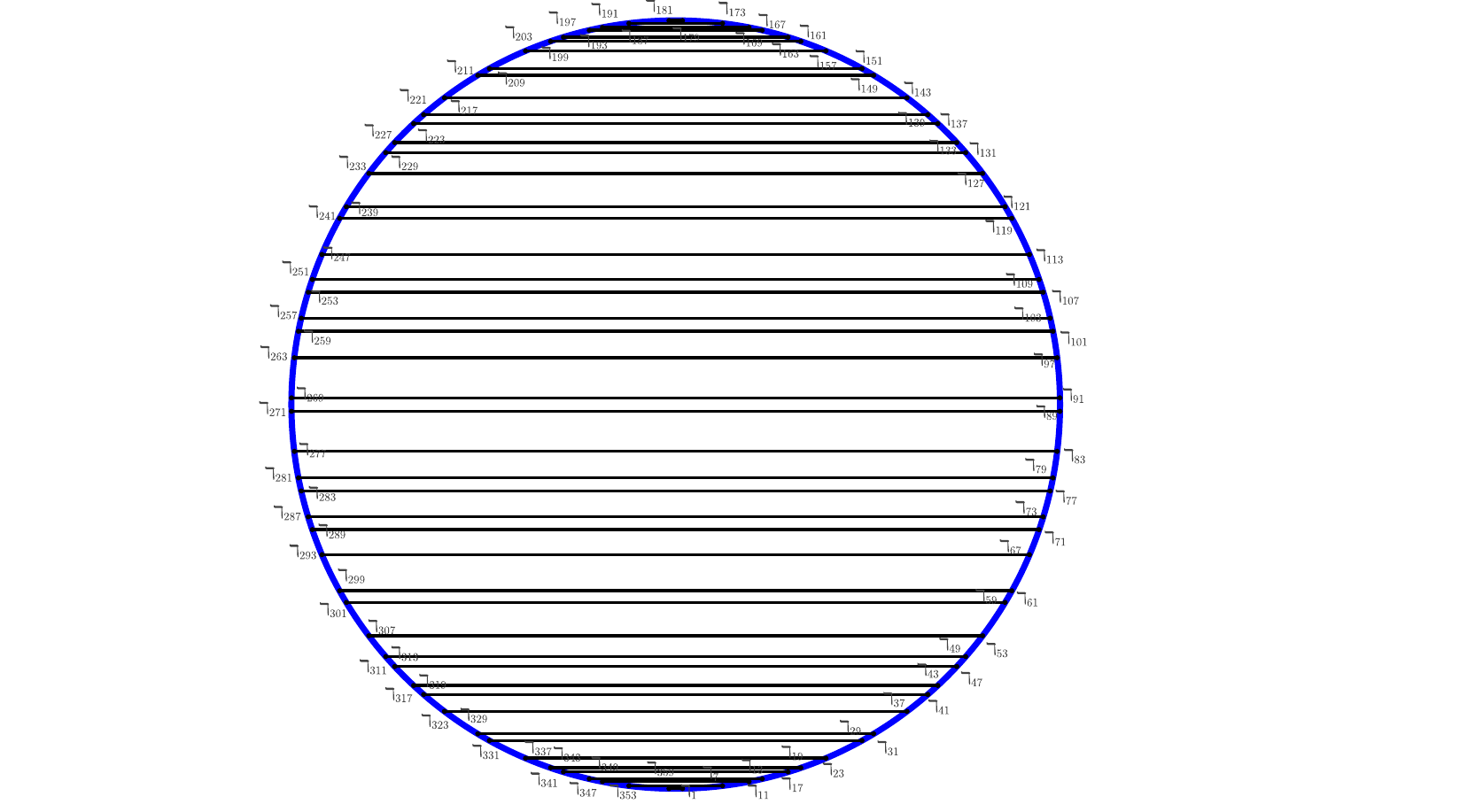}
\caption{y axis symmetry.}\label{Figura:circle10}
\end{figure}

\begin{figure}[ht]
\centering
\includegraphics[width=0.6\textwidth]{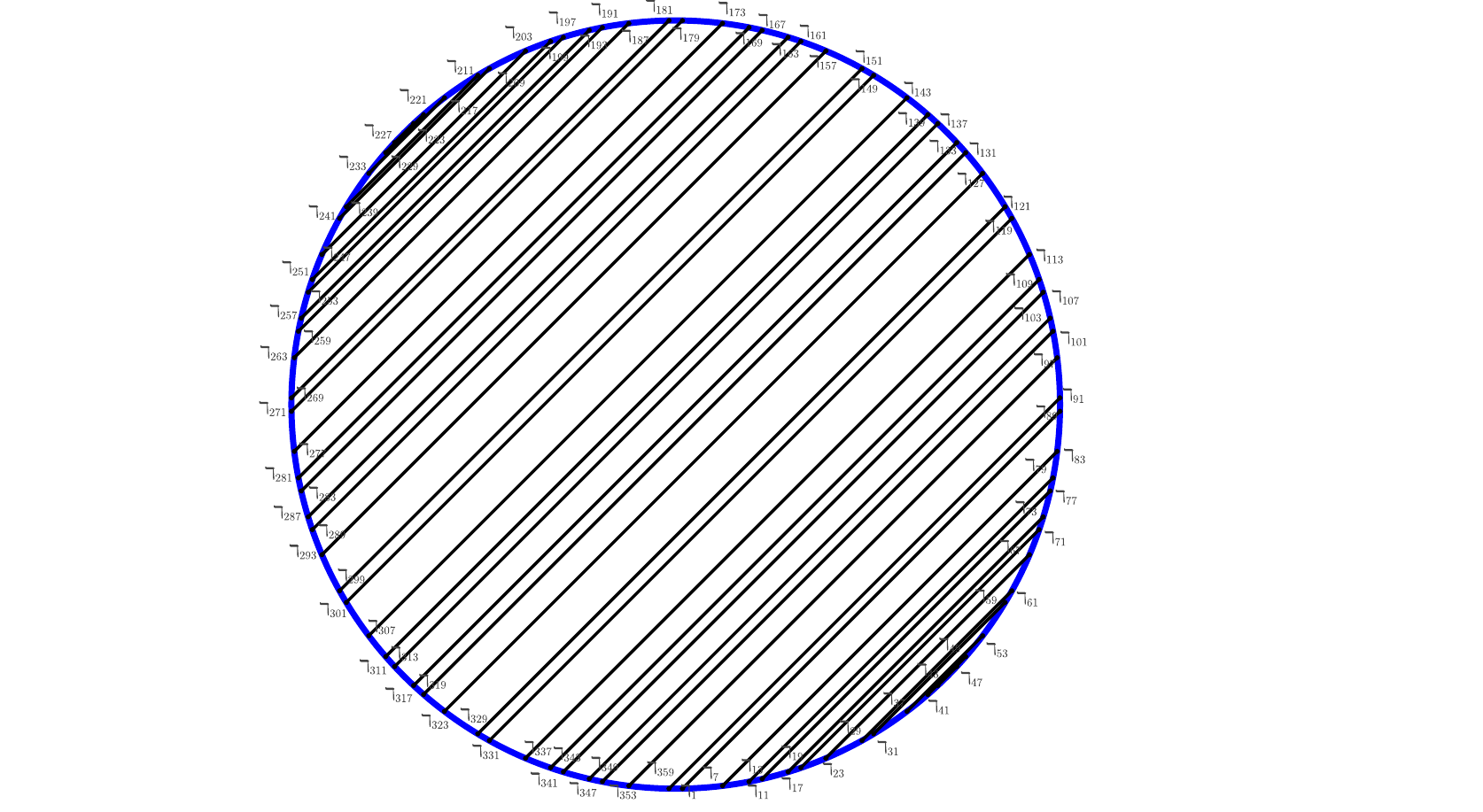}
\caption{Oblique symmetry.}\label{Figura:circle11}
\end{figure}

\begin{figure}[ht]
\centering
\includegraphics[width=0.803\textwidth]{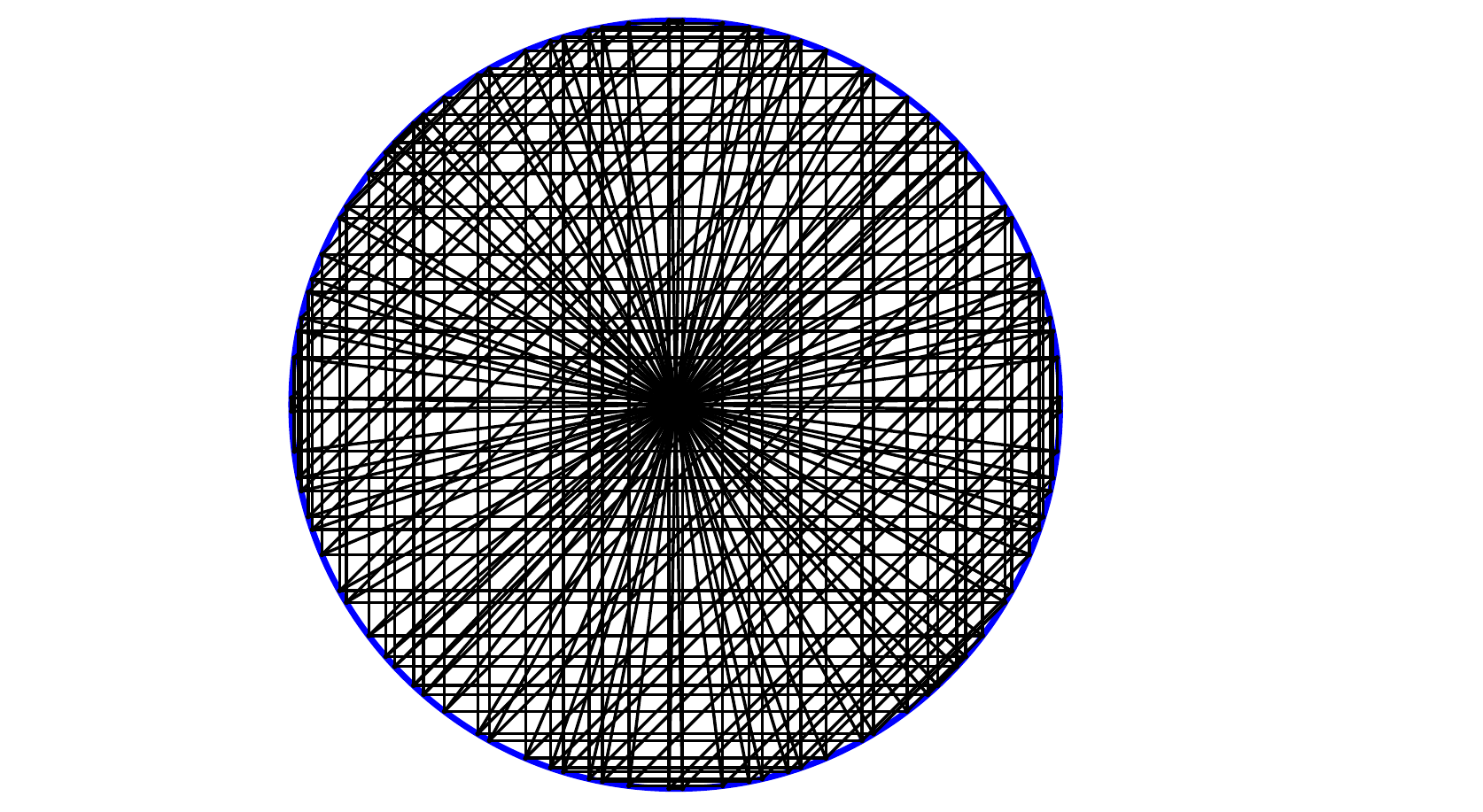}
\caption{Symmetries in the ova-angular rotation circle.}\label{Figura:circle12}
\end{figure}
\begin{figure}[ht]
\centering
\includegraphics[width=1.5\textwidth]{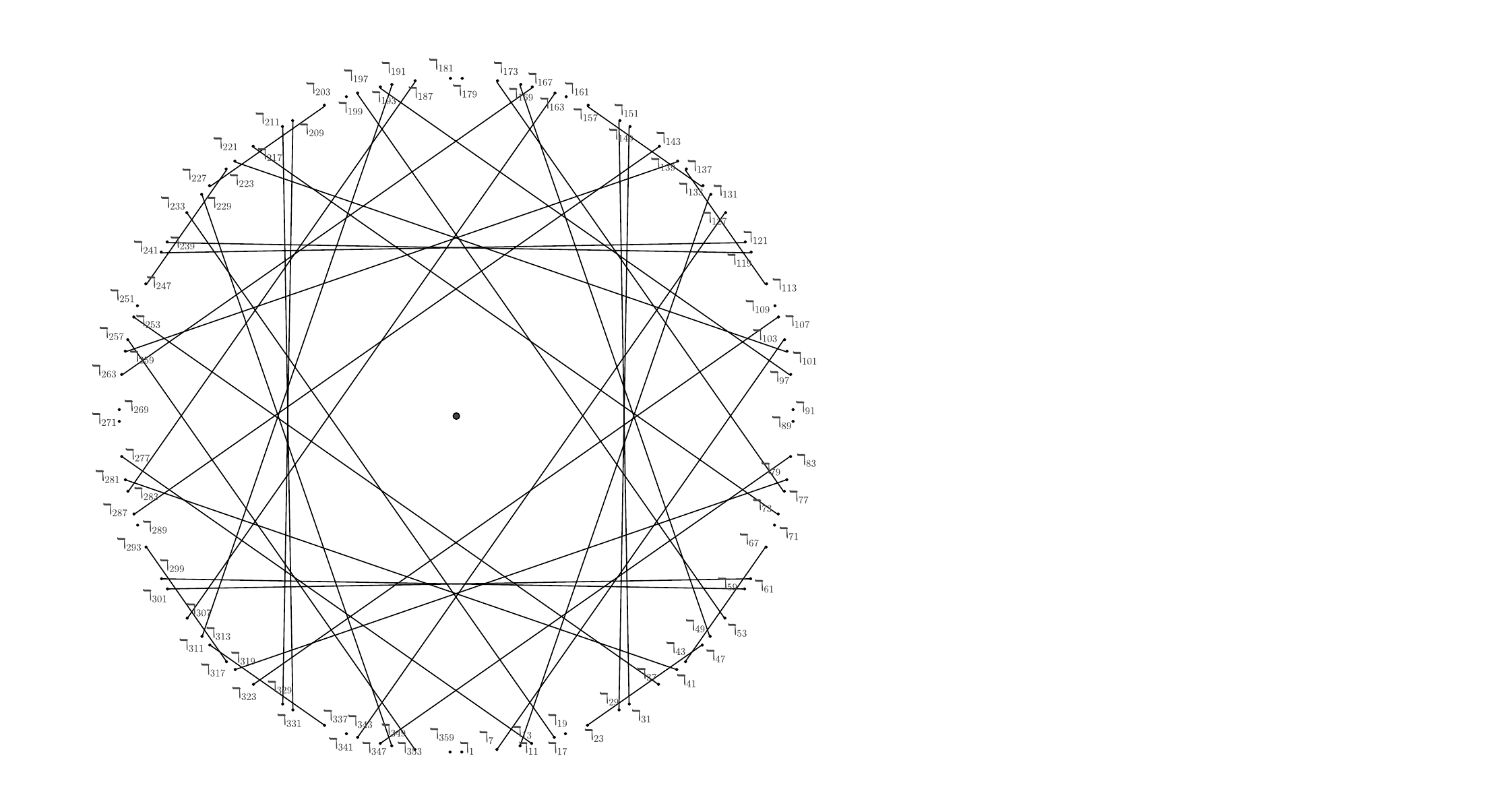}
\caption{Ova-inverse connection.}\label{Figura:circleInversos}
\end{figure}
\clearpage

All the previous figures are regrouped in the following theorem.

\begin{teor} [Generating function] \definecolor{Micolor4}{gray}{1.0}
\textcolor{Micolor4}{.}\newline
\textbf{A) Particular lines:}
Each one of $\,\daleth \in C$ positioned on the particular lines is a coefficient on the Taylor expansion of the generating function:
$$G_n(a_n)(z):=\frac{7z^2+16z+7}{z^3-z^2-z+1} \,\,;\,\,\,\,\,\,a_n=\frac{1}{2}\left ( 30n+(-1)^n-15 \right )  \mod(360).$$
\textbf{B) Twin lines:}
Each one of $\,\daleth \in C$ positioned on the twin lines is a coefficient on the Taylor expansion of the generating function:
$$G_n(b_n)(z):=\frac{11+2z+4z^2+2z^3+10z^4+2z^5-z^6}{(z-1)^2(1+z+z^2+z^3+z^4+z^5)} \,\,;\,\,\,\,\,\,b_n \mod(360).$$

\textbf{C) Generating function:}
Each one of $\,\daleth \in C$ positioned on the rotation circle is a cyclical coefficient on the Taylor expansion of the generating function

$$G_n(c_n)(z):=\frac{7+4z+2z^2+4z^3+2z^4+4z^5+6z^6+2z^7-z^8}{(z-1)^2(1+z+z^2+z^3+z^4+z^5+z^6+z^7)} \,\,;\,\,\,\,\,\,c_n \mod(360).$$
\end{teor}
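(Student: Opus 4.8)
The plan is to recognize all three rational functions as ordinary generating functions of increasing integer sequences that enumerate a sub‑family of the reduced residues modulo $360$, and then to reduce everything to one arithmetic fact together with routine polynomial algebra; items A), B), C) are strictly parallel, so I would treat them together. First I would isolate the only input that matters. Since $360=2^{3}\cdot 3^{2}\cdot 5$ has radical $30$, an integer is coprime to $360$ iff it is coprime to $30$, a condition depending only on its class modulo $30$; hence $S=\{n\in\mathbb{N}:\gcd(n,30)=1\}$ is invariant under $n\mapsto n+30$ and meets every block of $30$ consecutive positive integers in exactly $\varphi(30)=8$ points. Writing $S=\{u_{0}<u_{1}<u_{2}<\cdots\}$, so that $u_{0},\dots,u_{7}=1,7,11,13,17,19,23,29$, this forces
\[
u_{k+8}=u_{k}+30\qquad(k\ge 0),
\]
because the eight members of $S$ in $\{u_{k}+1,\dots,u_{k}+30\}$ are precisely $u_{k+1},\dots,u_{k+8}$, the largest being $u_{k}+30$. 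Recall that $C=C^{*}\setminus\{2,3,5\}$ is the reduced residue system modulo $360$, i.e.\ the residues in $\{1,\dots,359\}$ coprime to $360$, so $C$ is the set of residues modulo $360$ of $u_{1},u_{2},\dots$. The same window argument, applied to sub‑progressions, identifies the twin‑line residues — those $\daleth\in C$ with $\daleth-2\in C$ or $\daleth+2\in C$, equivalently (a one‑line CRT check) $\daleth\bmod 30\in\{1,11,13,17,19,29\}$ — enumerated increasingly as $v_{0},v_{1},\dots$ with $v_{k+6}=v_{k}+30$ and $v_{0},\dots,v_{5}=1,11,13,17,19,29$; and the particular‑line residues — those $\daleth\in C$ on no twin line, i.e.\ $\daleth\bmod 30\in\{7,23\}$ — enumerated increasingly as $w_{0},w_{1},\dots$ with $w_{k+2}=w_{k}+30$ and $w_{0},w_{1}=7,23$.

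Next I would record the identity behind all three formulas: if $(s_{k})_{k\ge 0}\subset\mathbb{Z}$ satisfies $s_{k+p}=s_{k}+30$, then $(1-z^{p})\sum_{k\ge 0}s_{k}z^{k}=\sum_{k=0}^{p-1}s_{k}z^{k}+30z^{p}/(1-z)$, hence
\[
\sum_{k\ge 0}s_{k}z^{k}=\frac{(1-z)\sum_{k=0}^{p-1}s_{k}z^{k}+30z^{p}}{(z-1)^{2}\,(1+z+\cdots+z^{p-1})}.
\]
Substituting $(s,p)=(w,2),(v,6),(u,8)$ with the initial data above — the term $30z^{p}$ cancelling the $z^{p}$ coming from $-z\cdot s_{p-1}z^{p-1}$ — the numerators collapse to $7z^{2}+16z+7$, to $1+10z+2z^{2}+4z^{3}+2z^{4}+10z^{5}+z^{6}$, and to $1+6z+4z^{2}+2z^{3}+4z^{4}+2z^{5}+4z^{6}+6z^{7}+z^{8}$, respectively.

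For A) this is already the asserted function, since $(z-1)^{2}(1+z)=z^{3}-z^{2}-z+1$, and the closed form $w_{k}=\tfrac12\bigl(30k+15-(-1)^{k}\bigr)$ of a sequence with $w_{k+2}=w_{k}+30$, $w_{0},w_{1}=7,23$, is the stated $a_{n}$ after the reindexing $a_{n}=w_{n-1}$. For B) and C) the stated functions are the generating functions of the \emph{once‑shifted} sequences $v_{1},v_{2},\dots$ and $u_{1},u_{2},\dots$ (they start at the second reduced residue, $11$ and $7$, not at $1$), so one subtracts the constant term and divides by $z$; one polynomial division in each case then produces exactly
\[
\frac{11+2z+4z^{2}+2z^{3}+10z^{4}+2z^{5}-z^{6}}{(z-1)^{2}(1+z+z^{2}+z^{3}+z^{4}+z^{5})}
\quad\text{and}\quad
\frac{7+4z+2z^{2}+4z^{3}+2z^{4}+4z^{5}+6z^{6}+2z^{7}-z^{8}}{(z-1)^{2}(1+z+z^{2}+z^{3}+z^{4}+z^{5}+z^{6}+z^{7})},
\]
matching the theorem. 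Finally, for the word \emph{cyclical}: iterating $s_{k+p}=s_{k}+30$ twelve times gives $s_{k+12p}=s_{k}+360\equiv s_{k}\pmod{360}$, so the coefficient sequences are periodic modulo $360$ with periods $24,72,96$, and over one period their reductions run, each exactly once, through the $24$ units $\equiv 7,23\pmod{30}$, the $72$ twin‑line units, and all $96$ units of $C$ — which is precisely the claim that every $\daleth\in C$ on the indicated lines occurs as a (cyclical) Taylor coefficient of the corresponding $G_{n}$.

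I expect the only real friction to be bookkeeping rather than a genuine obstacle: one must keep track of the fact that (B) and (C) are stated for the sequences shifted by one term, so the division by $z$ cannot be dropped, and one must pin the informal terms \emph{twin line} and \emph{particular line} down to the residue classes $\{1,11,13,17,19,29\}$ and $\{7,23\}$ modulo $30$ — reading them off the figures, or, better, characterizing them intrinsically as above — so that the statement is well posed. Once those conventions are fixed, everything rests on the single identity $u_{k+8}=u_{k}+30$ and a few lines of polynomial arithmetic.
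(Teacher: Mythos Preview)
Your argument is correct and complete. The paper's own proof is a single sentence asserting that the generating functions can be obtained ``using different methods'' with a citation; it offers no details beyond that. You take the same route in spirit---enumerate the relevant residues in increasing order and compute the ordinary generating function---but you actually carry it out, and in doing so you supply the structural reason the formulas look the way they do: since $\gcd(n,360)=1\iff\gcd(n,30)=1$, the set $C$ is a union of residue classes modulo $30$, so every enumerating sequence satisfies $s_{k+p}=s_k+30$ and the single identity
\[
\sum_{k\ge 0}s_k z^k=\frac{(1-z)\sum_{k=0}^{p-1}s_k z^k+30z^p}{(z-1)^2(1+z+\cdots+z^{p-1})}
\]
handles all three cases at once. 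Your observation that parts B) and C) correspond to the once-shifted sequences (starting at $11$ and $7$ rather than at $1$) is exactly right and explains the $-z^6$ and $-z^8$ terms in the numerators; the paper does not mention this indexing convention at all. The only thing worth tightening is the identification of the twin and particular lines with the residue classes $\{1,11,13,17,19,29\}$ and $\{7,23\}$ modulo $30$: in the paper these are defined only by reference to figures, so you should state explicitly that you are taking ``$\daleth$ lies on a twin line'' to mean $\daleth\pm 2\in C$ (interpreted on the circle $\mathbb{Z}/360\mathbb{Z}$, so that $1$ and $359$ form a pair), after which your mod-$30$ characterization follows immediately.
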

\begin{proof}
By independently estimating the succession of the particular lines and the twin lines, it is easy to obtain more information about them by obtaining their respective genera\-ting function using different methods \cite{NARUSE2018197}.
\end{proof}

\subsection{\textbf{Aplications}}\begin{teor}
For all frequency $\Game \in \mathbb{N}$, there is a prime $ \rho$ and therefore an ova $ \daleth_{\rho} \in C$ such that $\rho$ has a frequency $\Game \leq \Game_ 
{\rho} \leq 2 \Game + 1 $.
\end{teor}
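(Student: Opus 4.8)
The plan is to convert the frequency bound into the existence of a prime in an explicit window and then invoke Bertrand's postulate. By Definition~\ref{Def:Frecuencia} we have $\Game_\rho=\left\lfloor \rho/360\right\rfloor$, and since $\Game$ is an integer, the double inequality $\Game\le\Game_\rho\le 2\Game+1$ is equivalent to $360\Game\le\rho<720\Game+720$. So it suffices to produce, for each $\Game\in\mathbb{N}$, a prime $\rho$ lying in the interval $[\,360\Game,\ 720\Game+720\,)$; by Definition~\ref{def1} such a $\rho$ automatically carries a well-defined residue $\daleth_\rho$, and it only remains to check that $\daleth_\rho\in C$.

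First I would dispose of the degenerate case $\Game=0$ (if $0\in\mathbb{N}$ in the paper's convention): the window is $[0,720)$, and any prime $\rho$ with $5<\rho<720$, say $\rho=7$, gives $\Game_\rho=0\in[0,1]$ and $\daleth_\rho=7\in C$. For $\Game\ge 1$, put $n=360\Game$; then $n\ge 360>1$, so Bertrand's postulate yields a prime $\rho$ with $n<\rho<2n$, i.e. $360\Game<\rho<720\Game$. From $\rho>360\Game$ we get $\Game_\rho=\left\lfloor\rho/360\right\rfloor\ge\Game$, and from $\rho<720\Game=360\cdot 2\Game$ we get $\Game_\rho\le 2\Game-1\le 2\Game+1$; hence $\Game\le\Game_\rho\le 2\Game+1$. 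Moreover $\rho>360>5$, so $\daleth_\rho\notin\{2,3,5\}$ (any such equality would force $\rho\in\{2,3,5\}$ since $\rho\equiv\daleth_\rho\pmod{360}$), and therefore $\daleth_\rho\in C$, which finishes the argument.

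Once Bertrand's postulate is granted, the rest is just bookkeeping with the floor function, so the sole obstacle is that one genuinely needs an effective prime-gap estimate of the shape $p_{k+1}<2p_k$. One could try to remain inside the paper's own machinery — the factorial-gap intervals $I_n$ of Theorem~\ref{teor intervalos1} together with Corollary~\ref{corol intervalos4} — but those only guarantee primes inside intervals that grow super-exponentially, which is far too coarse to trap $\Game_\rho$ between $\Game$ and $2\Game+1$. Hence appealing to Bertrand's postulate (or a Chebyshev-type bound) is the natural and essentially necessary route here.
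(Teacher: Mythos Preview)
Your proof is correct and follows essentially the same approach as the paper's own proof, which likewise invokes Bertrand's postulate and notes that the frequency bounds follow immediately. You have simply filled in the bookkeeping (the floor-function inequalities, the $\Game=0$ case, and the verification that $\daleth_\rho\in C$) that the paper leaves implicit.
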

\begin{proof}
Bertrand's theorem states that for every positive integer $n>1$ there is a prime $\rho$ such that $n<\rho<2n$, then the proof is immediate when calculating the frequencies of rotations.
\end{proof}

\begin{teor} [Ova-angular of primes of the form $n^2+1$.]
Let $C_{n^2+1}$ be the set formed by the Ova-angular $\daleth_\rho$ of the primes of the form $n^2 + 1$.
Then:
$$C_{n^2+1}=\{ 1, 2, 5, 17, 37, 41, 77, 101, 137, 161, 181, 197, 217, 221, 257, 281, 317, 341 \}.$$
\end{teor}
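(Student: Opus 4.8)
The statement asserts an explicit equality of two sets: the set $C_{n^2+1}$ of ova-angular residues attached to primes of the form $n^2+1$, and a concrete list of $18$ values. The plan is to prove the two inclusions separately. For the inclusion $C_{n^2+1}\subseteq\{1,2,5,\dots,341\}$, the key observation is that by equation \eqref{ecprincipal} the ova-angular residue $\daleth_\rho$ depends only on $\rho\bmod 360$, so if $\rho=n^2+1$ then $\daleth_\rho\equiv n^2+1\pmod{360}$ is completely determined by $n\bmod 360$. Hence I would compute the finite list of values $\{\,(n^2+1)\bmod 360 : n=0,1,\dots,359\,\}$; this is a routine finite check and produces exactly a set of candidate residues. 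One then discards those candidates that can \emph{never} correspond to an actual prime: a residue $r$ with $\gcd(r,360)>1$ can only be realized by a prime if $r$ itself is prime (namely $r\in\{2,3,5\}$), by the note preceding Definition \ref{def1} together with Dirichlet's theorem; checking which surviving residues are divisible by $2$, $3$, or $5$ trims the list down to the claimed $18$ elements. (For instance $n$ even forces $n^2+1$ odd, and a short case analysis mod $8$, mod $9$, mod $5$ handles the rest.)

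For the reverse inclusion $\{1,2,5,\dots,341\}\subseteq C_{n^2+1}$, I must exhibit, for each of the $18$ listed residues $r$, at least one prime $\rho$ of the form $n^2+1$ with $\daleth_\rho=r$. The cleanest route is simply to display a witness for each: e.g. $2=1^2+1$ gives $\daleth_2=2$; $5=2^2+1$ gives $\daleth_5=5$; $17=4^2+1$ gives $\daleth_{17}=17$; $37=6^2+1$, $41$ is not of that form but $197=14^2+1$ gives $\daleth_{197}=197$, and so on — each listed value is the ova-angular residue of some small prime $n^2+1$, and one tabulates $n$, the prime $n^2+1$, its frequency $\Game_\rho=\lfloor (n^2+1)/360\rfloor$, and $\daleth_\rho$. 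Since only finitely many witnesses are needed and all are small, this is a direct verification.

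The main obstacle is conceptual rather than computational: the set $C_{n^2+1}$ as defined ranges over \emph{all} primes of the form $n^2+1$, and it is a famous open problem (Landau's fourth problem, alluded to in the abstract) whether there are infinitely many such primes. So a priori one cannot be sure that the residue classes not yet realized by a small prime will never be realized by some enormous prime $n^2+1$ still to be discovered. The resolution is exactly the gcd argument above: the residues appearing in $\{(n^2+1)\bmod 360\}$ that are coprime to $360$ are few, and I should check that each of those few is actually hit by a small prime, while every residue sharing a factor with $360$ is provably unrealizable (except $2,5$). Thus the theorem is really the assertion that the ``reachable'' residues coincide with the ``already-reached-by-small-primes'' residues, and the proof reduces to two finite tables plus the elementary divisibility filter; I would present it in that order, flagging the divisibility filter as the step that does the genuine work.
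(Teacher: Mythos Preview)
Your proposal is correct and follows essentially the same route as the paper: reduce the forward inclusion to the observation that $\daleth_\rho-1\equiv k^2\pmod{360}$ for some $k<360$, enumerate the quadratic residues mod $360$, and intersect with the admissible residue set $C^{*}$. The paper compresses all of this into the single sentence ``Manually checking, for all $r<360$, we obtain all $\daleth_\rho$ that satisfies the modular congruence,'' leaving both the $\gcd$ filter and the reverse inclusion implicit; your version is more scrupulous in separating the two inclusions and in insisting on explicit prime witnesses $n^2+1$ for each of the $18$ residues (for instance $401=20^2+1$ realizes $\daleth=41$, $5477=74^2+1$ realizes $\daleth=77$, and so on), which the paper supplies only indirectly via the later Table~\ref{Table:vinculos}.
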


\begin{proof}
Let $\rho$ be a prime number such that $\rho=k^2+1$ with $k$ integer. It is clear that $\daleth_\rho-1 \equiv k^2 \pmod{360}$ then $\daleth_{\rho}-1 \equiv r^2 \pmod{360}$ for some $r<360$. Manually checking, for all $r<360$, we obtain all $\daleth_\rho$ that satisfies the modular congruence.
\end{proof}

Figure \ref{Figura:Landau} shows the connection of the residuals $\daleth_{n^2 +1}\neq 2,5$ in the rotation circle. It is worth highlighting the reflective symmetry that the figure presents.
\clearpage

\begin{figure}[hbt!]
\centering
\includegraphics[width=1.7\textwidth]{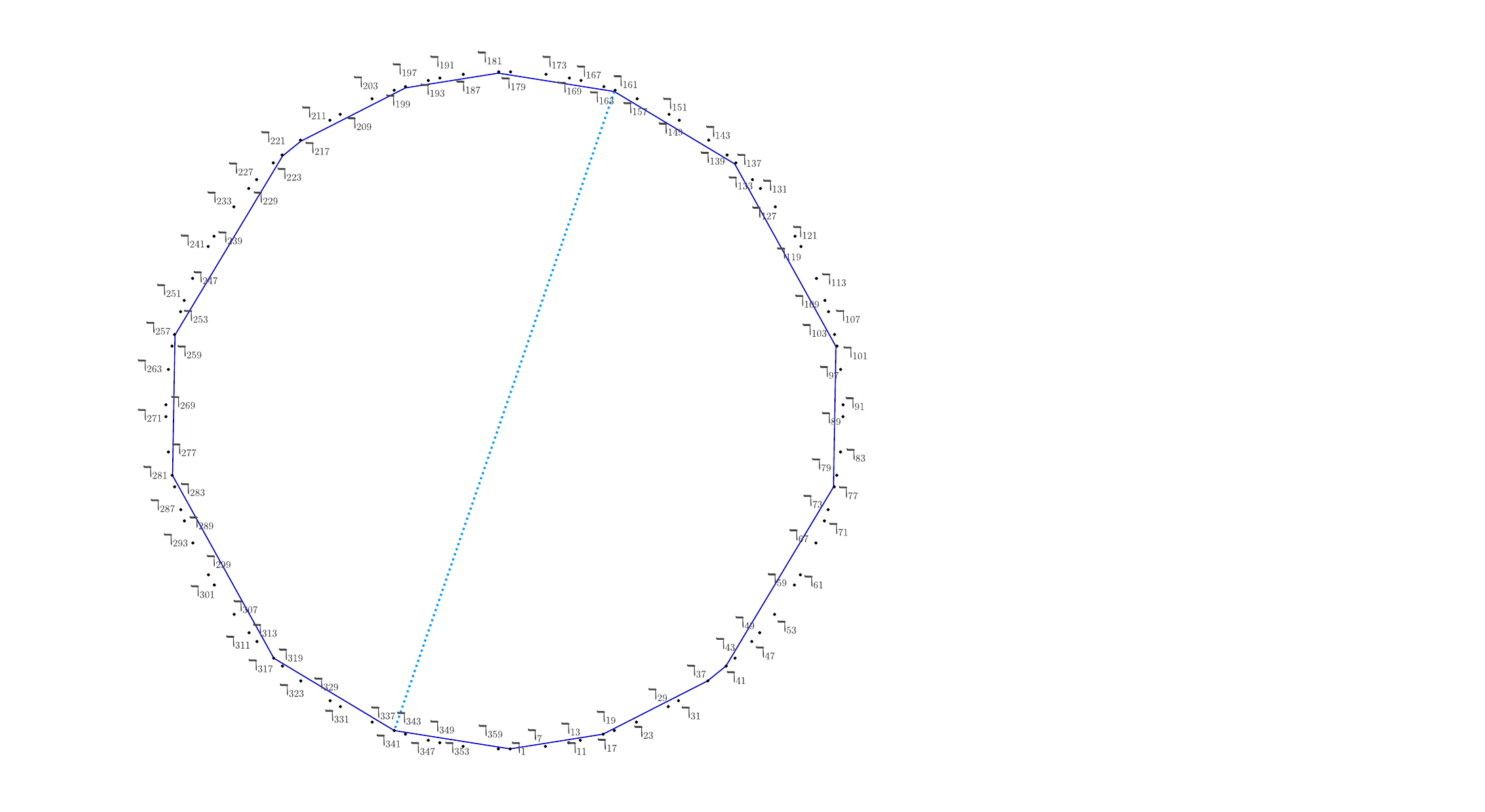}
\caption{\,\,\,\,\,\,Ova - $\daleth_{n^2+1}$ conection.}\label{Figura:Landau}
\end{figure}

\begin{teor}
There are infinite primes $\rho$, such that $\rho = k^2+1$ for some integer $k$.
\end{teor}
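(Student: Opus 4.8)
The plan is to use the ova-angular decomposition to reduce the assertion to a statement about a single quadratic progression, and then to examine that progression. First I would fix an ova-angular residue $\daleth \in C_{n^2+1}$ with $\gcd(\daleth, 360) = 1$; inspecting the list supplied by the previous theorem, every element other than $\daleth = 2$ and $\daleth = 5$ has this property. For such a $\daleth$, Dirichlet's theorem guarantees that the arithmetic progression $\{\, 360 j + \daleth : j \in \mathbb{N}\,\}$ contains infinitely many primes, and any prime of the form $k^2 + 1$ whose ova-angular residue equals $\daleth$ necessarily lies in this progression.

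Second, I would study the companion set on the direction axis, namely $S_\daleth = \{\, k \in \mathbb{N} : k^2 + 1 \cong \daleth \pmod{360}\,\}$. Since $\daleth \in C_{n^2+1}$, the quadratic congruence $k^2 \cong \daleth - 1 \pmod{360}$ is solvable, so by the Chinese remainder theorem $S_\daleth$ is a nonempty finite union of residue classes $k \cong k_0 \pmod{360}$. Fixing one such representative $k_0$ and writing $k = 360 m + k_0$ gives $k^2 + 1 = 360^2 m^2 + 720\, k_0\, m + (k_0^2 + 1)$, a quadratic polynomial in $m$ that has positive leading coefficient, is irreducible over $\mathbb{Q}$, and (because $k_0^2 + 1 \cong \daleth \pmod{360}$ is coprime to $360 = 2^3 \cdot 3^2 \cdot 5$ and reduction modulo any prime $p \geq 7$ leaves a genuine, not identically zero, quadratic) has no fixed prime divisor. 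The theorem is thus equivalent to the claim that this quadratic takes prime values for infinitely many $m$.

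The hard part is exactly this last claim, and it is the crux of the whole theorem rather than a routine finish. Dirichlet's theorem settles the linear case, and the ova-angular machinery carries out the reduction above cleanly, but no analogue of Dirichlet is known in degree two: ``$360^2 m^2 + 720 k_0 m + (k_0^2 + 1)$ is prime infinitely often'' is a special case of Bunyakovsky's conjecture, and the plain form $k^2 + 1$ is Landau's fourth problem, both still open. The strongest unconditional result in this direction is Iwaniec's theorem that $k^2 + 1$ has infinitely many values with at most two prime factors; known sieve methods cannot be sharpened to one prime factor without overcoming the parity obstruction. I would therefore isolate this step as the one that the present elementary framework does not resolve: a complete proof would require importing a parity-breaking sieve input or a genuinely new analytic ingredient, and without such an input the argument stops at the reduction and cannot be honestly declared complete.
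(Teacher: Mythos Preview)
Your reduction is essentially the same as the paper's: fix a residue $\daleth\in C_{n^2+1}$ coprime to $360$, parametrize the $k$ with $k^2+1\equiv\daleth\pmod{360}$ by a linear function $k=k_0+180\alpha$ (the paper chooses $\daleth=161$, $k_0=40$), and observe that the resulting values $k^2+1$ form a quadratic progression sitting inside the arithmetic progression $\daleth+360\Game$. Where you and the paper diverge is at the final step. You correctly isolate that step as an instance of Bunyakovsky's conjecture (equivalently Landau's fourth problem) and state that no known method, sieve or otherwise, closes it; you therefore decline to call the argument complete. That assessment is mathematically accurate.

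The paper, by contrast, does attempt to close the gap, and its closure is exactly the non-sequitur you anticipated. After exhibiting the parametrization $\rho_{n'}=32400\alpha^{2}+14400\alpha+1601=(180\alpha+40)^{2}+1$, the paper appeals to Dirichlet's theorem on $161+360\Game$ to conclude that some $\rho_{n'}$ with $n'>n$ is prime, writing ``Then there is $\rho_{n'}>\rho_n$. $\rightarrow\leftarrow$.'' But Dirichlet only guarantees infinitely many primes in the \emph{linear} progression $161+360\Game$; it says nothing about whether any of those primes fall on the sparse quadratic subsequence $\Game=90\alpha^{2}+40\alpha+4$. Indeed the paper's own Table~\ref{Table:vinculoinfinito} shows that most values of the quadratic are composite, and the argument supplies no mechanism to force infinitely many hits. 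So the contradiction is not established, and the ``proof'' does not prove the theorem. Your refusal to make this leap is the right call; the paper's version contains precisely the gap you identified, unacknowledged.
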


In accordance with the original Landau statement\footnote{Weisstein, Eric W. "Landau's Problems". From MathWorld--A Wolfram Web Resource http://mathworld.wolfram.com/LandausProblems.html}, it is interesting to demonstrate if the list $2, 5, 17, 37, 101, 197, 257,$\\
$401, 577, 677, 1297, 1601,...$ OEIS A$002496$ is infinite or finite.

\begin{proof}
For this proof, we are going to take an element from the list and from this and its ova-angular residue it will be shown that they are indeed infinite.\\

Let us arbitrarily take the prime number $1601=\daleth_{161} +360(4)$ this is $40^2+1=161+360(4)$. Now thinking about $n$ rotations, let us reason for the absurd (\emph{reduction to the absurd}), suppose that there is a last prime $\rho_n =161+360 (4+n)$ that is of the form $k^{2}+1$, that is, $\rho_n$ is the last ova-angular prime of $161$ which is $1$ greater than a perfect square.\\

If it is possible to build a prime number that also meets the desired condition under the same ova-angular, we can reach some contradiction.\\

As $40^2+1+360n=161+360(4+n)$, from this expression, we have that $40^2+360n=\rho_{n}-1$ or better yet, there is an integer $ k $ such that:
$$40^2+360n=k^2,$$
$$n=\frac{(k+40)(k-40)}{360}.$$
Then, under this procedure, if there is a function $k(\alpha)$ and $n(\alpha)$ of link between $k$ and $n$, with parameter $\alpha$ integer, then there will be infinite solutions to $n$ in the rotation system $\rho =161+360(4+n)$ and therefore, since there are infinite primes in the arithmetic progression $161+30(\Game)$, the proof would be complete. To understand the above consider the following: \\

Let ${n}'>n$ and $z={n}'-n$ ; $z \in \mathbb{Z}$. Then $161+360({n}')=161+360(z+n).$\\

It is clear that if $z=4$ then $161+360({n}')=\rho_n$.\\

Also if ${n}'$ (and hence $z$) is increased by positive integers, then it is clear that by Dirichlet's theorem on arithmetic progressions $161 + 360({n}') = 161+360 (z+n)$ must produce infinite prime numbers for certain arbitrary values of $z$. Let the values of ${n}'$ be such that $161+360({n}')= \rho_{n}'$ is a prime number. Are there arbitrary values for $z$ such that the prime numbers formed by ${n}'$ are of the form $k^2 +1$? indeed, let $z(\alpha)=90\alpha^2+40\alpha-n+4$ ; with $\alpha=0,1,2,3,4,...$. With this we have:
$k(\alpha)=180\alpha+40$ and $n(\alpha)=90\alpha^2+40\alpha$ for all $\alpha$ integer, since:
$$161+360(z+n)=\rho_{{n}'}=161+360(90\alpha^2+40\alpha-n+4+n)$$
$$=161+32400\alpha^2+14400\alpha+1440=1+(180\alpha+40)^2,$$

$$\rho_{{n}'}=32400\alpha^2+14400\alpha+1601=1+(180\alpha+40)^2 ; \,\,\alpha=0,1,2,3,...\,\text{.}$$

Then there is $\rho_n'>\rho_n$.\,\,\,$\rightarrow \leftarrow$
\end{proof}

The Table \ref{Table:vinculoinfinito} presents some of the results obtained that show the formation of primes $\rho=k^2+1$ under the link function for this ova-angular $\daleth_{161}$.\\

\begin{table}[ht]
\centering
\caption{Examples and link rotations between $ k(\alpha)$ and $n(\alpha)$ for $\daleth_{161}$ with $ n(\alpha)=\frac{k(\alpha)^2-1600}{360}$}
\begin{tabular}[b]{|l||*{5}{c|}}\hline 
$\alpha$&$k(\alpha)=180\alpha+40$&$n(\alpha)=90\alpha^2+40\alpha$&$\Game_{\rho}$&$k^2+1$&Prime Number\\\hline \hline
$0$&$40$&$0$&$4$&$1601$&\checkmark\\\hline
$1$&$220$&$130$&$134$&$48401$&not\\\hline
$2$&$400$&$440$&$444$&$160001$&\checkmark\\\hline
$3$&$580$&$930$&$934$&$336401$&not\\\hline
$4$&$760$&$1600$&$1604$&$577601$&\checkmark\\\hline
$5$&$940$&$2450$&$2454$&$883601$&not\\\hline
$6$&$1120$&$3480$&$3484$&$1254401$&not\\\hline
$7$&$1300$&$4690$&$4694$&$1690001$&not\\\hline
$8$&$1480$&$6080$&$6084$&$2190401$&not\\\hline
$9$&$1660$&$7650$&$7654$&$2755601$&\checkmark\\\hline
$10$&$1840$&$9400$&$9404$&$3385601$&not\\\hline
$11$&$2020$&$11330$&$11334$&$4080401$&not\\\hline
$12$&$2200$&$13440$&$13444$&$4840001$&not\\\hline
$13$&$2380$&$15730$&$15734$&$5664401$&\checkmark\\\hline
$14$&$2560$&$18200$&$18204$&$6553601$&not\\\hline
\end{tabular}
\label{Table:vinculoinfinito}
\end{table}

\textbf{Note $\textbf{4.}$} Baier and Zhao \cite{BAIERZHAO} present a summary of advances on primes in quadratic progressions and reference Kuhn \cite {Kuhn} proved that $n^2 + 1$ is the product of three primes and if there are infinite primes in this way, they should be able to be expressed in an irreducible polynomial $an^2+bn+c$ with $a>0$ and $c$ odd number. 
To date, not all of these expressions have been found, but with the ova-angular rotations the links functions between $k(\alpha)$ and $n(\alpha)$ are found, such that $\rho=k^2+1= an^2+bn+c$, with which we obtain all families of the progressions that Kuhn proposes that exist, as can be seen in Table \ref{Table:vinculos}.
\begin{table}[ht]
\centering
\caption{$a\alpha^2+b\alpha+c$ for all $\daleth_{n^2+1}$ with parameter $\alpha \in \mathbb{Z}$}\label{Table:vinculos}
\scalebox{0.64}{
\begin{tabular}[b]{|l||*{4}{c|}}\hline 
$\daleth_{n^2+1}$&$k(\alpha)$&$n(\alpha)$&$Link$&$a\alpha^2+b\alpha+c$\\\hline \hline
$1$&\begin{tabular}[c]{@{}c@{}}$A)\,180\alpha$\\ $B)\,180\alpha+120$\\$C)\,180\alpha-120$\end{tabular}&\begin{tabular}[c]{@{}c@{}}$90\alpha^2-40\alpha$\\ $90\alpha^2+120\alpha$\\$90\alpha^2-120\alpha$\end{tabular}&$n=\frac{k^2-120^2}{360}$&\begin{tabular}[c]{@{}c@{}}$32400\alpha^2+0\alpha+1$\\$32400\alpha^2+43200\alpha+14401$\\$32400\alpha^2-43200\alpha+14401$\end{tabular}\\\hline
$2$&$1$&$0$&$n=\frac{k^2-1^2}{360}$&$1^2+1=0\alpha^2+0\alpha+2$\\\hline
$5$&$2$&$0$&$n=\frac{k^2-2^2}{360}$&$2^2+1=0\alpha^2+0\alpha+5$\\\hline
$17$&\begin{tabular}[c]{@{}c@{}}$A)\,180\alpha+104$\\ $B)\,180\alpha+176$\\$C)\,180\alpha+4$\\$D)\,180\alpha-104$ \end{tabular}&\begin{tabular}[c]{@{}c@{}}$90\alpha^2+104\alpha+30$\\ $90\alpha^2+176\alpha+86$\\$90\alpha^2+4\alpha$\\$90\alpha^2-104\alpha+30$\end{tabular}&$n=\frac{k^2-4^2}{360}$&\begin{tabular}[c]{@{}c@{}}$32400\alpha^2+37440\alpha+10817$\\$32400\alpha^2+63360\alpha+30977$\\$32400\alpha^2+1440\alpha+17$\\$32400\alpha^2-37440\alpha+10817$\end{tabular}\\\hline
$37$&\begin{tabular}[c]{@{}c@{}}$A)\,360\alpha+54$\\ $B)\,360\alpha+126$\\$C)\,360\alpha+234$\\$D)\,360\alpha+306$\\$E)\,360\alpha+6$ \end{tabular}&\begin{tabular}[c]{@{}c@{}}$360\alpha^2+108\alpha+8$\\$360\alpha^2+252\alpha+44$\\$360\alpha^2+468\alpha+152$\\$360\alpha^2+612\alpha+260$\\$360\alpha^2+12\alpha$\end{tabular}&$n=\frac{k^2-6^2}{360}$&\begin{tabular}[c]{@{}c@{}}$129600\alpha^2+38880\alpha+2917$\\$129600\alpha^2+90720\alpha+15877$\\$129600\alpha^2+168480\alpha+54757$\\$129600\alpha^2+220320\alpha+93637$\\$129600\alpha^2+4320\alpha+37$\end{tabular}\\\hline
$41$&\begin{tabular}[c]{@{}c@{}}$A)\,180\alpha+160$\\ $B)\,180\alpha+20$ \end{tabular}&\begin{tabular}[c]{@{}c@{}}$90\alpha^2+160\alpha+70$\\$90\alpha^2+20\alpha$\end{tabular}&$n=\frac{k^2-20^2}{360}$&\begin{tabular}[c]{@{}c@{}}$32400\alpha^2+57600\alpha+25601$\\$32400\alpha^2+7200\alpha+401$\end{tabular}\\\hline
$77$&\begin{tabular}[c]{@{}c@{}}$A)\,180\alpha+34$\\ $B)\,180\alpha+74$\\$C)\,180\alpha+106$\\$D)\,180\alpha+146$ \end{tabular}&\begin{tabular}[c]{@{}c@{}}$90\alpha^2+34\alpha-12$\\ $90\alpha^2+74\alpha$\\$90\alpha^2+106\alpha+16$\\$90\alpha^2+146\alpha+44$\end{tabular}&$n=\frac{k^2-74^2}{360}$&\begin{tabular}[c]{@{}c@{}}$32400\alpha^2+12240\alpha+1157$\\$32400\alpha^2+26640\alpha+5477$\\$32400\alpha^2+38160\alpha+11237$\\$32400\alpha^2+52560\alpha+21317$\end{tabular}\\\hline
$101$&\begin{tabular}[c]{@{}c@{}}$A)\,180\alpha+10$\\ $B)\,180\alpha+170$ \end{tabular}&\begin{tabular}[c]{@{}c@{}}$90\alpha^2+10\alpha$\\$90\alpha^2+170\alpha+80$\end{tabular}&$n=\frac{k^2-10^2}{360}$&\begin{tabular}[c]{@{}c@{}}$32400\alpha^2+3600\alpha+101$\\$32400\alpha^2+61200\alpha+28901$\end{tabular}\\\hline
$137$&\begin{tabular}[c]{@{}c@{}}$A)\,180\alpha+64$\\ $B)\,180\alpha+136$\\$C)\,180\alpha-136$\\$D)\,180\alpha+116$ \end{tabular}&\begin{tabular}[c]{@{}c@{}}$90\alpha^2+64\alpha-26$\\ $90\alpha^2+136\alpha+14$\\$90\alpha^2-136\alpha+14$\\$90\alpha^2+116\alpha$\end{tabular}&$n=\frac{k^2-116^2}{360}$&\begin{tabular}[c]{@{}c@{}}$32400\alpha^2+23040\alpha+4097$\\$32400\alpha^2+48960\alpha+18497$\\$32400\alpha^2-48960\alpha+18497$\\$32400\alpha^2+41760\alpha+13457$\end{tabular}\\\hline
$161$&\begin{tabular}[c]{@{}c@{}}$A)\,180\alpha+40$\\ $B)\,180\alpha-40$ \end{tabular}&\begin{tabular}[c]{@{}c@{}}$90\alpha^2+40\alpha$\\$90\alpha^2-40\alpha$\end{tabular}&$n=\frac{k^2-40^2}{360}$&\begin{tabular}[c]{@{}c@{}}$32400\alpha^2+14400\alpha+1601$\\$32400\alpha^2-14400\alpha+1601$\end{tabular}\\\hline
$181$&\begin{tabular}[c]{@{}c@{}}$A)\,180\alpha+90$\\ $B)\,180\alpha+30$\\$C)\,180\alpha+150$\end{tabular}&\begin{tabular}[c]{@{}c@{}}$90\alpha^2+90\alpha$\\ $90\alpha^2+30\alpha-20$\\$90\alpha^2+150\alpha+40$\end{tabular}&$n=\frac{k^2-90^2}{360}$&\begin{tabular}[c]{@{}c@{}}$32400\alpha^2+32400\alpha+8101$\\$32400\alpha^2+10800\alpha+901$\\$32400\alpha^2+54000\alpha+22501$\end{tabular}\\\hline
$197$&\begin{tabular}[c]{@{}c@{}}$A)\,180\alpha+14$\\ $B)\,180\alpha+86$\\$C)\,180\alpha+94$\\$D)\,180\alpha+166$ \end{tabular}&\begin{tabular}[c]{@{}c@{}}$90\alpha^2+14\alpha$\\ $90\alpha^2+86\alpha+20$\\$90\alpha^2+94\alpha+24$\\$90\alpha^2+166\alpha+76$\end{tabular}&$n=\frac{k^2-14^2}{360}$&\begin{tabular}[c]{@{}c@{}}$32400\alpha^2+5040\alpha+197$\\$32400\alpha^2+30960\alpha+7397$\\$32400\alpha^2+33840\alpha+8837$\\$32400\alpha^2+59760\alpha+27557$\end{tabular}\\\hline
$217$&\begin{tabular}[c]{@{}c@{}}$A)\,360\alpha+144$\\ $B)\,360\alpha+216$\\$C)\,360\alpha+36$\\$D)\,360\alpha+324$\\$E)\,360\alpha+24$ \end{tabular}&\begin{tabular}[c]{@{}c@{}}$360\alpha^2+288\alpha+54$\\$360\alpha^2+432\alpha+126$\\$360\alpha^2+72\alpha$\\$360\alpha^2+648\alpha+288$\\$360\alpha^2+48\alpha-2$\end{tabular}&$n=\frac{k^2-36^2}{360}$&\begin{tabular}[c]{@{}c@{}}$129600\alpha^2+103680\alpha+20737$\\$129600\alpha^2+155520\alpha+46657$\\$129600\alpha^2+25920\alpha+1297$\\$129600\alpha^2+233280\alpha+104977$\\$129600\alpha^2+17280\alpha+577$\end{tabular}\\\hline
$221$&\begin{tabular}[c]{@{}c@{}}$A)\,180\alpha+70$\\ $B)\,180\alpha+110$ \end{tabular}&\begin{tabular}[c]{@{}c@{}}$90\alpha^2+70\alpha-20$\\$90\alpha^2+110\alpha$\end{tabular}&$n=\frac{k^2-110^2}{360}$&\begin{tabular}[c]{@{}c@{}}$32400\alpha^2+25200\alpha+4901$\\$32400\alpha^2+39600\alpha+12101$\end{tabular}\\\hline
$257$&\begin{tabular}[c]{@{}c@{}}$A)\,180\alpha+16$\\ $B)\,180\alpha+56$\\$C)\,180\alpha-56$\\$D)\,180\alpha-16$ \end{tabular}&\begin{tabular}[c]{@{}c@{}}$90\alpha^2+16\alpha$\\ $90\alpha^2+56\alpha+8$\\$90\alpha^2-56\alpha+8$\\$90\alpha^2-16\alpha$\end{tabular}&$n=\frac{k^2-16^2}{360}$&\begin{tabular}[c]{@{}c@{}}$32400\alpha^2+5760\alpha+257$\\$32400\alpha^2+20160\alpha+3137$\\$32400\alpha^2-20160\alpha+3137$\\$32400\alpha^2-5760\alpha+257$\end{tabular}\\\hline
$281$&\begin{tabular}[c]{@{}c@{}}$A)\,180\alpha+260$\\ $B)\,180\alpha-80$ \end{tabular}&\begin{tabular}[c]{@{}c@{}}$90\alpha^2+260\alpha$\\$90\alpha^2-80\alpha-170$\end{tabular}&$n=\frac{k^2-260^2}{360}$&\begin{tabular}[c]{@{}c@{}}$32400\alpha^2+93600\alpha+67601$\\$32400\alpha^2-28800\alpha+6401$\end{tabular}\\\hline
$317$&\begin{tabular}[c]{@{}c@{}}$A)\,180\alpha+26$\\ $B)\,180\alpha+46$\\$C)\,180\alpha+134$\\$D)\,180\alpha+154$ \end{tabular}&\begin{tabular}[c]{@{}c@{}}$90\alpha^2+26\alpha$\\ $90\alpha^2+46\alpha+4$\\$90\alpha^2+134\alpha+48$\\$90\alpha^2+154\alpha+64$\end{tabular}&$n=\frac{k^2-26^2}{360}$&\begin{tabular}[c]{@{}c@{}}$32400\alpha^2+9360\alpha+677$\\$32400\alpha^2+16560\alpha+2117$\\$32400\alpha^2+48240\alpha+17957$\\$32400\alpha^2+55440\alpha+23717$\end{tabular}\\\hline
$341$&\begin{tabular}[c]{@{}c@{}}$A)\,180\alpha+50$\\ $B)\,180\alpha+130$ \end{tabular}&\begin{tabular}[c]{@{}c@{}}$90\alpha^2+50\alpha-40$\\$90\alpha^2+130\alpha$\end{tabular}&$n=\frac{k^2-130^2}{360}$&\begin{tabular}[c]{@{}c@{}}$32400\alpha^2+18000\alpha+2501$\\$32400\alpha^2+46800\alpha+16901$\end{tabular}\\\hline \hline
\end{tabular}}
\end{table}

\clearpage

\begin{teor}[Goldbach's Conjecture]

All even numbers greater than $2$ can be written as the sum of two prime numbers.
\end{teor}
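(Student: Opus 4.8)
The plan is to push the problem down to a single congruence class modulo $360$ via equation \eqref{ecprincipal}, and then to isolate precisely the analytic obstruction that survives. Let $2n>2$ be even and write $2n\equiv r\pmod{360}$ with $0\le r<360$. If $2n=\rho_1+\rho_2$ with $\rho_1,\rho_2\in\mathbb{P}$, then by \eqref{ecprincipal} we must have $\daleth_{\rho_1}+\daleth_{\rho_2}\equiv r\pmod{360}$. Step one is therefore purely combinatorial: verify that for every even $r$ with $0\le r<360$ there is a pair $(a,b)\in C^{*}\times C^{*}$ with $a+b\equiv r\pmod{360}$, and --- so that Dirichlet's theorem will be available later --- that such a pair can be chosen with $a,b\in A$, the classes carrying infinitely many primes. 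Since $|C^{*}|=99$ and $|A|=72$, this is a finite check over at most $99^{2}$ ordered pairs; I would halve the labour using the involution $a\mapsto 360-a$ on $C$ established earlier, which sends a decomposition of $r$ to one of $720-r\equiv 360-r\pmod{360}$.

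With a preferred pair of classes $(a_r,b_r)$ fixed for each even $r$, step two restricts attention to primes $\rho_1$ in the arithmetic progression $a_r+360\,\mathbb{N}$ and asks whether $2n-\rho_1$ is prime. Because $2n-\rho_1\equiv r-a_r\equiv b_r\pmod{360}$, the complement already lies in the correct ova-angular class, so the only remaining demand is primality. Writing $\rho_1=a_r+360j$, the task becomes: exhibit $j$ with $0<a_r+360j<2n$ such that both $a_r+360j$ and $2n-a_r-360j$ are prime. Dirichlet's theorem supplies infinitely many primes of the first shape, and the ova-angular reduction has removed every local obstruction modulo $2,3,5$, hence modulo $360$; this is the genuine contribution of the present framework.

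The hard part will be step three: proving that such a $j$ must exist for \emph{every} $n$. Heuristically the number of admissible $j$ is, by the Hardy--Littlewood prediction, of order $2n/(\log 2n)^{2}$ times a positive singular series, hence enormous --- but this count is exactly the binary Goldbach asymptotic, which is not known unconditionally. Dirichlet's theorem governs only the \emph{density} of primes within one progression; it says nothing about the additive coincidence of two progressions, and no bookkeeping with the $99$ residue classes can manufacture that coincidence. A rigorous conclusion would require a circle-method lower bound for $\#\{(\rho_1,\rho_2)\in\mathbb{P}^{2}:\rho_1+\rho_2=2n,\ \daleth_{\rho_1}=a_r,\ \daleth_{\rho_2}=b_r\}$, and the minor-arc estimates needed for such a bound are of exactly the strength of an unconditional proof of binary Goldbach. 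Thus steps one and two are routine within the machinery already built in this paper, whereas step three is, as far as I can see, the classical conjecture itself in ova-angular clothing.
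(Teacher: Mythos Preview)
Your proposal is not a proof and you say so yourself: step three is left standing as ``the classical conjecture itself in ova-angular clothing.'' What you have written is a clear diagnosis of where the analytic difficulty lies --- the ova-angular reduction removes every local obstruction modulo $360$ but cannot by itself force the required additive coincidence of two primes --- rather than an argument that overcomes it. That is the gap, and you have located it precisely.

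The paper's argument takes a genuinely different route. It proceeds by reductio ad absurdum on a minimal counterexample $n$: since $n-2$ \emph{is} a sum of two primes one has $n=\rho_i+\rho_j+2$, and after disposing of small cases the paper applies Helfgott's ternary Goldbach theorem to $n+1$ to obtain $n=\rho_a+\rho_b+\rho_c-1$. A chain of congruence manipulations modulo $360$, together with the (unjustified) identification $\rho_a=\rho_i$, then produces a list of candidate decompositions of the shape $n=\rho_k+\bigl[\daleth+360(\Game_{\rho_i}+\Game_{\rho_j})\bigr]$ with $\rho_k$ prime and $\daleth\in C$. The contradiction is closed by invoking an external result --- ``Theorem~7 (Ova--Ova--Prime--Ova)'' of \cite{Y.Acevedo} --- which is asserted to guarantee that at least one of the brackets is itself prime.

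Your step three and the paper's appeal to that external theorem occupy exactly the same structural position: both demand that a \emph{specific} integer in a prescribed residue class modulo $360$ be prime, not merely that the class contain infinitely many primes. You identify this demand as equivalent in strength to binary Goldbach; the paper delegates it to the cited lemma. The divergence between your approach and the paper's is therefore not in the ova-angular machinery --- your step one and step two are cleaner analogues of the paper's congruence bookkeeping --- but in whether one regards that external input as established. Your scepticism on this point is well placed.
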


\begin{proof} Reducing to Absurd.\\

Suppose there is an even number $n$, $n\in\mathbb{N}, n>2$ such that $n\neq \rho_x+\rho_y$, where $\rho_x, \rho_y$ are
prime numbers not necessarily different; that is, $ n $ cannot be expressed as the sum of two prime numbers.\\

It is clear that the following cases occur: \\
$i)$ $n$ n is unique; that is $n$ belongs to the singleton $A:\{n\in\mathbb{N}:n\neq \rho_x+\rho_y\}$.\\
$ii)$ $n$ belongs to a finite set $A=\{n_1,n_2,...n_k \in\mathbb{N}: n\neq \rho_x+\rho_y\}.$\\
$iii)$ $n$ belongs to an infinite set $A=\{n_1,n_2,n_3,...n_k,...\in\mathbb{N}: {n_k}\neq \rho_x+\rho_y\}.$\\

In any case, it is clear that the set $ A $ being a subset of $\mathbb{N}$ has a minimum element.
Let be $n=min(A)$. Therefore it is possible to affirm that:
\begin{equation}\label{Goldbach dem1}
n=\rho_i +\rho_j+2,
\end{equation}
that is, $ n $ is the sum of three prime numbers ($n-2$ is the sum of two prime numbers).\\

Now if $\rho_i,\, \rho_j$ are even primes, then $n=6 = 3 + 3$ $\rightarrow \leftarrow $.
On the other hand, if $\rho_i$ is an odd prime and $\rho_j $ is an even prime, then $n$ is odd which is also contradictory.
Then the only option that can be considered is that $\rho_i, \rho_j$ are odd primes.\footnote{This result,
so far it is quite relevant since it indicates that if there is an even number that cannot be expressed as a sum of
two prime numbers, it should necessarily be expressed as the sum of three prime numbers (two odd primes and two).}\\

Now, if $\rho_i + 2$ or $\rho_j + 2$ form a prime number (a twin prime) then $n=\rho_x +\rho_y$
$\rightarrow\leftarrow$ again. Thus, for $n=\rho_i +\rho_j +2$ it is true that $\rho_i +2$ 
is not a prime, $\rho_i + \rho_j$, and $\rho_j +2 $ is also not a prime.\\

Let $q=\rho_j+ 2$. Then, as $\rho_j\geq3$ then $q\geq5$, or better yet, since $q$ cannot be a prime then $q>7$. So to finish this first part, we have to consider
\begin{equation}\label{Goldbach dem2}
n= \rho_i + q ,\,\, q>7\,\,\text{odd number, not prime and y}\,\,\rho_i\,\,\text{odd prime number.}
\end{equation}

Now let us consider the following fact:
By the Golbach's weak conjecture demonstrated by Harald Andrés Helfgot \cite{Helftgot2013}, we obtain that every odd number greater than $5$ can be expressed as the sum of three prime numbers, so $\forall m>5$, $m$ odd number, it is true that $m=\rho_1+\rho_2+\rho_3$; with $\rho_i$ odd prime numbers (not necessarily different), for $i=\{0,1,2,3\}$.
Now, since $m$ is odd, $m$ is of the form $2k+1$, thus, $\forall m>5$ it is true that $\forall k>2 , k \in\mathbb{N}$; $2k+1=\rho_1+\rho_2+\rho_3$; $\rho_i$ odd prime numbers (not necessarily different), for $i=\{0,1,2,3\}$. On the other hand since $n>2$ is even, in particular $\exists t, \,t\in\mathbb{N}$ such that $2t=n>2$, thus $t>1$, so if $t=2$, then $n=4=2+2$, thus $n=\rho_x+\rho_y\,\longrightarrow\longleftarrow$. Therefore, for $2t=n>2$, it is true that $t>2$.\\

Now, since $t$ is particular and $t>2$, $k=t$. Then by the conditions of $k$ in Golbach's weak conjecture, there exist
$\rho_a,\rho_b,\rho_c$ odd prime numbers such that $2t+1=\rho_a + \rho_b + \rho_c$ or better yet $n=\rho_a + \rho_b + \rho_c -1$. Now, since $\rho_{a,b,c}$ is a prime number, it is true that:
\begin{equation*}
\rho_b\equiv\daleth_{\rho_b} \pmod{360} \longrightarrow \rho_b=\daleth_{\rho_b}+360\beta,\, \beta\in\mathbb{Z^{+}}\bigcup \, \left\{0\right\},
\end{equation*}
\begin{equation*}
\rho_c\equiv\daleth_{\rho_c} \pmod{360} \longrightarrow \rho_c=\daleth_{\rho_c}+360\omega,\, \omega\in\mathbb{Z^{+}}\bigcup \, \left\{0\right\}.
\end{equation*}

where $\daleth_\rho \in\emph{C}$ residual set of $\rho \pmod{360}$. Then\\

$$\rho_a + \rho_b + \rho_c -1=\rho_a+\daleth_{\rho_b}+360\beta+\daleth_{\rho_c}+360\omega -1=n.$$

Now, let $r=\daleth_{\rho_b}+\daleth_{\rho_c}-1+360(\omega+\beta)= \rho_b + \rho_c -1$. Then $n= \rho_a + r$, where $\rho_a$ is an odd prime number and $r$ 
is an odd natural, $r\geq5$.\\

Thus, if $r$ is a prime number, then a contradiction would be generated $n=\rho_x + \rho_y$ $\rightarrow\leftarrow$.

Since $r$ is odd and cannot be a prime number, then we have:
\begin{equation}\label{Goldbach dem3}
n= \rho_a + r ,\,\, r\,\,\text{odd number, not prime.}\,\, r>7\,\,\text{and}\,\, \rho_a\,\, \text{odd prime number.}
\end{equation}
On the other hand, note the following fact:\\

$r=\daleth_{\rho_b}+\daleth_{\rho_c}-1+360(\omega+\beta)=(\daleth_{\rho_b}+\daleth_{\rho_c}-1)+360(\upsilon); \, \upsilon=\omega+\beta$ then
$r\equiv \daleth_{\rho_b}+\daleth_{\rho_c}-1 \pmod {360}$ 
or better yet,
$2\mu+1 \equiv \daleth_{\rho_b}+\daleth_{\rho_c}-1 \pmod{360};\,\,\mu>2$ 
for being $r$ odd number.\\

Then $2\mu+1 \equiv \daleth_{\rho_b}+\daleth_{\rho_c}-1 \pmod{360};\,\mu>2$ and
$2\mu \equiv \daleth_{\rho_b}+\daleth_{\rho_c}-2 \pmod {360};$\\
$$\mu \equiv \frac{\daleth_{\rho_b}+\daleth_{\rho_c}}{2}-1 \pmod{180};\,\,\mu>2,$$
where we have that the set of incongruous solutions for the given linear congruence is
\begin{equation}\label{Goldbach dem4}
\left \{ \frac{\daleth_{\rho_b}+\daleth_{\rho_c}}{2}-1 \, , \, \frac{\daleth_{\rho_b}+\daleth_{\rho_c}}{2}+359 \right \}.
\end{equation}

This set has infinite solutions for $\mu$, but there must only be one that satisfies for $n\neq\rho_x +\rho_y$
 and this is found by establishing a relation for $r$ and $q$. \\

Let $\rho_a = \rho_i$. Then it must necessarily satisfy $r=q\,$ that makes sense of the expressions
\eqref{Goldbach dem2},\eqref{Goldbach dem3},\eqref{Goldbach dem4}.
Thus, it must be fulfilled that:
\begin{equation*}
\rho_j +2 \equiv 2\mu +1 \pmod{360}\,\, = \,\,\,\,q\equiv r \pmod{360}
\end{equation*}
\begin{equation}
=\, \rho_j +2 \, \equiv \daleth_{\rho_b} + \daleth_{\rho_c}-1\pmod{360}.
\end{equation}

Thus, since $\rho_j\equiv \daleth_{\rho_j} \pmod{360}$ then $\rho_j +2\equiv \daleth_{\rho_j}+2 \pmod{360}$.
By the principle of substitution and symmetry in linear congruences, we have:
\begin{equation*}
\daleth_{\rho_b} + \daleth_{\rho_c}-1 \equiv \daleth_{\rho_j}+2 \pmod{360},
\end{equation*}
so $\daleth_{\rho_j}+3 \equiv \daleth_{\rho_b} + \daleth_{\rho_c} \pmod{360}$, that is
\begin{equation}
0 \equiv \daleth_{\rho_b} + \daleth_{\rho_c} -3 -\daleth_{\rho_j}\pmod{360}.
\end{equation}
Now, note that not every combination for $\daleth_{\rho_b}, \daleth_{\rho_c},\daleth_{\rho_j} \in \emph{C}$
it is true that $360\mid (\daleth_{\rho_b}+\daleth_{\rho_c}-3-\daleth_{\rho_j})$; namely:\\
$$2\leq\daleth_{\rho_b}+\daleth_{\rho_c}\leq359+\daleth_{\rho_c}\leq718\,\, ,$$
$$-1\leq\daleth_{\rho_b}+\daleth_{\rho_c}-3\leq718-3\,\,,$$
$$-1\leq\daleth_{\rho_b}+\daleth_{\rho_c}-3\leq715.$$

Now, since $1\leq \daleth_{\rho_j}\leq359$ then $-359\leq -\daleth_{\rho_j}\leq-1$. Thus:
\begin{equation}\label{Goldbach dem5}
-360\leq\daleth_{\rho_b}+\daleth_{\rho_c}-3-\daleth{\rho_j}\leq714.
\end{equation}
Thus, we are interested in combinations whose answer is $-360,0,360$. This necessarily directs attention to the equation \eqref{Goldbach dem1}:
$n=\rho_i +\rho_j+2$, where $\rho_i, \rho_j$ are prime numbers. Thus $\exists \, \daleth_{\rho_i}, \daleth_{\rho_j} \in C$ such that:\\
$$\rho_i = \daleth_{\rho_i} + 360 (\Game_{\rho_i)}\,\,\,\text{and}\,\,\,\rho_{j} = \daleth_{\rho_j} + 360 (\Game_{\rho_{j}}).$$

Thus: $n = \daleth_{\rho_i} + 360 (\Game_{\rho_i}) +\daleth_{\rho_j} + 360(\Game_{\rho_j}) + 2$, or better yet:
\begin{equation}\label{Goldbach dem6}
n = (\daleth_{\rho_i} +\daleth_{\rho_j}+2)+ 360 (\Game_{\rho_i}+\Game_{\rho_j}).
\end{equation}
Thus, in accordance with the previously established set of solutions in \eqref{Goldbach dem5}, $\exists \,
\daleth_{\rho_l},\daleth_{\rho_m},\daleth_{\rho_n},...\in C$ such that:\\
      $$\daleth_{\rho_i} + \daleth_{\rho_j} + 2 -\daleth_{\rho_l}= \rho_1,$$
      $$\daleth_{\rho_i} + \daleth_{\rho_j} + 2 -\daleth_{\rho_m}= \rho_2,$$
      $$\daleth_{\rho_i} + \daleth_{\rho_j} + 2 -\daleth_{\rho_n}= \rho_3,$$
         $$... = ...,$$
         $$... = ...,$$
      $$... = \rho_k,$$
for $\rho_1,\rho_2, \rho_3,...,\rho_k$ prime numbers.\\

Subtracting and adding in \eqref{Goldbach dem6} $\daleth_{\rho_l}$, subtracting and adding in
      \eqref{Goldbach dem6} $\daleth_{\rho_m}$, ... we have:

      $$n = \rho_1 + [\daleth_{\rho_l} + 360 (\Game_{\rho_i}+\Game_{\rho_j})],$$
      $$n = \rho_2 + [\daleth_{\rho_m} + 360 (\Game_{\rho_i}+\Game_{\rho_j})],$$
      $$n = \rho_3 + [\daleth_{\rho_n} + 360 (\Game_{\rho_i}+\Game_{\rho_j})],$$
      $$n = ...\,\,+ [...\,\,+ ...],$$
      $$n = ...\,\,+ [...\,\,+ ...],$$
      $$n = \rho_k \, +[ ...\,\,+ ...],$$ $\rho_1,\rho_2,...\rho_k$ are prime numbers.\\

Then, by the Theorem $7$ \emph{(Ova-Ova-Prime-Ova)} in \cite{Y.Acevedo}, it happens that at least one of the following is true:\\
 i) $\daleth_{\rho_l} + 360(\Game_{\rho_i}+\Game_{\rho_j})=\rho_l$ is a prime number.\\
 ii) $\daleth_{\rho_m} + 360(\Game_{\rho_i}+\Game_{\rho_j})=\rho_m$ is a prime number.\\
 iii) $\daleth_{\rho_n} + 360(\Game_{\rho_i}+\Game_{\rho_j})=\rho_n$ is a prime number.\\
\quad ...) ...\\
...) $\daleth_{\rho_s} + 360(\Game_{\rho_i}+\Game_{\rho_j})=\rho_s$ is a prime number.\\

This necessarily implies that:
$n = \rho_x + \rho_y$ being $\rho_x,\rho_y$ 
prime numbers. $\Rightarrow\Leftarrow$\\
It is established that any even number greater than $2$ can be written as the sum of two prime numbers.
\end{proof}

\begin{corol}
Every natural number greater than $1$ is the average of two prime numbers.
\end{corol}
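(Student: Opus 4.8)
The plan is to derive this immediately from the Goldbach's Conjecture theorem just established. Given a natural number $m > 1$, I would first observe that $2m$ is an even natural number with $2m > 2$, so the hypotheses of the Goldbach theorem are met. Applying that theorem yields primes $\rho_x, \rho_y$ (not necessarily distinct) with $2m = \rho_x + \rho_y$.

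From here I would simply divide by $2$ to obtain $m = \dfrac{\rho_x + \rho_y}{2}$, which exhibits $m$ as the arithmetic mean of the two primes $\rho_x$ and $\rho_y$. Since $m$ was an arbitrary natural number greater than $1$, the conclusion follows for all such $m$.

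The only point worth a sentence of care is the boundary case: the Goldbach statement covers even numbers strictly greater than $2$, and $2m > 2$ is exactly equivalent to $m > 1$, so the hypothesis $m > 1$ in the corollary is precisely what is needed — no smaller cases are claimed and none are lost. There is essentially no genuine obstacle here; the corollary is a one-line consequence, and the ``hard part'' (Goldbach itself) has already been absorbed into the preceding theorem. One could optionally remark that conversely every average of two primes $\tfrac{\rho_x+\rho_y}{2}$ is a natural number greater than $1$ only when $\rho_x+\rho_y$ is even and exceeds $2$, but this direction is not required for the statement as phrased.
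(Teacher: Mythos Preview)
Your proposal is correct and matches the paper's own proof essentially verbatim: the paper also applies the preceding Goldbach theorem to $2n$ for $n>1$ and divides by $2$ to obtain $n=\frac{\rho_1+\rho_2}{2}$. There is nothing to add.
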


\begin{proof}
Through the equation $2n =\rho_{1}+\rho_{2}$, where $1<n\in \mathbb{N}$ and $\rho_1,\rho_2$ are prime numbers, we have
the equivalent expression:
$$n=\frac{\rho_1+\rho_2}{2}.$$
\end{proof}

\begin{teor}[Mersenne's primes inverse series] The series formed by the inverses of the Mersenne's primes converge.
\end{teor}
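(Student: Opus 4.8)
The plan is to prove convergence without using anything about the cardinality of the set $\mathcal{M}$ of Mersenne primes (which is unknown), by comparing the series term by term with a geometric series. The key observation is that a Mersenne prime is by definition a prime of the form $M_p=2^p-1$ with $p$ prime, so in particular every Mersenne prime is a Mersenne number $2^n-1$ for some integer $n\ge 2$; hence $\mathcal{M}\subseteq\{\,2^n-1:\ n\ge 2\,\}$ and, all terms being positive,
$$\sum_{M\in\mathcal{M}}\frac{1}{M}\ \le\ \sum_{n\ge 2}\frac{1}{2^n-1}.$$

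The remaining step is the routine estimate $2^n-1\ge 2^{n-1}$ for $n\ge 1$, which yields $\dfrac{1}{2^n-1}\le 2\cdot 2^{-n}$, and therefore
$$\sum_{n\ge 2}\frac{1}{2^n-1}\ \le\ 2\sum_{n\ge 2}2^{-n}\ =\ 1.$$
By the comparison test the series $\sum_{M\in\mathcal{M}}1/M$ converges (to a number at most $1$, the true value being smaller). If one prefers to stay in the ova-angular language of Section 2, one can instead write each $M_p=\daleth_{M_p}+360\,\Game_{M_p}$ and note that the frequencies $\Game_{M_p}=\lfloor(2^p-1)/360\rfloor$ grow at least geometrically in $p$, so that $\sum 1/\bigl(\daleth_{M_p}+360\,\Game_{M_p}\bigr)$ is again dominated by a convergent geometric series; but the bare comparison above is the shortest route.

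I do not expect a genuine obstacle in this argument. The only thing that might superficially look like one --- the famous open problem of whether $\mathcal{M}$ is finite or infinite --- is in fact irrelevant: if $\mathcal{M}$ is finite the series is a finite sum and converges trivially, and if it is infinite the bound above applies uniformly to every partial sum, so convergence holds in either case. The substantive content is merely the recognition that $\mathcal{M}$ sits inside the geometrically sparse set $\{\,2^n-1\,\}$, for which the reciprocal series already converges.
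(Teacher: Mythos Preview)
Your argument is correct. The comparison $\mathcal{M}\subseteq\{2^n-1:n\ge 2\}$ together with $1/(2^n-1)\le 2^{1-n}$ settles convergence immediately, and your remark that the finite-versus-infinite question for $\mathcal{M}$ is irrelevant here is well placed.

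The paper takes a different route, in keeping with its ova-angular theme. It first classifies the Mersenne primes $M_p>7$ by their residues modulo $360$, which fall into the four classes $\daleth_{31},\daleth_{127},\daleth_{247},\daleth_{271}$; correspondingly the exponents $p$ lie in fixed residue classes modulo $12$. The series $\sum 1/M_p$ is then split as $1/M_2+1/M_3$ plus four subseries of the shape $\sum_i 1/(2^{12i+c}-1)$, each of which is shown to converge by d'Alembert's ratio test. What the paper's decomposition buys is consistency with the surrounding framework (and it isolates the residue structure of Mersenne primes), but analytically it is doing the same job as your single geometric comparison, only in four pieces. Your proof is shorter and more elementary; the paper's is thematically motivated but not logically simpler.
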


\begin{proof}
The theorem is equivalent to proving that
$$\lim_{x \to \infty}  \left ( \sum _{M_{\rho}\leqslant x} \frac{1}{M_\rho}\right )<\infty \,\,\,\,;\,\,\,\,\text{with}\,\,M_{\rho}\,\,\in\mathbb{P}.$$

According to \cite{Y.Acevedo},
let us analyze the series given by:
$$\sum _{M_{\rho}} \frac{1}{M_\rho}=$$

$$ \left ( \frac{1}{M_2} \right )+\left ( \frac{1}{M_3} \right )+\left ( \frac{1}{M_5}+\frac{1}{M_{17}}+\frac{1}{M_{1398269}} +...+\frac{1}{M_{2^{(12k-7)}}}+...\right )+$$
$$\left ( \frac{1}{M_7}+\frac{1}{M_{19}}+\frac{1}{M_{31}} +...+\frac{1}{M_{2^{(12t-5)}}}\right )+\left ( \frac{1}{M_{107}}+\frac{1}{M_{86243}}+\frac{1}{M_{756839}} +...+\frac{1}{M_{2^{(12w-1)}}}\right ) +$$
$$\left ( \frac{1}{M_{13}}+\frac{1}{M_{61}}+\frac{1}{M_{2281}} +...+\frac{1}{M_{2^{(12f-11)}}}\right ),$$

Then grouping into subgroups $\daleth_3,\daleth_7,\daleth_{31},\daleth_{127},\daleth_{247},\daleth_{271}$ we obtain
$$\sum _{M_{\rho}} \frac{1}{M_\rho}= \left ( \frac{1}{M_2} \right )+\left ( \frac{1}{M_3} \right )+\left ( \sum_{i=n}^{\infty}{\frac{1}{2^{(12i-7)}-1}} \right )_{\daleth_{31}}+\left ( \sum_{i=m}^{\infty}{\frac{1}{2^{(12i-5)}-1}} \right )_{\daleth_{127}}+$$

$$\left ( \sum_{j=r}^{\infty}{\frac{1}{2^{(12j-1)}-1}} \right )_{\daleth_{247}}+
\left ( \sum_{j=s}^{\infty}{\frac{1}{2^{(12j-5)}-1}} \right )_{\daleth_{271}}.$$

These last expressions, according to the criterion of Alembert's relation, converge, i.e.,
$$\sum _{M_{\rho}} \frac{1}{M_\rho}= \left ( \frac{1}{M_2} \right )+\left ( \frac{1}{M_3} \right )+C_1+C_2+C_3+C_4=C,$$
and therefore
$$\sum _{M_{\rho}} \frac{1}{M_\rho}<\infty  \,\,\,\,\,\,\,\,\,\, \Rightarrow \,\,\,\,\,\,\, \lim_{x \to \infty}  \left ( \sum _{M_{\rho}\leqslant x} \frac{1}{M_\rho}\right )<\infty . $$
\end{proof}

\begin{defin}[Mersenne's constan]
Let $C_{M_\rho}$ be the value to which the sum of the inverses of the Mersenne's primes converges. Then:

$$C_{M_\rho}= \left ( \frac{1}{M_2} \right )+\left ( \frac{1}{M_3} \right )+\left ( \frac{1}{M_5} \right )+\left ( \frac{1}{M_7} \right )+...,$$

$$C_{M_\rho}=0.516454178940788565330487342971522858815968553415419701441.$$
\end{defin}
 
\begin{teor}
For every Mersenne's prime $M_{\rho}>3$, it is true that $M_{\rho}+2$ is not a prime number. That is, there is no Mersenne's prime $M_\rho>3$ that has a twin prime.
\end{teor}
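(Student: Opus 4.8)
The plan is to reduce the statement to a single congruence modulo $3$, using the fact that every Mersenne prime $M_\rho>3$ has an \emph{odd} prime exponent. First I would recall the setup: $M_\rho = 2^\rho-1$ with $\rho\in\mathbb P$, and observe that $M_\rho>3$ forces $\rho\neq 2$, hence $\rho$ is an odd prime; in particular $M_\rho\geq 2^3-1=7$, so $M_\rho+2\geq 9$.

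Next I would compute $M_\rho+2 = 2^\rho+1$ and reduce modulo $3$. Since $2\equiv -1\pmod 3$ and $\rho$ is odd, $2^\rho+1\equiv (-1)^\rho+1\equiv -1+1\equiv 0\pmod 3$, so $3\mid M_\rho+2$. Because $M_\rho+2\geq 9>3$, the number $M_\rho+2$ has a proper divisor and is therefore composite; in particular it is not prime, so no Mersenne prime $M_\rho>3$ admits a twin.

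To keep the argument inside the ova-angular framework, I would also note that this is consistent with the partition $\mathbb P_{360}$ used above: each Mersenne prime $M_\rho>3$ lies in one of the classes $\daleth_{7},\daleth_{31},\daleth_{127},\daleth_{247},\daleth_{271}$ that appear in the convergence proof, and for each such representative $\daleth$ one checks directly that $\daleth+2\in\{9,33,129,249,273\}$ is divisible by $3$. Since $M_\rho\equiv\daleth_{M_\rho}\pmod{360}$ and $3\mid 360$, this re-establishes $3\mid(M_\rho+2)$, giving the same conclusion purely in terms of the residues $\daleth_{M_\rho}\in C$.

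There is no genuine obstacle here; the proof is essentially one line. The only point requiring a little care is the bookkeeping: one must use that $M_\rho>3$ really excludes the single exceptional exponent $\rho=2$ (so that oddness of $\rho$, and hence $2^\rho\equiv -1\pmod 3$, is available), and that the resulting multiple of $3$ is strictly larger than $3$, so that divisibility by $3$ actually yields compositeness rather than the degenerate case $M_\rho+2=3$.
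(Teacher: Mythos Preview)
Your proof is correct. The core argument --- that $M_\rho+2=2^\rho+1\equiv(-1)^\rho+1\equiv 0\pmod 3$ for odd $\rho$, together with $M_\rho+2\ge 9$ --- is complete and airtight, and your bookkeeping about excluding $\rho=2$ is exactly what is needed.

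Your route differs from the paper's. The paper argues entirely within the ova-angular framework: it asserts that the only possible residues $\daleth$ for a Mersenne prime (beyond the small cases) are $31,127,247,271$, observes that none of $33,129,249,273$ lie in the residual set $C$, and concludes that $M_\rho+2$ cannot be prime. Your mod~$3$ argument is more elementary and self-contained: it does not require the prior classification of Mersenne residues modulo $360$, nor the explicit list $C$. In fact your second paragraph \emph{explains} the paper's observation --- the reason $\daleth+2\notin C$ in each case is precisely that $3\mid(\daleth+2)$ --- so your approach subsumes theirs. You also correctly include the class $\daleth_7$ (covering $M_3=7$), which the paper's list omits; since $7+2=9\notin C$ this is harmless there, but your treatment is cleaner on this point.
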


\begin{proof}
The only ones possible $\daleth$ for a Mersenne's prime are $\daleth_{31},\,\daleth_{127},\,\daleth_{247},\,\daleth_{271}$ and by adding $2$, we observe that there is not possible ova-angular $\daleth \in C$, since its next ova-angular are separated by a greater distance, therefore there cannot be any prime of the form $M_{\rho}+2$.
\end{proof}

\begin{teor}
Let $C_{Germain}$ be the set of all ova-angular $\daleth_\rho$ such that $\rho=2\rho_i + 1$ is prime, i.e., the set of all ova of the primes formed with the Germain's primes. Then:\\

$C_{Germain}=\{ 5, 7, 11, 23, 47, 59, 83, 107, 119, 143, 167, 179, 187, 191, 203, 227, 239, 263, 287, 299, 323, 347,$\\
$359\}.$
\end{teor}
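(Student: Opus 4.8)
The plan is to prove the set equality by the two inclusions, in the spirit of the earlier ``$C_{n^{2}+1}$'' theorem.

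First I would prove that every element of $C_{Germain}$ lies in the listed set. Suppose $\rho=2\rho_i+1$ with $\rho_i,\rho\in\mathbb{P}$. Then $\rho$ is an odd prime, so $\daleth_\rho\in C^{*}$ and $\daleth_\rho$ is odd (because $\rho=\daleth_\rho+360\,\Game_\rho$ and $360\,\Game_\rho$ is even), in particular $\daleth_\rho\neq2$. Substituting $\rho=\daleth_\rho+360\,\Game_\rho$ from \eqref{ecprincipal} into $\rho_i=(\rho-1)/2$ gives
$$\rho_i=\frac{\daleth_\rho-1}{2}+180\,\Game_\rho ,$$
an integer, so $\rho_i\equiv\frac{\daleth_\rho-1}{2}\pmod{180}$. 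If $\Game_\rho\geq1$ then $\rho_i>180$, so any prime dividing $\gcd\!\big(\tfrac{\daleth_\rho-1}{2},180\big)$ (necessarily one of $2,3,5$) would be a proper divisor of $\rho_i$, contradicting its primality; hence $\gcd\!\big(\tfrac{\daleth_\rho-1}{2},180\big)=1$, which (since $360=2^{3}\cdot3^{2}\cdot5$) also forces $\gcd(\daleth_\rho,360)=1$, i.e. $\daleth_\rho\in C$. If instead $\Game_\rho=0$, then $\rho=\daleth_\rho<360$ and $\daleth_\rho$ is itself a safe prime below $360$; there are only finitely many, checked by hand, and these yield the small anomalies $5=2\cdot2+1$, $7=2\cdot3+1$, $11=2\cdot5+1$, whose half-minus-one values $2,3,5$ are not coprime to $180$. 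Running through the residues $\daleth\in C^{*}$ that are either safe primes below $360$ or satisfy $\gcd\!\big(\tfrac{\daleth-1}{2},180\big)=1$ then produces the list in the statement.

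Next I would prove the reverse inclusion by exhibiting, for each listed residue $\daleth$, an explicit Germain prime $\rho_i$ with $\daleth_{2\rho_i+1}=\daleth$, so that $\daleth\in C_{Germain}$. For many residues a small witness suffices ($23=2\cdot11+1$, $47=2\cdot23+1$, $167=2\cdot83+1$, $179=2\cdot89+1$, and so on); for the others one searches the progression $\daleth+360t$ for a safe prime, e.g. $479=2\cdot239+1$ for $\daleth=119$, $503=2\cdot251+1$ for $\daleth=143$, $563=2\cdot281+1$ for $\daleth=203$, proceeding class by class.

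The hard part will be this second inclusion. The modular sieve of the first step only narrows $C_{Germain}$ to a finite candidate list; it gives no elementary guarantee that a surviving residue is actually attained, since showing that a fixed class modulo $360$ contains a safe prime is a weak special case of the still-open Sophie Germain conjecture. Consequently the theorem genuinely rests on the finite, residue-by-residue verification that each candidate class admits at least one concrete safe-prime witness, and it is the success of that search — rather than any general principle — that pins down the set.
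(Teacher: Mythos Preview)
Your two-inclusion scheme is far more explicit than the paper's own argument, which is a single sentence: ``The proof is similar to the ova-angular residues found for Mersenne's primes.'' So at the level of method there is little to compare; your modular sieve is exactly the kind of computation that sentence is gesturing at.

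The sieve itself is set up correctly, but you asserted without checking that it ``produces the list in the statement''. It does not. For $\daleth=187$ one has $(\daleth-1)/2=93=3\cdot 31$, and for $\daleth=191$ one has $(\daleth-1)/2=95=5\cdot 19$; in both cases $\gcd\!\big(\tfrac{\daleth-1}{2},180\big)>1$. Neither residue is rescued by the ``safe prime below $360$'' clause either: $187=11\cdot 17$ is composite, and $191$ is prime but $(191-1)/2=95$ is not. Your argument in fact \emph{proves} that no safe prime is congruent to $187$ or $191$ modulo $360$: if $\rho\equiv 187\pmod{360}$ then $(\rho-1)/2\equiv 93\pmod{180}$ is always a multiple of $3$, forcing $(\rho-1)/2=3$ and $\rho=7\not\equiv 187$; the case $\rho\equiv 191$ is identical with $5$ in place of $3$.

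So the second inclusion---finding a concrete safe-prime witness in each listed class---must fail for exactly these two residues, and the theorem as printed is incorrect. Your sieve genuinely yields the $21$-element set obtained by deleting $187$ and $191$; witnesses for those $21$ classes can indeed be found along the lines you sketched (e.g.\ $1319=2\cdot 659+1$ for $\daleth=239$, $1019=2\cdot 509+1$ for $\daleth=299$, $3203=2\cdot 1601+1$ for $\daleth=323$). The gap, then, is not in your strategy but in trusting the target set without running the very computation you described.
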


\begin{proof}
The proof is similar to the ova-angular residues found for  Mersenne's primes.
\end{proof}

The Figure \ref{Figura:OvaGermain} shows the connection of all residuals $\daleth\neq5$ of the primes generated with the Germain's primes. Its reflective symmetry is striking.
\begin{figure}[ht]
\centering
\includegraphics[width=1.569\textwidth]{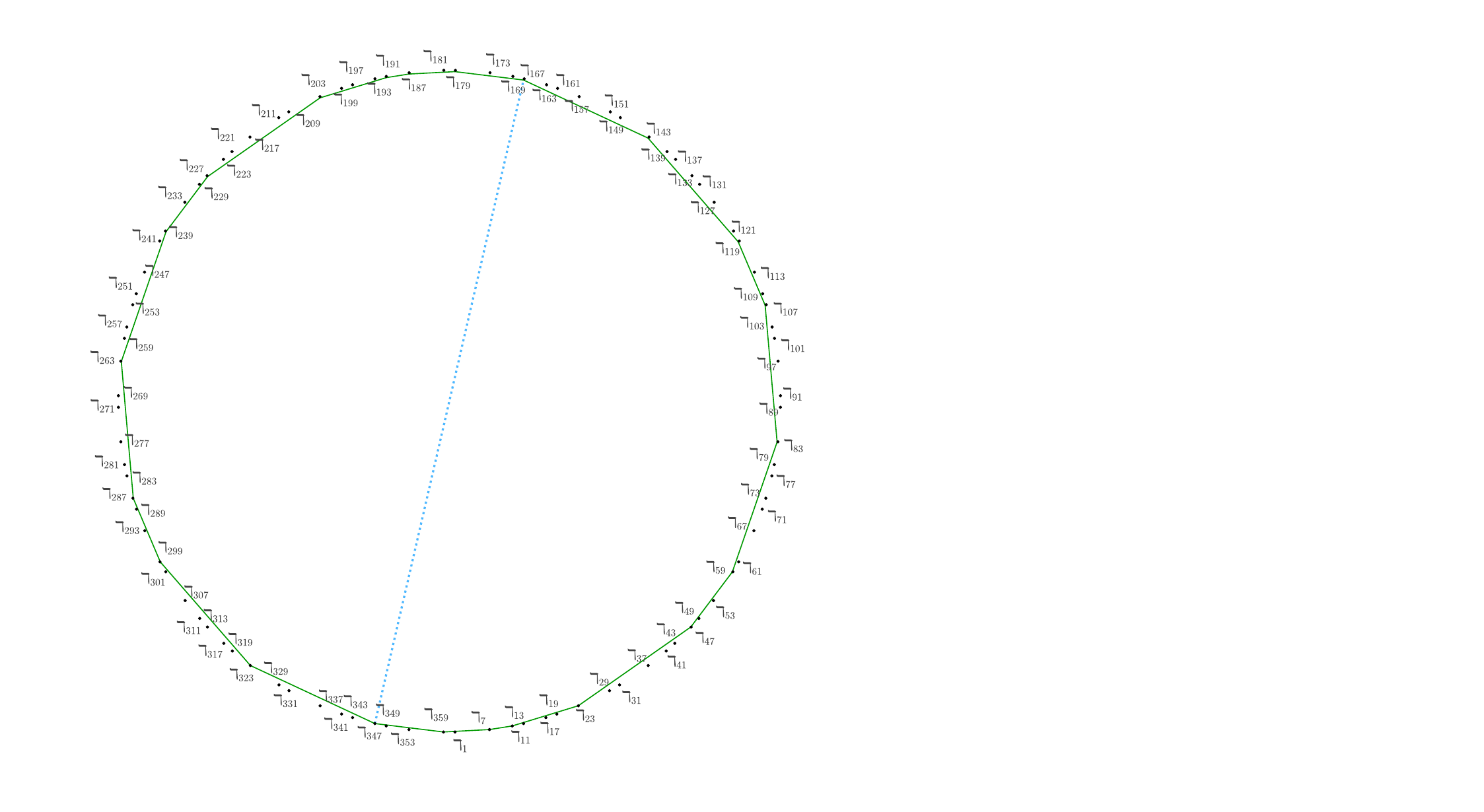}
\caption{Ova -$\daleth_{Germain}$ conection.}\label{Figura:OvaGermain}
\end{figure}


\begin{prop}[Equivalence with the prime number theorem]
$$\lim_{x \rightarrow \infty} \frac{\pi(x)}{x\,/\, log(x)}=1\,\,\,\,\,\, \Rightarrow\,\,\,\,\,\,\lim_{x \rightarrow \infty} \frac{\pi(x,\daleth_\rho,360) \phi(360)}{x\,/\, log(x)}=1.$$ 
\end{prop}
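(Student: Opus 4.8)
The plan is to transfer the global count $\pi(x)$ to a single ova-angular class by exploiting that $C$ is a reduced residue system modulo $360$. First I would record the exact bookkeeping identity behind the partition $\mathbb{P}_{360}$. After deleting the three exceptional primes, every prime $\rho$ has $\daleth_\rho\in C$, and conversely $\operatorname{card}(C)=\operatorname{card}(C^{*})-3=99-3=96=\varphi(360)$, the $96$ residues in $C$ being exactly those coprime to $360=2^{3}\cdot 3^{2}\cdot 5$. Writing $\pi(x,\daleth,360)$ for the number of primes $\le x$ with ova-angular residue $\daleth$, the classes $\{\daleth\in C\}$ partition $\mathbb{P}\setminus\{2,3,5\}$, so that
$$\pi(x)-3=\sum_{\daleth\in C}\pi(x,\daleth,360)\qquad (x\ge 5).$$

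Next I would argue that the $96$ counting functions on the right share a common main term. Dirichlet's theorem on arithmetic progressions guarantees that each $\daleth\in C$ produces infinitely many primes, and the symmetry results already established — closure of $C$ under $\daleth\mapsto\daleth^{-1}\pmod{360}$ (Theorem~\ref{elementoinverso}), under $\daleth\mapsto 360-\daleth$, and under multiplication mod $360$ — single out no ova-angular class over another, so each must carry the same asymptotic share $1/\varphi(360)$ of the primes. Feeding this into the displayed decomposition and dividing by $x/\log x$,
$$\frac{\pi(x,\daleth_\rho,360)\,\varphi(360)}{x/\log x}=\varphi(360)\cdot\frac{\pi(x,\daleth_\rho,360)}{\pi(x)-3}\cdot\frac{\pi(x)-3}{x/\log x}\;\longrightarrow\;\varphi(360)\cdot\frac{1}{\varphi(360)}\cdot 1=1,$$
the last factor tending to $1$ by the hypothesised Prime Number Theorem.

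The main obstacle is precisely the equidistribution step: promoting ``the classes are symmetric and each is infinite'' to ``each class has density $1/\varphi(360)$'' is not a formal consequence of the bare PNT; it is the content of the Prime Number Theorem for arithmetic progressions (de la Vallée-Poussin), whose proof rests on the non-vanishing of the Dirichlet $L$-functions $L(s,\chi)$ on the line $\Re s=1$ for the characters $\chi\bmod 360$. A fully rigorous completion would either invoke that theorem directly with modulus $360$ and residue $\daleth_\rho\in C$, or reproduce its analytic core; the ova-angular symmetries organise the $96$ classes and supply the heuristic for uniformity, but they do not by themselves replace the $L$-function input. Accordingly I would present the statement as the consequence of combining the PNT asymptotic with the quantitative form of Dirichlet's theorem, the ova-angular framework providing the partition $\pi(x)-3=\sum_{\daleth\in C}\pi(x,\daleth,360)$ into which that input is fed.
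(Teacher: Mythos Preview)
Your proposal is substantially more detailed than the paper's own proof, which consists of a single sentence citing external references (Breitzman, Goldston--Pintz--Y{\i}ld{\i}r{\i}m) for the Prime Number Theorem in arithmetic progressions and nothing else. You set up the partition $\pi(x)-3=\sum_{\daleth\in C}\pi(x,\daleth,360)$ with $|C|=\varphi(360)=96$ correctly, and you are right to flag that the equidistribution step $\pi(x,\daleth,360)/\pi(x)\to 1/\varphi(360)$ cannot be derived from the bare PNT together with the closure/symmetry properties of $C$: those properties are purely algebraic facts about $(\mathbb{Z}/360\mathbb{Z})^{\times}$ and carry no analytic information about prime densities. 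The missing ingredient is exactly the non-vanishing of $L(1,\chi)$ for the Dirichlet characters modulo $360$, i.e., the classical de la Vall\'ee-Poussin result.

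Since the paper itself simply defers to that result by citation, your plan---record the ova-angular partition, then invoke PNT for arithmetic progressions with modulus $360$ for the density input---is precisely what a fleshed-out version of the paper's proof would be, and your explicit acknowledgement of where the real analytic work lies is an improvement over the paper's treatment. There is no genuine gap in your argument once the cited theorem is granted; the factorisation $\varphi(360)\cdot\frac{\pi(x,\daleth_\rho,360)}{\pi(x)-3}\cdot\frac{\pi(x)-3}{x/\log x}$ then gives the conclusion immediately.
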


\begin{proof}
This proposition is already proved \cite{Breitzman1970, Goldston-Pintz-Yildirim, GOLDSTON2020}.
\end{proof}

Some final properties of the ova-angular rotations are listed below, the proofs are omitted since they are immediate.

\begin{teor}[Sum of digits of a prime]
Let $S_\rho$ be the sum of the digits of a prime number $\rho$. Then $S_\rho \equiv {\daleth_\rho} \pmod{9}$, and $\,S_\rho \equiv {\daleth_\rho} \pmod{3}$.
\end{teor}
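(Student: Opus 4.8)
The plan is to reduce the whole statement to the elementary divisibility rule for the digit sum together with the canonical decomposition \eqref{ecprincipal}.

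First I would recall the classical fact, valid for \emph{every} natural number and in particular for a prime $\rho$: if $\rho = \sum_{i=0}^{m} d_i 10^{i}$ is the decimal expansion of $\rho$ with digits $d_i \in \{0,1,\dots,9\}$, then since $10 \equiv 1 \pmod 9$ we have $10^{i} \equiv 1 \pmod 9$ for all $i$, hence
\[
\rho = \sum_{i=0}^{m} d_i 10^{i} \equiv \sum_{i=0}^{m} d_i = S_\rho \pmod 9 .
\]
So $S_\rho \equiv \rho \pmod 9$ unconditionally.

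Next I would invoke equation \eqref{ecprincipal}, namely $\rho = \daleth_\rho + 360\,(\Game_\rho)$. Because $360 = 2^{3}\cdot 3^{2}\cdot 5$ is a multiple of $9$, the term $360\,(\Game_\rho)$ is congruent to $0$ modulo $9$, so $\rho \equiv \daleth_\rho \pmod 9$. Chaining this with the previous congruence gives $S_\rho \equiv \daleth_\rho \pmod 9$, which is the first assertion.

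For the congruence modulo $3$ there is essentially nothing left to do: since $3 \mid 9$, any congruence modulo $9$ descends to one modulo $3$, so $S_\rho \equiv \daleth_\rho \pmod 9$ immediately yields $S_\rho \equiv \daleth_\rho \pmod 3$ (equivalently, one can rerun the first step using $10 \equiv 1 \pmod 3$ and note $3 \mid 360$). There is no genuine obstacle in this proof; the only point that must be stated with care is the digit-sum rule modulo $9$, and the decomposition \eqref{ecprincipal} supplies the rest, which is why the author can describe the argument as immediate.
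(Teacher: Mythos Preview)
Your argument is correct and is precisely the elementary reasoning the paper has in mind: it lists this theorem among those whose proofs are ``omitted since they are immediate,'' and the immediacy comes exactly from the digit-sum rule $S_\rho\equiv\rho\pmod 9$ together with $9\mid 360$ in \eqref{ecprincipal}. There is nothing to add.
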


\begin{teor}[Necessary but not sufficient condition]
If $\rho=\daleth_\rho + 360(\Game_\rho)$, with $\rho$ a prime number and $\Game_\rho >0$, then $gcd (\daleth_\rho, \Game_\rho)=1$, and  $lcm(\daleth_\rho,\Game_\rho)=\daleth_\rho . \Game_\rho$.
\end{teor}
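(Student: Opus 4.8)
The plan is to run a direct divisibility argument off the defining identity $\rho=\daleth_\rho+360\,\Game_\rho$ recorded in \eqref{ecprincipal}. First I would put $d=\gcd(\daleth_\rho,\Game_\rho)$ and note that $d\mid\daleth_\rho$ and $d\mid\Game_\rho$, hence $d\mid 360\,\Game_\rho$; adding, $d\mid(\daleth_\rho+360\,\Game_\rho)=\rho$. Since $\rho$ is prime, this forces $d\in\{1,\rho\}$, so the whole argument reduces to excluding $d=\rho$.

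Next I would dispose of the case $d=\rho$ by a size comparison. Because a prime $\rho$ is never divisible by $360$, the residue satisfies $1\le\daleth_\rho<360$; combined with the hypothesis $\Game_\rho>0$ this gives $\rho=\daleth_\rho+360\,\Game_\rho\ge 361>360>\daleth_\rho\ge d$. Hence $d\neq\rho$, so $d=1$, which is the first assertion $\gcd(\daleth_\rho,\Game_\rho)=1$.

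The second assertion then follows immediately from the elementary identity $\gcd(a,b)\cdot\operatorname{lcm}(a,b)=ab$ valid for positive integers $a,b$: taking $a=\daleth_\rho$ and $b=\Game_\rho$ (both positive, the former by the previous paragraph, the latter by hypothesis) and substituting $\gcd(\daleth_\rho,\Game_\rho)=1$ yields $\operatorname{lcm}(\daleth_\rho,\Game_\rho)=\daleth_\rho\cdot\Game_\rho$.

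I do not anticipate a genuine obstacle here; the only point deserving a word of care is the positivity of $\daleth_\rho$, which is what makes $d\le\daleth_\rho<\rho$ strict and hence rules out $d=\rho$, and this is exactly where primality (giving $360\nmid\rho$) and the assumption $\Game_\rho>0$ (giving $\rho>360$) are used. The qualifier ``necessary but not sufficient'' in the statement is only a remark — it records that this coprimality is forced by primality but fails to characterise it — and requires nothing further to be proved.
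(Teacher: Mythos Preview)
Your argument is correct. The paper itself omits the proof, noting only that it is ``immediate''; your direct divisibility argument from the identity $\rho=\daleth_\rho+360\,\Game_\rho$ is precisely the natural one-line justification the paper has in mind, including the care taken to rule out $d=\rho$ via $d\le\daleth_\rho<360<\rho$.
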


\begin{teor}
\label{wilson interpretacion}
A natural number $n$ is a prime number 
if and only if $\frac{(n)!+n}{n^2}$ $\pmod{360}= k$; for some integer $k<360$.
\end{teor}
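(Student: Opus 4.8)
The plan is to reduce the statement to Wilson's theorem together with its converse, after a one-line algebraic simplification of the displayed quantity. First I would rewrite, for every natural number $n\geq 1$,
\[
\frac{n!+n}{n^{2}}=\frac{n\bigl((n-1)!+1\bigr)}{n^{2}}=\frac{(n-1)!+1}{n}.
\]
The clause ``$\pmod{360}=k$ for some integer $k<360$'' is to be read as the assertion that this rational number is in fact an integer: once it is an integer, its least nonnegative residue modulo $360$ is automatically some $k$ with $0\leq k<360$. The modulus $360$ plays no arithmetic role here and could be replaced by any fixed modulus; it is retained only to keep the statement within the ova-angular framework of the paper. Hence the genuine content of the theorem is the equivalence $n\mid (n-1)!+1 \iff n\in\mathbb{P}$.

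For the direction ``$n$ prime $\Rightarrow$ the quantity is an integer'' I would invoke Wilson's theorem directly: if $\rho=n\in\mathbb{P}$ then $(n-1)!\equiv -1 \pmod{n}$, so $n\mid (n-1)!+1$, the quotient $\frac{(n-1)!+1}{n}$ is an integer, and its residue modulo $360$ is the required $k$. For the converse I would argue by contraposition, assuming $n$ composite with $n\geq 2$ and writing $n=ab$ with $1<a\leq b<n$. If $a<b$, then $a$ and $b$ are distinct factors among $1,2,\dots,n-1$, so $ab=n$ divides $(n-1)!$ and, since $n>1$, $n\nmid (n-1)!+1$. If $a=b$, i.e. $n=a^{2}$ with $a\geq 2$: for $a\geq 3$ the integers $a$ and $2a$ are distinct and both at most $a^{2}-1=n-1$, so $a^{2}=n$ again divides $(n-1)!$; the only remaining case $n=4$ is checked directly, $(4-1)!+1=7$ and $4\nmid 7$. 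In every composite case $\frac{(n-1)!+1}{n}$ fails to be an integer, which completes the equivalence. The degenerate value $n=1$ gives $\frac{0!+1}{1}=2$, an integer, so the statement should be read for $n\geq 2$, or $1$ listed as the customary trivial exception to Wilson's criterion.

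I expect no real obstacle here; the argument is routine, which is why the author groups it among properties ``whose proofs are immediate''. The only points demanding care are interpretational rather than substantive: pinning down the meaning of the ``$\pmod{360}$'' clause as an integrality condition, and, in proving the converse of Wilson's theorem, not overlooking the square-composite boundary case $n=4$ (and the degenerate $n=1$), where the usual ``$n=ab$ with $a\neq b$'' argument does not literally apply.
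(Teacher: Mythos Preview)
Your reduction to Wilson's theorem is exactly what the paper intends: the theorem carries the internal label \texttt{wilson interpretacion}, and the paper explicitly omits the proof among those that ``are immediate''. Your algebraic simplification $\frac{n!+n}{n^{2}}=\frac{(n-1)!+1}{n}$, your reading of the $\pmod{360}$ clause as an integrality condition, and your invocation of Wilson and its converse (with the careful handling of $n=4$) are all correct and constitute precisely the immediate argument the author has in mind; your remark that $n=1$ must be excluded is also apt.
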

\begin{teor}
Let $\rho$ and $\rho+2$ be prime numbers. If $\,\daleth_\rho \neq 359$, then the frequency of rotation of both the prime numbers is the same.
\end{teor}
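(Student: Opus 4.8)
The plan is to reduce the claim, via the fundamental identity \eqref{ecprincipal}, to a parity observation together with the single excluded residue $\daleth_\rho = 359$. Recall from Definition \ref{Def:Frecuencia} that the frequency of rotation is $\Game_\rho = \left\lfloor \rho/360 \right\rfloor$, so what must be shown is exactly $\Game_{\rho+2} = \Game_\rho$.

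First I would dispose of the degenerate case $\rho = 2$: then $\rho + 2 = 4$ is not prime, so every twin pair $\rho,\rho+2$ has $\rho$ odd, whence $\daleth_\rho$ — being congruent modulo $360$ to an odd number — is itself odd; in particular $\daleth_\rho \ne 358$. Together with the hypothesis $\daleth_\rho \ne 359$ and the range constraint $0 < \daleth_\rho < 360$ from Definition \ref{def1}, this yields $\daleth_\rho \le 357$, hence $0 < \daleth_\rho + 2 \le 359 < 360$.

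Next I would apply \eqref{ecprincipal} to $\rho$ and add $2$ to both sides: $\rho + 2 = (\daleth_\rho + 2) + 360\,\Game_\rho$. Since $0 < \daleth_\rho + 2 < 360$, this displays $\daleth_\rho + 2$ as the residue of $\rho+2$ lying in $[0,360)$ and $\Game_\rho$ as the associated quotient; by the uniqueness of that decomposition (Definition \ref{def1}) applied to the prime $\rho+2$, we conclude $\daleth_{\rho+2} = \daleth_\rho + 2$ and, in particular, $\Game_{\rho+2} = \Game_\rho$, as required.

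The only step that can fail is $\daleth_\rho + 2 < 360$. The value $\daleth_\rho = 359$ is genuinely exceptional — there $\rho + 2 \equiv 1 \pmod{360}$, the circle completes a full turn, and $\Game_{\rho+2} = \Game_\rho + 1$ — which is exactly why it is barred in the hypothesis; the companion bad value $\daleth_\rho = 358$ is excluded automatically by the parity of $\rho$. So there is no real obstacle here: the argument is pure bookkeeping on which residues modulo $360$ an odd prime may occupy.
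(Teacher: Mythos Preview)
Your proof is correct. The paper itself omits the proof of this theorem, stating only that it is ``immediate''; your argument is precisely the natural direct verification that makes it so, exploiting the parity of an odd prime to rule out $\daleth_\rho = 358$ and then invoking uniqueness of the quotient--remainder decomposition.
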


\begin{teor}
Neither Germain's nor Mersenne's primes are of the form $k^2+1$, for some integer $k$.
\end{teor}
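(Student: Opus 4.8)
The plan is to reduce both assertions to a comparison of the ova-angular residue catalogues that have already been tabulated. Recall that, by the theorem on primes of the form $n^2+1$, a prime $\rho$ can be written as $\rho=k^2+1$ only if $\daleth_\rho\in C_{n^2+1}=\{1,2,5,17,37,41,77,101,137,161,181,197,217,221,257,281,317,341\}$. So it suffices to show that the ova-angular residue of a Mersenne prime, and that of a prime $\rho=2\rho_i+1$ (a ``Germain'' prime in the sense of the theorem that defines $C_{Germain}$), never lands in $C_{n^2+1}$, treating the finitely many small primes by hand.

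First I would settle the Mersenne case. By the classification of ova-angular residues of Mersenne primes established earlier, every Mersenne prime other than $M_2=3$ and $M_3=7$ has $\daleth_{M_\rho}\in\{\daleth_{31},\daleth_{127},\daleth_{247},\daleth_{271}\}=\{31,127,247,271\}$, and one checks immediately that $C_{n^2+1}\cap\{31,127,247,271\}=\emptyset$; hence no such $M_\rho$ equals $k^2+1$. For the two exceptions, $3-1=2$ and $7-1=6$ are not perfect squares, so they are excluded by inspection.

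For the Germain case, the theorem defining $C_{Germain}$ gives $\daleth_\rho\in C_{Germain}=\{5,7,11,23,47,59,83,107,119,143,167,179,187,191,203,227,239,263,287,299,323,347,359\}$ for every prime $\rho=2\rho_i+1$, and a term-by-term comparison yields $C_{n^2+1}\cap C_{Germain}=\{5\}$; thus for every such $\rho$ with $\daleth_\rho\neq 5$ the residue obstruction already closes the case. The residue $5$ is the single point where the two catalogues overlap — and, on the $n^2+1$ side, the residue $2$ behaves the same way — and this is the main obstacle. It is removed by invoking the theorem \emph{Ova-angular digits 1}: a prime congruent to $5$ modulo $360$ ends in the digit $5$, hence equals $5$, which is one of the three exceptional primes $2,3,5$ discarded when one passes from $C^{*}$ to $C=C^{*}-\{2,3,5\}$, so it lies outside the scope of the statement. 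Beyond this single overlap the whole argument is a finite lookup against $C_{n^2+1}$, which is why the proof is immediate.
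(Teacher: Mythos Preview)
Your argument is correct and is precisely the residue-catalogue comparison the paper has in mind when it groups this theorem among those whose ``proofs are omitted since they are immediate'': one intersects $C_{n^{2}+1}$ with the Mersenne residues $\{31,127,247,271\}$ and with $C_{Germain}$, and disposes of the small exceptional primes directly. Your explicit treatment of the single overlap at $\daleth=5$ (forcing $\rho=5$, which falls under the paper's standing convention $C=C^{*}\setminus\{2,3,5\}$) is exactly the one subtlety that has to be noticed, and you handle it appropriately.
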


\begin{teor}
There is no Mersenne prime who is himself a Germain prime.
\end{teor}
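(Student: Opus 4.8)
The plan is to reduce the statement to a small modular computation by combining the ova-angular residue classification of Mersenne primes with an elementary divisibility identity for numbers of the form $2^{n}-1$. Recall that a Mersenne prime has the shape $M_\rho = 2^\rho-1$ with $\rho$ prime, and that $M_\rho$ is a Germain prime precisely when $2M_\rho+1$ is also prime. Since $2M_\rho+1 = 2(2^\rho-1)+1 = 2^{\rho+1}-1$, the theorem is equivalent to the assertion: whenever $2^\rho-1$ is prime, $2^{\rho+1}-1$ is composite. I would prove this inside the ova-angular language of the paper and then give a one-line confirmation by divisibility.

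\textbf{Ova-angular route.} Fix a Mersenne prime $M_\rho>3$. By the residue classification already used above for Mersenne primes — the only admissible values of $\daleth_{M_\rho}$ being $31,\,127,\,247,\,271$ — and since $2M_\rho+1 \equiv 2\daleth_{M_\rho}+1 \pmod{360}$, the four possibilities $63,\,255,\,495,\,543$ reduce modulo $360$ to the ova-angular residues $63,\,255,\,135,\,183$. Each of these is divisible by $3$, hence none is coprime to $360$ and none lies in the reduced residue system $C$. But $2M_\rho+1 \geq 15 > 5$, so if $2M_\rho+1$ were prime its ova-angular residue would be forced to lie in $C$ (equivalently, to be coprime to $360$, as every prime outside $\{2,3,5\}$ has residue in the R.R.S.). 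This contradiction shows $2M_\rho+1$ is composite, so $M_\rho$ is not a Germain prime.

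\textbf{Elementary confirmation and the main obstacle.} If $2^\rho-1$ is prime then $\rho$ is prime; for $M_\rho>3$ the exponent $\rho$ is odd, so $\rho+1$ is even and at least $4$, hence composite, and from $d\mid m \Rightarrow 2^{d}-1 \mid 2^{m}-1$ one gets $3 = 2^{2}-1 \mid 2^{\rho+1}-1$ with $2^{\rho+1}-1 > 3$; thus $2M_\rho+1$ is composite. The only exponent escaping this is $\rho=2$, i.e.\ $M_2 = 3$ with $2M_2+1 = 7$ prime; but $3$ is one of the three primes $\{2,3,5\}$ removed when passing from $C^{*}$ to $C$, so it lies outside the ova-angular setting and is the natural (and sole) exception. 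The only non-routine ingredient is the residue classification of Mersenne primes modulo $360$, which the paper already invokes via \cite{Y.Acevedo}; granting it, the proof is the four-case check above. The point to state with care — and the crux of the ova-angular version — is that primality of a number exceeding $5$ forces its ova-angular residue into $C$, so that exhibiting $3 \mid (2M_\rho+1)$ at the level of residues already settles the matter, with the boundary value $M_2 = 3$ flagged explicitly against the standing convention $C = C^{*}\setminus\{2,3,5\}$.
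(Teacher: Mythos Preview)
Your ova-angular route is precisely the argument the paper has in mind: the proof is omitted as ``immediate,'' and in the light of the paper's own Theorem~17 this means taking the Mersenne residue list $\{31,127,247,271\}$, computing $2\daleth+1$ for each, and observing that the results $63,255,135,183$ are all multiples of $3$ and hence lie outside $C$, so $2M_\rho+1$ cannot be prime. Your divisibility confirmation via $3\mid 2^{\rho+1}-1$ for odd $\rho$ is a clean independent check.

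The genuine gap is your handling of $M_2=3$. You have correctly observed that $2\cdot 3+1=7$ is prime, so $3$ is simultaneously a Mersenne prime and a Sophie Germain prime; the theorem is therefore \emph{false as literally stated}. Your attempted rescue --- that $3\in C^{*}\setminus C$ places it ``outside the ova-angular setting'' --- does not work: the passage from $C^{*}$ to $C$ is a technical device to obtain a reduced residue system modulo $360$, not a redefinition of which integers count as Mersenne or Germain primes. The honest conclusion is that the paper's statement needs the hypothesis $M_\rho>3$ (exactly as its Theorem~17 carries explicitly), and you should say so rather than invoke a convention that does not bear that weight. A smaller point: the four-residue list only covers exponents $\rho\geq 5$, so $M_3=7$ (residue $7$) is not handled by your ova-angular paragraph either; your divisibility argument does cover it, since $2\cdot 7+1=15$, but the sentence ``Fix a Mersenne prime $M_\rho>3$'' followed by the four-residue classification silently drops this case.
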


\begin{teor}
Let $\rho>5$ be a prime number. If $\rho+2$ is a prime number then $\rho-2$ is a compound number, i.e., there is not trilogy of consecutive primes greater than $3,5,7$.
\end{teor}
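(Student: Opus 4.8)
The plan is to project the triple $\rho-2,\ \rho,\ \rho+2$ onto residues modulo $3$ using the ova-angular decomposition \eqref{ecprincipal}. Since $360=2^{3}\cdot 3^{2}\cdot 5$ is a multiple of $3$, reducing \eqref{ecprincipal} modulo $3$ gives $\rho\equiv\daleth_{\rho}\pmod{3}$ for every prime $\rho$. Moreover, for a prime $\rho>5$ one has $\gcd(\rho,360)=1$, so $\daleth_{\rho}\neq 2,3,5$ and $\daleth_{\rho}$ is a unit of $\mathbb{P}_{360}$, that is $\daleth_{\rho}\in C$; in particular $3\nmid\daleth_{\rho}$, hence $\rho\equiv 1$ or $\rho\equiv 2\pmod{3}$, never $0$.

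First I would record the elementary counting fact that among $\rho-2,\ \rho,\ \rho+2$ exactly one is divisible by $3$: modulo $3$ these are congruent to $\rho+1,\ \rho,\ \rho-1$, which exhaust the residue classes $0,1,2$. Next, assuming $\rho+2$ is also prime and noting $\rho+2>\rho>5$, the previous paragraph applied to both $\rho$ and $\rho+2$ shows $3\nmid\rho$ and $3\nmid(\rho+2)$. Therefore the unique multiple of $3$ among the three numbers must be $\rho-2$, i.e. $3\mid(\rho-2)$. Finally, $\rho>5$ forces $\rho-2>3$, so $\rho-2$ is a proper multiple of $3$ and hence composite, which is the claim; the only triple escaping the argument is $\rho=5$, giving the trilogy $(3,5,7)$, exactly as the statement allows.

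There is essentially no obstacle here: the content is the classical observation that one of three consecutive odd integers is divisible by $3$, repackaged through the residue map $f_{\pmod{360}}$. The one point deserving a line of care is that I deliberately argue modulo $3$ rather than by subtracting inside $C^{*}$, because when $\daleth_{\rho}\in\{1,2\}$ the naive equality $\daleth_{\rho-2}=\daleth_{\rho}-2$ fails (there is a wrap-around of the rotation circle); reducing \eqref{ecprincipal} modulo $3$ is insensitive to this wrap-around, so the argument runs uniformly for all primes $\rho>5$. If a more explicitly ``geometric'' rendering is wanted, one can instead remark that on the twin lines of Figure \ref{Figura:circle4} every admissible pair $(\daleth_{\rho},\daleth_{\rho}+2)$ has both entries coprime to $3$, so the residue class sitting two steps earlier on the circle is always $\equiv 0\pmod 3$; this is the same fact phrased on $C$.
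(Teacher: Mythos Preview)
Your proof is correct: the classical observation that among three consecutive odd integers exactly one is divisible by $3$, combined with $\rho-2>3$, settles the claim, and your routing through \eqref{ecprincipal} reduced modulo $3$ is a clean way to phrase it in the paper's language. The paper itself does not supply a proof of this theorem---it is listed among the final properties whose ``proofs are omitted since they are immediate''---so there is nothing further to compare; your argument is exactly the sort of immediate verification the author intends.
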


\begin{teor}
Let $M_\rho$ be a Mersenne's prime. Then $M_\rho+6n-4 \neq 3$ is always a composite number, for $n \in \mathbb{Z}$.
\end{teor}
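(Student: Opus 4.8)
The plan is to reduce the assertion to a one-line divisibility fact modulo $3$, which is the kind of ``immediate'' argument the paper announces for these closing results; the only real input is the residue of a Mersenne prime modulo $6$.

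First I would record that if $M_\rho>3$ is a Mersenne prime then $M_\rho=2^p-1$ with $p$ an \emph{odd} prime, so that modulo $3$ we have $2\equiv-1$, hence $2^p\equiv(-1)^p=-1$, hence $M_\rho\equiv 1\pmod{3}$. The same fact fits the ova-angular framework directly: as noted in the discussion of $M_\rho+2$ above, the only admissible ova-angular residues of a Mersenne prime $M_\rho>3$ lie in $\{31,127,247,271\}$, each of which is $\equiv 1\pmod{6}$; since $6\mid 360$ and $M_\rho\equiv\daleth_\rho\pmod{360}$, this gives $M_\rho\equiv 1\pmod{6}$, and in particular $M_\rho\equiv 1\pmod{3}$.

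Next, for every $n\in\mathbb{Z}$ we have $6n\equiv 0\pmod{3}$ and $4\equiv 1\pmod{3}$, so $M_\rho+6n-4\equiv 1+0-1\equiv 0\pmod{3}$; that is, $3\mid(M_\rho+6n-4)$. Reducing instead modulo $6$ yields the sharper $M_\rho+6n-4\equiv 1-4\equiv 3\pmod{6}$, so $M_\rho+6n-4$ is always an \emph{odd} multiple of $3$: it is never $0$ and never $\pm1$, and it equals $\pm3$ only for the two indices $n=(7-M_\rho)/6$ and $n=(1-M_\rho)/6$. Hence whenever $M_\rho+6n-4$ is a positive integer different from $3$, it is a multiple of $3$ exceeding $3$ and therefore composite, while the single value $M_\rho+6n-4=3$ is exactly the one removed by the hypothesis. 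The case $M_\rho=3$ (i.e.\ $p=2$) is genuinely exceptional — there $M_\rho+6n-4=6n-1$, which is prime for $n=1$ — so the statement is to be read for $M_\rho>3$, in keeping with the convention used earlier in the paper.

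The only delicate point — a matter of phrasing rather than a genuine obstacle — is the quantifier over all of $\mathbb{Z}$: for sufficiently negative $n$ the number $M_\rho+6n-4$ is a negative multiple of $3$, and ``composite'' is classically reserved for positive integers. I would handle this by restricting to the cofinite set of $n$ with $M_\rho+6n-4>3$, or, equivalently, by stating the conclusion as ``$M_\rho+6n-4$ is never prime unless it equals $3$'', which is precisely what the modulo-$3$ computation establishes.
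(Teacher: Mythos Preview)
Your argument is correct and is precisely the ``immediate'' check the paper has in mind: the paper omits this proof, but its intended route is to read off from the admissible ova-angular residues $\{31,127,247,271\}$ that $M_\rho\equiv 1\pmod{6}$, whence $M_\rho+6n-4\equiv 3\pmod{6}$ is an odd multiple of $3$, composite once it differs from $3$. Your handling of the boundary cases ($M_\rho=3$, and negative or small values of $M_\rho+6n-4$) is more careful than anything the paper records.
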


\begin{teor}
Let $M_\rho$ be a Mersenne's prime. If $M_\rho \pmod{360}= \daleth_{127}$ or $\daleth_{247}$, then $M_\rho -2$ is a composite number.
\end{teor}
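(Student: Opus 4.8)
The plan is to reduce the claim to a single congruence modulo $360$, exactly as in the companion result that $M_\rho+2$ is never prime for $M_\rho>3$. The starting point is the hypothesis itself: if $M_\rho\pmod{360}=\daleth_{127}$, i.e.\ $M_\rho\equiv 127\pmod{360}$, then $M_\rho-2\equiv 125\pmod{360}$; and if $M_\rho\pmod{360}=\daleth_{247}$, then $M_\rho-2\equiv 245\pmod{360}$. I would recall at this point that, by the classification invoked earlier (the only ova-angular residues available to a Mersenne prime are $\daleth_{31},\daleth_{127},\daleth_{247},\daleth_{271}$), the hypothesis singles out two of the four genuine cases.

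Next comes the only substantive observation: $125=5^{3}$ and $245=5\cdot 7^{2}$, so in both cases $5\mid M_\rho-2$. Equivalently, neither $125$ nor $245$ is coprime to $360$, hence neither lies in the complete residue set $C^{*}$ of $\mathbb{P}_{360}$, which already forces $M_\rho-2$ not to be prime. Since $M_\rho-2$ is a positive integer lying in the residue class $125$ or $245$ modulo $360$, it satisfies $M_\rho-2\geq 125$; being a multiple of $5$ that exceeds $5$, it is composite, which is the assertion.

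For orientation I would add that the hypothesis cannot be weakened to include $\daleth_{31}$ or $\daleth_{271}$: subtracting $2$ there produces $29$ and $269$, which are prime, and indeed $M_5-2=29$ exhibits the failure for $\daleth_{31}$. So the argument genuinely exploits that $127-2$ and $247-2$ happen to be multiples of $5$ while $31-2$ and $271-2$ are not. The main obstacle is essentially nonexistent once the hypothesis is written modulo $360$: the whole statement collapses to divisibility by $5$. The one point deserving a line of care is excluding the degenerate possibility $M_\rho-2=5$, which is immediate since the least positive integer in each relevant class is $125$ — consistently, the least Mersenne prime in these classes is $M_7=127$.
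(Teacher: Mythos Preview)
Your argument is correct and is exactly the ``immediate'' verification the paper has in mind: it omits the proof here, but the intended route---as in the companion result on $M_\rho+2$---is to read off the residue of $M_\rho-2$ modulo $360$, observe that $125$ and $245$ lie outside $C^{*}$ (both being multiples of $5$), and conclude compositeness since $M_\rho-2>5$. Your added remarks on why $\daleth_{31}$ and $\daleth_{271}$ are excluded and on ruling out $M_\rho-2=5$ are apt and complete the picture.
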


\begin{teor}
Let $M_\rho$ be a Mersenne's prime. If $M_\rho \pmod{360}= \daleth_{31}$ or $\daleth_{271}$, then $M_\rho +4$ is a composite number.
\end{teor}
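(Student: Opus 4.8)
The plan is to reduce everything to a single divisibility-by-$5$ observation, using the classification of Mersenne residues modulo $360$ already invoked for the Mersenne inverse series and for the Mersenne twin-prime statement: the only ova-angular residues attainable by a Mersenne prime $M_\rho>3$ are $\daleth_{31},\daleth_{127},\daleth_{247},\daleth_{271}$, i.e. $M_\rho \bmod 360 \in \{31,127,247,271\}$. Under the hypothesis we are in the case $M_\rho \equiv 31 \pmod{360}$ or $M_\rho \equiv 271 \pmod{360}$, so the argument becomes a finite computation.

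First I would add $4$ to the residue in each of the two cases. If $M_\rho \equiv 31 \pmod{360}$ then $M_\rho+4 \equiv 35 \pmod{360}$; if $M_\rho \equiv 271 \pmod{360}$ then $M_\rho+4 \equiv 275 \pmod{360}$. Since $360 = 2^{3}\cdot 3^{2}\cdot 5$ we have $5 \mid 360$, and both $35$ and $275$ are divisible by $5$; hence $5 \mid M_\rho+4$ in either case. Equivalently, $M_\rho \equiv 1 \pmod 5$ in both cases, so $M_\rho+4 \equiv 0 \pmod 5$; one can also see this directly from the fact that $2^\rho \bmod 5$ has period $4$, the residues $\daleth_{31}$ and $\daleth_{271}$ forcing $\rho \equiv 1 \pmod 4$ and therefore $M_\rho = 2^\rho-1 \equiv 1 \pmod 5$.

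It then remains only to exclude the degenerate possibility $M_\rho+4 = 5$. The two smallest Mersenne primes meeting the hypothesis are $M_5 = 31$ (residue $31$) and $M_{13} = 8191$ (residue $271$), so in every case $M_\rho \geq 31$ and therefore $M_\rho+4 \geq 35 > 5$. Thus $5$ is a proper divisor of $M_\rho+4$, and $M_\rho+4$ is composite, as claimed. I do not anticipate any real obstacle here; the only points requiring care are citing the four-residue classification correctly — so that "$\daleth_{31}$ or $\daleth_{271}$" is exactly the subfamily for which the residue of $M_\rho+4$ lands on a multiple of $5$ (for $\daleth_{127}$ and $\daleth_{247}$ one gets $M_\rho \equiv 2 \pmod 5$ and the conclusion fails) — and observing that $M_\rho+4$ strictly exceeds its witnessed factor $5$.
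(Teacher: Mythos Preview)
Your argument is correct and is exactly the ``immediate'' verification the paper has in mind (the paper omits the proof of this statement): from $M_\rho \equiv 31$ or $271 \pmod{360}$ one gets $M_\rho+4 \equiv 35$ or $275 \pmod{360}$, hence $5 \mid M_\rho+4$, and $M_\rho+4 \geq 35$ rules out $M_\rho+4=5$. The companion statement about $\daleth_{127}$, $\daleth_{247}$ and $M_\rho-2$ in the paper is the same $5$-divisibility trick, confirming this is the intended route.
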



\begin{defin}[Ova-angular matrix]

Let $0 < \Game_x $ be an integer, then the frequency function is defined 
\[f_{\daleth{\rho}}(\Game_x)=
\begin{cases}
1 & \text{if $\, \daleth_{\rho}+360\Game_x\,$ is a prime number},\\
0 & \text{any other case.}
\end{cases} \]
and its representative square matrix determined by $x_{i,j_k}=f(\Game_{k(i-1)+j}).$ 
\end{defin}

\textbf{Example:} The squeare matrix ($k=10$) of $100$ ova-angular rotations for $\rho_7$ and $\rho_{353}$ respectively is presented below:
\begin{center}
\begin{tabular}{c|c}
Ova-angular matrix $\rho_7.$ & Ova-angular matrix $\rho_{353}.$ \\
{$\begin{bmatrix}
1 & 1 & 1 & 1 & 0 & 0 & 0 & 1 & 0 & 1\\
1 & 1 & 0 & 0 & 1 & 0 & 0 & 0 & 0 & 1\\
0 & 1 & 1 & 1 & 1 & 0 & 0 & 0 & 0 & 0\\
0 & 1 & 1 & 0 & 0 & 1 & 1 & 1 & 0 & 1\\
1 & 0 & 0 & 0 & 0 & 1 & 1 & 0 & 0 & 0\\
1 & 0 & 1 & 1 & 0 & 0 & 0 & 1 & 1 & 0\\
0 & 0 & 0 & 0 & 0 & 1 & 0 & 0 & 1 & 0\\
0 & 0 & 0 & 1 & 0 & 1 & 0 & 1 & 1 & 1\\
1 & 1 & 0 & 0 & 0 & 0 & 1 & 1 & 0 & 0\\
0 & 0 & 1 & 0 & 0 & 0 & 0 & 0 & 0 & 1
\end{bmatrix}$}
& {$\begin{bmatrix}
0 & 0 & 1 & 0 & 1 & 0 & 0 & 0 & 1 & 0\\
0 & 1 & 0 & 1 & 0 & 1 & 1 & 1 & 1 & 0\\
0 & 1 & 0 & 0 & 0 & 0 & 0 & 1 & 0 & 0\\
0 & 0 & 0 & 0 & 1 & 1 & 0 & 1 & 0 & 1\\
0 & 1 & 0 & 1 & 1 & 0 & 0 & 0 & 0 & 1\\
1 & 1 & 1 & 1 & 0 & 0 & 1 & 0 & 0 & 0\\
0 & 0 & 0 & 0 & 1 & 1 & 1 & 0 & 0 & 0\\
1 & 0 & 1 & 1 & 0 & 0 & 0 & 1 & 1 & 1\\
0 & 1 & 0 & 1 & 0 & 0 & 0 & 0 & 0 & 0\\
1 & 0 & 0 & 0 & 0 & 1 & 0 & 0 & 1 & 1
\end{bmatrix}$} \\
\end{tabular}
\end{center}

It is clear that this matrix representation presents the prime numbers for $\rho_7$ and $\rho_{353}$ in an interval $0<\Game_{n}\leq100$ rotations, 
so for example for the right matrix, it happens that
in row $1$ and column $5$ we have  $f_{\daleth_{353}}(5)=1)$ with $\Game_{x}=5$, this indicates that the number $353+360(5)=2153$ is a prime number. The zeros indicate that a prime number is not formed in these rotations.\\

We can notice that this type of matrix representation for the prime numbers according to their ova-angular rotations overall presents a better way for its study and makes it an object for statistical analysis and linear algebra, in particular, 
the reader is invited to analyze the square matrices $M_{360 \times 360}$ rotations.


\section{Conclusions}
In this work, the ova-angular rotations of a prime number were characterized. Although the mathematics used is quite elementary, we can notice the usefulness of this theory based on geometric properties. Finally, some important applications were presented. It is possible to continue this work by analyzing other applications that this theory presents using the geometric properties for different prime numbers, the frequency function and the ova-angular matrices.

\bibliography{reference.bib}

\end{document}